\newtheorem{theorem}{Theorem}[section]
\newtheorem{lemma}[theorem]{Lemma}
\newtheorem{definition}[theorem]{Definition}
\newtheorem{proposition}[theorem]{Proposition}
\title{}
\newcommand{\sa}[1]{\ensuremath{\,{\buildrel #1 \over \longleftrightarrow}\,}}
\newcommand{\Z}{\mathbb{Z}}
\newcommand{\prob}{\mathbb{P}}
\newcommand{\cC}{\mathcal{C}}
\newcommand{\cD}{\mathcal{D}}
\newcommand{\lra}{\leftrightarrow}
\newcommand{\fC}{\mathfrak{C}}
\newcommand{\E}{\mathbb{E}}
\newcommand{\edges}{\mathsf{E}}
\newcommand{\cutp}{\beta}
\newcommand{\tE}{\widetilde{E}}
\newcommand\xleftrightarrow[2][]{%
  \ext@arrow 9999{\longleftrightarrowfill@}{#1}{#2}}
\newcommand\longleftrightarrowfill@{%
  \arrowfill@\leftarrow\relbar\rightarrow}
\begin{document}
    \title{Robust construction of the incipient infinite cluster in high dimensional critical percolation}
   \author{Shirshendu Chatterjee \thanks{Email: shirshendu@ccny.cuny.edu.} \\ \small{The City College of New York and CUNY Graduate Center}  \and Pranav Chinmay \thanks{Email: pchinmay@gradcenter.cuny.edu.} \\ \small{CUNY Graduate Center}  \and Jack Hanson \thanks{Email: jhanson@ccny.cuny.edu.}\\ \small{The City College of New York and CUNY Graduate Center} \and Philippe Sosoe \thanks{Email: ps934@cornell.edu.} \\ \small{Cornell University} }
	\maketitle

 \begin{abstract}
        We give a new construction of the incipient infinite cluster (IIC) associated with high-dimensional percolation  in a broad setting and under minimal assumptions.
Our arguments differ substantially from  earlier constructions of the IIC; we do not directly use   the machinery of the lace expansion or similar diagrammatic expansions.  
We  show that the IIC  may be constructed by conditioning on the cluster of a vertex being infinite in the supercritical regime $p > p_c$ and then taking $p \searrow p_c$. Furthermore, at criticality, we show that the IIC may be constructed by conditioning on a connection to an arbitrary distant set $V$, generalizing previous constructions where one conditions on a connection to a single distant vertex or the boundary of a large box.

The input to our proof are the asymptotics for the two-point function obtained by Hara, van der Hofstad, and Slade. Our construction thus applies in all dimensions for which those asymptotics are known, rather than an unspecified high dimension considered in previous works. The results in this paper will be instrumental in upcoming work related to structural properties and  scaling limits of various objects involving high-dimensional percolation clusters at and near criticality.
    \end{abstract}

	\section{Introduction}

 Bernoulli percolation is a fundamental model of mathematical statistical mechanics. Despite its apparent simplicity, it exhibits many of the qualitative features of more complex statistical mechanical models that undergo a continuous phase transition. Of particular interest is the behavior of the model in a neighborhood of the critical point, the value of the model parameter above which an infinite cluster appears on the lattice. It is widely expected that, at the critical point in Bernoulli percolation in $\mathbb{Z}^d$, there is no infinite cluster, a fact referred to as continuity of the phase transition.  Continuity has been proved in dimensions $d=2$ by T. E. Harris \cite{Har}  and H.~Kesten \cite{K0} and $d \ge 19$ by T. Hara and G. Slade \cite{HS};  the latter result was extended to the case $d\ge 11$ by R. Fitzner and R. van der Hofstad \cite{FH,FH2}. Obtaining a proof for continuity in intermediate dimensions remains a central open problem in percolation theory.

 We begin with a discussion of the main object of this work, the \emph{incipient infinite cluster}, as introduced in \cite{K}.  For this purpose, consider the integer lattice $\mathbb{Z}^d$ as a graph with edges between points differing by $\pm 1$ along one of the coordinates; we denote the corresponding edge set by $\edges(\Z^d)$. We let $\prob_p$ denote the infinite product of Bernoulli measures with parameter $p$ on $\{0,1\}^{\edges(\Z^d)}$. The critical density $p_c$ is defined by
\[p_c= \inf\{p: \prob_p(\text{the component of 0 is infinite})>0\},\]
so that $0<p_c<1$. By ``component", we mean the lattice points in $\mathbb{Z}^d$ connected to the origin $0:=(0,\ldots,0)$ in $\mathbb{Z}^d$ by a path of open edges (edges whose corresponding Bernoulli variable equals one). 

 At the critical point $p=p_c$, the mean cluster volume diverges, implying that any realization of the process contains large clusters. It is therefore natural to study typical realizations where the origin belongs to a large open cluster. Intuitively, this corresponds to conditioning on the origin being part of a cluster of increasing size and then taking the limit as the cluster size approaches infinity. Insights on the fractal properties of large critical clusters originating in the physics literature, such as the influential work of S. Alexander and R. Orbach on random walks and diffusion in critical percolation \cite{AO}, are most naturally formulated in mathematical terms by considering such a limit.
 
Such considerations inspired H. Kesten \cite{K} to give a precise meaning to the limit in the case of two dimensional percolation. Let $B(n)=\mathbb{Z}^2\cap[-n,n]^2$ be a box of side-length $2n+1$ centered at the origin, and let $\{0\leftrightarrow \partial B(n)\}$ be the event that the origin $(0,0)\in \mathbb{Z}^2$, (abbreviated here as $0$) is connected to the boundary $\partial B(n)=\{x\in \mathbb{Z}^2:\max\{|x_1|,|x_2|\}=n\}$ of $B(n)$ by a path of open edges. Kesten showed that the limit
 \begin{equation}
 \label{eqn: kesten-def}
 \nu(E)=\lim_{n\rightarrow\infty }\mathbb{P}(E\mid 0\leftrightarrow \partial B(n))
 \end{equation}
 exists for events $E$ depending only on the status of edges in a fixed box centered at the origin. Moreover, the measure $\nu$ coincides with another limit, where one varies the parameter $p$ instead:
 \begin{equation}\label{eqn: p-def}
 \nu(E)=\lim_{p\searrow p_c} \mathbb{P}_{p}(E \mid  0 \lra \infty).
 \end{equation}
Here, $\{0\leftrightarrow \infty\}$ is the event that the cluster of the origin $0$ is infinite. 
Under the measure $\nu$, the origin is contained in an infinite cluster with probability one. This cluster or, interchangeably, the measure $\nu$ itself is called an \emph{incipient infinite cluster} (IIC). The terminology is meant to suggest that large clusters with a structure locally similar to the IIC ``join together'' to form an infinite cluster once one passes the threshold $p_c$. 

\subsection{Main results}
Kesten's proof and subsequent results on the IIC in dimension $d=2$ such as \cite{J} and \cite{DS} rely on Russo-Seymour-Welsh (RSW) crossing estimates, as well as a certain renewal or domain Markov property for circuits in two dimensions. These tools are not available in general dimension, and so his proof does not at first appear to apply beyond the two-dimensional setting. The main contribution of this paper is to show how the cluster extension techniques developed in \cite{KN} and refined in \cite{CH,CHS} can replace the gluing techniques based on RSW and FKG inequalities in implementing the general scheme of Kesten's argument in the high-dimensional setting. 

We give a new, general construction of the incipient infinite cluster in critical, high-dimensional percolation. For the statement of our results and for the remainder of the paper, we broaden our choice of edge sets: $\edges(\Z^d)$ may be either the nearest-neighbor edge set \eqref{eq:nnset} as above, or the spread-out edge set defined below at \eqref{eq:spreadout}.  Our main result is the following:

		\begin{theorem}
			\label{thm:main}
            Let $\prob$ denote the product Bernoulli measure on $\{0,1\}^{\edges(\mathbb{Z}^d)}$, with common parameter $p=p_c$, the critical density. Let also $d>6$ be such that the estimates \eqref{2pt} hold. (For example, $d\ge11$ suffices \cite{FH, FH2}.)

			Suppose that, for each $n$, the sets $V_n, D_n$ are subsets of $\Z^d$ satisfying $(V_n \cup D_n) \cap B(n) = \varnothing$ and such that $0$ and some vertex of $V_n$ are in the same connected component of $\Z^d \setminus D_n$. Then we have
			\[\lim_{n \to \infty} \prob( \cdot \mid 0 \sa{\Z^d \setminus D_n} V_n )  =: \nu(\cdot) \]
			exists, where the limit is in the sense of weak convergence.
			\end{theorem}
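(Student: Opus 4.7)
My plan is to carry out the Kesten-type ratio program (cf.~\eqref{eqn: kesten-def}, \eqref{eqn: p-def}) in high dimensions, with the two-dimensional RSW input replaced by (a)~the Hara--van der Hofstad--Slade two-point asymptotics \eqref{2pt}, controlling how hitting probabilities depend on the starting vertex, and (b)~the cluster-extension estimates of \cite{KN,CH,CHS}, comparing multi-source connection probabilities to single-source ones. Since cylinder events generate the weak topology on $\{0,1\}^{\edges(\Z^d)}$, it suffices to prove convergence of $\prob(E \mid 0 \sa{\Z^d\setminus D_n} V_n)$ for every event $E$ depending only on the edges inside some box $B(m)$. Fix such an $E$ and take $n$ so large that $B(m) \cap (V_n \cup D_n) = \varnothing$.

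\textbf{Decomposition and key estimate.} I reveal the configuration $\omega|_{B(m)}$ on edges inside $B(m)$ together with the open cluster $\mathcal{A}$ of $0$ in this restricted subgraph. Because any realization of $\{0 \sa{\Z^d\setminus D_n} V_n\}$ must exit $B(m)$ through a vertex of $\mathcal{A}\cap\partial B(m)$, independence of the edges not entirely contained in $B(m)$ yields
\[\prob\bigl(E,\ 0 \sa{\Z^d\setminus D_n} V_n\bigr) = \E\bigl[\mathbf{1}_E \, g_n(\omega|_{B(m)})\bigr],\]
where $g_n(\omega|_{B(m)})$ is the conditional probability that the exterior completion of $\mathcal{A}$ reaches $V_n$ without visiting $D_n$. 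The heart of the argument is to show that, uniformly in $\omega|_{B(m)}$,
\[g_n(\omega|_{B(m)}) = (1+o(1))\, T(\omega|_{B(m)})\, \prob\bigl(0 \sa{\Z^d\setminus D_n} V_n\bigr)\qquad (n \to \infty),\]
for some $n$-independent integrable functional $T$ (morally, a weighted count of escape routes from $\mathcal{A}$). The upper bound is a union bound combined with the two-point ratio $\prob(x \sa{} V_n)/\prob(0\sa{}V_n) \to 1$ for $x \in B(m)$, which follows from $\tau(x,y)\sim C|x-y|^{-(d-2)}$ since $V_n$ sits outside $B(n)$ with $n \gg m$. The lower bound requires that the events $\{x \sa{} V_n\}$ for distinct $x \in \mathcal{A}$ overlap negligibly at the scale of their individual probabilities; this is the content of the triangle-diagram bounds underlying the cluster-extension arguments of \cite{KN,CH,CHS}.

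\textbf{Conclusion and principal obstacle.} Once this key estimate is in hand, dividing gives
\[\prob(E \mid 0 \sa{\Z^d\setminus D_n} V_n) = \frac{\E[\mathbf{1}_E\, g_n]}{\E[g_n]} \longrightarrow \frac{\E[\mathbf{1}_E\, T]}{\E[T]} =: \nu(E),\]
defining the limit on $B(m)$-cylinder events; a standard Kolmogorov-extension/monotone-class argument then upgrades this to weak convergence of measures on $\{0,1\}^{\edges(\Z^d)}$, consistency across $m$ being automatic since the limiting $\prob(\cdot)$-values must agree. The main obstacle I anticipate is the lower bound in the key ratio estimate, specifically obtaining the required uniformity over the shape of $V_n$ and the position of the obstacle $D_n$ rather than merely their distance from $B(m)$. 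One must argue that the triangle corrections remain controlled when the process is forbidden from visiting $D_n$, and that the two-point ratio approaches $1$ uniformly in $v \in V_n$. These refinements, beyond the point-target or box-boundary cases, form the technical core of the proof and are where the machinery of \cite{KN,CH,CHS} must be carefully deployed.
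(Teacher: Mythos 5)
Your proposal takes a genuinely different route, but the key step has a gap that the tools you invoke cannot fill. You aim to establish
\[g_n(\omega|_{B(m)}) = (1+o(1))\, T(\omega|_{B(m)})\, \prob\bigl(0 \lra V_n\bigr)\]
with an $n$-independent functional $T$, treating this as a refinement to be supplied by union bounds and triangle-diagram estimates. But those estimates yield only two-sided comparisons \emph{of the same order}, not convergence of ratios. The existence of a limit for the normalized one-arm probability $n^2\,\prob(0 \lra \partial B(n))$ is in fact an open problem in high dimensions (the paper explicitly notes that Heydenreich--van der Hofstad--Hulshof must assume it), and likewise $\prob(x \lra V_n)/\prob(0 \lra V_n)$ is only known to lie in a compact interval bounded away from $0$ and $\infty$, not to tend to $1$. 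Your claim that it tends to $1$ implicitly relies on sharp two-point asymptotics with a limiting constant \emph{and} a transfer from two-point functions to arm-to-a-set probabilities; neither is available under the paper's hypothesis, which is only the two-sided bound \eqref{2pt}. So what you call $T$ is really a sequence $T_n = g_n/\prob(0 \lra V_n)$ with no mechanism forcing it to converge, and the ``main obstacle'' you flag at the end is not a uniformity refinement --- it \emph{is} the theorem.

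The paper closes exactly this gap with Hopf's ratio-limit theorem for positive kernels (Lemma~\ref{lem:hopflem}). It decomposes the arm event as an approximate matrix product of transition kernels between good spanning clusters in successive annuli (Lemmas~\ref{lem:goodlikely} and~\ref{lem:matprod}), shows those kernels satisfy the uniform comparability hypothesis \eqref{eq:kappasquare} via the gluing estimates of Lemma~\ref{lem:glue}, and then uses the exponential contraction of oscillation under such kernels to show that the ratio of propagators from two initial good clusters converges, uniformly in the far endpoint and hence independently of $(V_n,D_n)$. This is what converts order-of-magnitude control into genuine convergence of ratios without any limit for the individual arm probabilities. Your single-step factorization omits this mechanism, and I do not see how to complete the argument without importing something equivalent to the multi-scale Markov-chain structure and the Hopf contraction.
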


        The estimates \eqref{2pt} are known to hold for the nearest-neighbor model, where they are due to Hara \cite{H} when  $d\ge 11$, and for $d>6$ in the spread-out case, where they were proved by Hara-van der Hofstad-Slade \cite{HHS}. They are expected to hold for $d>6$, and, consequently, so is Theorem \ref{thm:main}.

		We note that the measure $\nu$ appearing in Theorem~\ref{thm:main} must be independent of the choice of $V_n$ and $D_n$. To see this, given two pairs of sequences $(V_n), (D_n)$ and $(V_n'), (D_n')$, interleave them by setting $V_n'' = V_n$ for $n$ odd and $V_n'' = V_n'$ for $n$ even, with a similar definition for $D_n''$. Applying the theorem to the sequences $(V_n''), (D_n'')$ shows the claimed equality of limits.
		In particular, taking $V_n = n e_1$ and $D_n = \varnothing$, we see that $\nu$ is the IIC measure constructed by van der Hofstad and J\'arai in \cite{HJ}. 

        As explained above, in his original paper \cite{K}, Kesten gave an alternate construction of the incipient infinite cluster starting from supercriticality. Our other main result extends this construction to the high-dimensional setting: 
        \begin{theorem}
            \label{thm:main2}
            Let $d>6$ be such that the estimates \eqref{2pt} hold, and let $\{0\leftrightarrow\infty\}$ denote the event that the cluster of the origin in $\mathbb{Z}^d$ is unbounded. 
            Let $\nu$ be the IIC measure from the statement of Theorem~\ref{thm:main}. We have
            \[\lim_{p \searrow p_c} \prob_p(\cdot \mid 0 \lra \infty) = \nu(\cdot),\]
            where the limit is in the weak sense.
        \end{theorem}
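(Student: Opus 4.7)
The plan is to reduce to cylinder events and compare, via a common ``boundary weighting'' of in-box configurations, the supercritical conditioning with the critical conditioning already given by Theorem~\ref{thm:main}. Fix a cylinder event $E$ depending only on edges inside a box $B(R)$. Conditionally on the restriction $\omega|_{B(R)}$, the event $\{0\lra\infty\}$ is determined by the set $F(\omega|_{B(R)}) := \{x \in \partial B(R) : x \sa{B(R)} 0\}$ of boundary vertices connected to $0$ inside $B(R)$: it requires $F$ to be nonempty and connected to $\infty$ using only edges with both endpoints outside $B(R)$. Writing $\prob_p^{\mathrm{ext}}$ for the Bernoulli($p$) law on those external edges and summing over the finitely many in-box configurations $\eta$, we get
\[
\prob_p(E \mid 0 \lra \infty) \;=\; \sum_{\eta} \mathbbm{1}_E(\eta)\,\prob_p(\omega|_{B(R)} = \eta)\, w_p(F(\eta)),\qquad w_p(F) := \frac{\prob_p^{\mathrm{ext}}(F \lra \infty)}{\theta(p)}.
\]
Applying Theorem~\ref{thm:main} with $V_n = \{v_n\}$ along any $|v_n|\to\infty$ and $D_n = \varnothing$, a parallel decomposition gives
\[
\nu(E) \;=\; \lim_{n\to\infty}\sum_{\eta}\mathbbm{1}_E(\eta)\,\prob_{p_c}(\omega|_{B(R)} = \eta)\, \tilde w_n(F(\eta)),\qquad \tilde w_n(F) := \frac{\prob_{p_c}^{\mathrm{ext}}(F \lra v_n)}{\prob_{p_c}(0 \lra v_n)}.
\]

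Since $\prob_p(\omega|_{B(R)}=\eta)\to\prob_{p_c}(\omega|_{B(R)}=\eta)$ as $p\searrow p_c$ for each of the finitely many $\eta$, the theorem reduces to the following key claim: for every nonempty $F\subseteq\partial B(R)$ that arises from some $\eta$, the limit $w(F):=\lim_{p\searrow p_c} w_p(F)$ exists and equals $\tilde w(F):=\lim_{n\to\infty}\tilde w_n(F)$, where the existence of $\tilde w(F)$ is an immediate consequence of Theorem~\ref{thm:main} applied to the indicator event $\{\omega|_{B(R)}=\eta\}$. Heuristically, both quantities measure the ``effective size'' of $F$ as seen from infinity in an IIC-like cluster. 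For a singleton $F=\{v\}$ with $v\in\partial B(R)$, both limits equal $1$: $w_p(\{v\})\to 1$ follows from continuity of $\theta$ at $p_c$ together with uniqueness of the infinite cluster in high dimensions (the contribution from connections of $v$ to $\infty$ forced to revisit $B(R)$ is negligible relative to $\theta(p)$), and $\tilde w_n(\{v\})\to 1$ follows from the Hara--van der Hofstad--Slade asymptotic $\tau_{p_c}(y)\sim C/|y|^{d-2}$ from~\eqref{2pt}.

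The main obstacle will be proving the agreement $w(F)=\tilde w(F)$ for general $F$. The strategy is to expand both reaching probabilities via BK and tree-graph inclusion--exclusion: one writes $\prob^{\mathrm{ext}}(F\lra\mathrm{target})=\sum_{v\in F}\prob^{\mathrm{ext}}(v\lra\mathrm{target})-(\mathrm{overlap})$, with the overlap counting pairs of vertices in $F$ whose connections to the target merge. The high-dimensional triangle condition, together with the cluster-extension estimates of \cite{KN,CH,CHS} that furnish $L^2$-type control on disjoint connections, should show that the overlap is asymptotically proportional to the leading term in each of the two limits, so that $w(F)$ and $\tilde w(F)$ can be identified with the same explicit functional of $F$ and the critical two-point function. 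Once this agreement is established, passing to the limit term-by-term in the finite sums above yields $\prob_p(E\mid 0\lra\infty)\to\nu(E)$ for every cylinder event $E$, hence weak convergence.
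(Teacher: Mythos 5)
Your proposal takes a genuinely different route from the paper, but it contains a gap at its very first step and a second, more serious gap at its core. On the first: the event $\{0\lra\infty\}$, conditionally on $\omega\restriction_{B(R)}=\eta$, is \emph{not} determined by the boundary profile $F(\eta)=\{x\in\partial B(R):x\sa{B(R)}0\}$ together with the external edges. An open path from $0$ to infinity can exit $B(R)$, return through a boundary vertex not in $F(\eta)$, traverse a component of $\eta$ disjoint from $\fC_\eta(0)$, and exit again. Thus your identity $\prob_p(E\mid 0\lra\infty)=\sum_\eta \mathbbm{1}_E(\eta)\prob_p(\omega\restriction_{B(R)}=\eta)\,w_p(F(\eta))$ is false as stated: the correct dependence is on the full connectivity partition of $\partial B(R)$ induced by $\eta$, not merely on $F(\eta)$, and the exit–re-entry corrections are not obviously negligible after dividing by $\theta(p)$, which vanishes as $p\searrow p_c$.

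More fundamentally, even if the decomposition is repaired, the ``key claim'' that $\lim_{p\searrow p_c}w_p(F)=\lim_{n\to\infty}\tilde w_n(F)$ is not a reduction of Theorem~\ref{thm:main2} — it \emph{is} Theorem~\ref{thm:main2}, applied to the complete cylinder events $\{\omega\restriction_{B(R)}=\eta\}$. The hard content of the theorem is entirely contained in this claim, and the argument you sketch for it (BK inequality, tree-graph inclusion–exclusion, control of overlap diagrams, and an identification with ``the same explicit functional of $F$ and the critical two-point function'') has no worked-out mechanism. In particular, $w_p(F)$ involves the ratio of two supercritical quantities tending to zero — $\prob_p^{\mathrm{ext}}(F\lra\infty)$ and $\theta(p)$ — and relating these to critical two-point function asymptotics is precisely the kind of diagrammatic, lace-expansion-adjacent analysis the paper is expressly designed to avoid (and would need, for instance, sharp asymptotics for $\theta(p)$ as $p\searrow p_c$, which are not among the paper's inputs). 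Even your base case, $w_p(\{v\})\to1$, is not ``continuity of $\theta$ plus uniqueness'': it requires showing that the connection from $v$ to $\infty$ typically avoids revisiting the fixed box $B(R)$, uniformly as $p\searrow p_c$, which is a genuine estimate.

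For contrast, the paper's mechanism is a multi-scale matrix-product decomposition (Lemmas~\ref{lem:matprod}, \ref{lem:givenhopf}) valid uniformly in $p\geq p_c$ up to the cutoff scale $\beta(p)$ of \eqref{eq:maxscale2}, where $\prob_p$ and $\prob_{p_c}$ agree within a factor of two on cylinder events. It applies this decomposition with $V_n=\partial B(n)$, $D_n=\varnothing$, sends $n\to\infty$ to recover $\prob_p(E\mid 0\lra\infty)$, and only then sends $p\searrow p_c$ (using continuity of the finitely many matrix entries $M_0^p$ in $p$), with Hopf's contraction lemma (Lemma~\ref{lem:hopflem}) supplying the existence of the limit — avoiding any need for explicit asymptotics of $\theta(p)$ or for identifying the limit as an explicit functional.
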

        Both theorems are proved using the same basic building blocks. We prove many of our lemmas in the general setting of $p \geq p_c$, with an eye toward using them in the proofs of both Theorem~\ref{thm:main} and Theorem~\ref{thm:main2}.

\subsection{Previous results and motivation}

Kesten's work \cite{K}, as well as his study of the random walk \cite{K1} on the incipient infinite cluster, identified the IIC as an object of central importance in the study of critical percolation. Here we give a necessarily partial and limited survey of the subsequent literature. 

M. Damron and A. Sapozhnikov \cite{DS} have adapted Kesten's argument to constructing versions of the two-dimensional IIC where one conditions on the existence of multiple arms with prescribed status (open or closed) extending from the origin to infinity. See also the work by D. Basu and A. Sapozhnikov \cite{BS} for a general version of Kesten's argument that applies to percolation on graphs where the infinite cluster is unique and whose crossing probabilities satisfy a suitable quasimultiplicativity property.

Alternate constructions of an infinite cluster mimicking the asymptotic behavior of large critical percolation clusters in two dimensions were proposed by M. Aizenman \cite{A} and C. Borgs, J. Chayes, H. Kesten and J. Spencer \cite{BCKS}. We briefly review them here.

A spanning cluster of the box $B_d(n)=\mathbb{Z}^d\cap [-n,n]^d$ in $d$-dimensional percolation is an open cluster that intersects both the left side $\{-n\}\times [-n,n]^{d-1}$ and the right side $\{n\}\times[-n,n]^{d-1}$
of $B_d(n)$. Aizenman proposed (in general dimension) to construct the IIC at $p_c$ by selecting a site $I_n$ uniformly at random from all spanning clusters in $B_d(n)$ and recentering around it before taking the large $n$ limit. In \cite{BCKS}, the authors introduce a different procedure, choosing instead a vertex uniformly from the $k$th largest cluster of $B_d(n)$, with $k$ fixed, and recentering around this vertex. A.~A.~J\'arai \cite{A} showed in dimension $d=2$ that both of these procedures lead to the same limit as Kesten's IIC \eqref{eqn: kesten-def}.
 
Yet another construction of an IIC in two dimensions was given by J. Chayes, L. Chayes and R. Durrett in \cite{CCD}. These authors introduced an inhomogenous independent percolation model where the percolation density decays to $p_c$ as the distance from the origin increases: an edge $e\in \edges(\Z^2)$ is open with probability
\[p(e)=p_c+q(|e|),\]
where $q(t)\rightarrow 0$ as $t\rightarrow \infty$. It is shown in \cite{CCD} that if $q(t)$ decays sufficiently slowly, there is an infinite cluster at the origin with positive probability. This inhomogeneous model, with $q(e)=(1+|e|)^{-\lambda}$ and $\lambda=1/\nu$, where $\nu$ is the correlation length exponent and one conditions that $0$ be in an infinite cluster was proposed in \cite{CCD} as an interpretation of the IIC. For inhomogenous models with general $q$ decreasing to zero, A.~A.~J\'arai showed that choosing a vertex in the cluster of the origin uniformly in a box of size $n$ of the inhomogeneous percolation and re-centering the cluster around this vertex leads to the IIC in the $n\rightarrow \infty$ limit.

In high dimensions, T. Hara and G. Slade \cite{HS,HS2} studied the incipient infinite cluster by interpreting the informal description given above differently. They consider the cluster of the origin conditioned to have cardinality $n$. The estimates for the two-point function in \cite{HS-2pt}, previously obtained by the same authors through the lace expansion, imply that the typical radius of the cluster under this conditioning is $n^{1/4}$. Hara and Slade derive the limits of the conditioned two- and three- point functions with spatial rescaling by $n^{-1/4}$ and show that these coincide with those of a continuum process known as the integrated super-Brownian excursion. This provides a strong indication of the existence of a scaling limit for the IIC.

 Later, van der Hofstad and J\'arai \cite{HJ} studied a limit more closely resembling \eqref{eqn: kesten-def} in the high-dimensional case. They used lace expansion techniques to show that the limit
 \begin{equation}\label{eqn: HJ}
 \lim_{|x|\rightarrow \infty} \mathbb{P}(E\mid 0\leftrightarrow x)
 \end{equation}
 exists for spread-out percolation models in dimensions $d>6$, or in sufficiently high dimensions for nearest-neighbor percolation. The dimensions to which their result applies in the nearest-neighbor case are not specified. Van der Hofstad and J\'arai's approach was revisited and significantly extended by M. Heydenreich, van der Hofstad and T. Hulshof \cite{HHH}, who use a refined lace expansion analysis to show that the limit \eqref{eqn: HJ} exists under the so-called triangle condition. Their arguments apply both to finite-range percolation models in sufficiently high dimensions and some long-range models, where their results are conditional on the existence of the one-arm exponent, i.e. the analog of \eqref{KN} in the corresponding model. They also prove that there is a subsequence $n_k$ such that the limit \eqref{eqn: kesten-def} exists and coincides with \eqref{eqn: HJ} for every cylinder event $E$. Under the unproven assumption that the one-arm probability \eqref{KN}, scaled by the appropriate power of $n$ (power 2 for finite-range Bernoulli percolation, as in our case), has a limit, they show that \eqref{eqn: HJ} exists along the integers. As a consequence of our main result, we show unconditionally that \eqref{eqn: kesten-def} and \eqref{eqn: HJ} exist whenever \eqref{2pt} holds above $d=6$.

 In this context, we mention recent innovations  of H. Duminil-Copin and R. Panis \cite{DCP}, who derive upper bounds, uniform in the subcritical region up to the critical point, for the two-point function in spread-out percolation for dimensions $d$ greater than 6. These bounds imply in particular the finiteness of Aizenman and C. M. Newman's famous triangle diagram, a hallmark of mean field behavior for this model.

        The main result, Theorem \ref{thm:main} differs from previous results not just in the strength of the conclusion, but also in its proof. We do not directly rely on lace expansion techniques as in \cite{HJ}. Instead, we take the estimates \eqref{2pt} as input and implement Kesten's Markov-Chain relaxation argument from \cite{K} using only gluing techniques as in \cite{KN,CH,CHS}.
        
        \paragraph{Future and ongoing work.} As already mentioned, the present work is motivated by a desire to complete the picture of Heydenreich-van der Hofstad-Hulshof by placing the high-dimensional and two-dimensional IIC constructions on a more equal footing.   A further reason for our interest in revisiting the construction of the incipient infinite cluster in high dimensions in the general setting of Theorem
        \ref{thm:main} stems from our forthcoming work on the scaling of the chemical distance. We have previously proved \cite{CHS} that in critical percolation in high dimensions (defined by the validity of the mean field bounds \eqref{2pt}) the distance from the origin to the point $(n,0,\ldots,0)$, conditioned on the existence of a connection, is tight on the scale $n^2$, with precise control of the tail behavior. A natural next step is then to study the asymptotic distribution of the rescaled distance, a task we undertake in an upcoming paper. The generality of Theorem \ref{thm:main} plays a key role in the resulting moment computation.

        \subsection{Outline of the proof}
        Kesten's idea in \cite{K} is to approximate the probability 
        \begin{equation}\label{eqn:PE}
        \mathbb{P}(E,0\leftrightarrow \partial B(n))
        \end{equation}
        by a product of the form
        \begin{equation} \label{eqn: decomp}
        \sum_{C_1,\ldots,C_{K+1}} M_1(C_1,C_2)\cdots M_K(C_K,C_{K+1})\gamma(C_{K+1},n),
        \end{equation}
        where $C_i$ represent possible realizations of open circuits in a specific sequence of successive annuli $A_{i}$ lying between $0$ and $\partial B(n)$, and  each factor $M_i(C_i,C_{i+1})$ represents the probability that $C_{i+1}$ is the innermost open circuit in $A_{i+1}$ and there is an open connection from the $C_i$ to $C_{i+1}$ in the region between those two circuits. Kesten then derives the existence of the limit \eqref{eqn: kesten-def} from a ratio-limit theorem for (inhomogeneous) Markov Chains due to Hopf \cite{H}. 
        The argument just outlined, adapted to our setting, is recapitulated in Section  \ref{sec: MC-kesten}. Here we do not deviate much from \cite{K}.

        Significant technical difficulties arise in the justification of the approximation \eqref{eqn: decomp} in higher dimensions. In the two-dimensional setting \cite{K}, this decomposition is a relatively straightforward consequence of the existence of open circuits in annuli on sufficiently many scales. This is guaranteed by Russo-Seymour-Welsh estimates. The existence of open circuits and planarity in turn imply the existence of interfaces along which one can decouple the probabilities of connection events inside and outside, leading to the form \eqref{eqn: decomp}.

        In the high-dimensional setting, we replace circuits with certain (unique) spanning clusters of annuli. The clusters in question have to be carefully defined to ensure their uniqueness, high probability of occurrence, and good connectivity properties.  We defined the clusters in such a way that they must be crossed by any connection from the origin to a large distance. We also demand of the spanning clusters that they have certain good properties (see Definition \ref{defin:Kgood}), including that the volumes of the inner and outer boundary vertices of the cluster be typical and that most of these vertices be  regular in the sense of \cite{KN}. The last condition ensures the possibility of extending clusters.
     We use it in Section \ref{sec: structural} to derive connection probability estimates between good clusters, as well as between clusters and distant points. The key result here is the gluing Lemma \ref{lem:glue}. This lemma plays a key role in verifying the hypothesis \eqref{eq:kappasquare} in Lemma \ref{lem:hopflem}, Hopf's Markov chain convergence estimate.
     
        The rest of Section \ref{sec: structural} is devoted to proving that with high probability, a connection from the origin to a large distance which passes through a good cluster on a given scale $k_i$ extends to scale $k_{i+1}$ by passing through another good cluster, without "backtracking". (See Definition \ref{defin:verygood2} and Lemma \ref{lem:goodlikely} for precise definitions.) This decomposition of the connection from the origin to a large distance into a string of connections between good spanning clusters is the main input to Lemma \ref{lem:matprod}, the approximation of the event \eqref{eqn:PE} by a matrix product to which Hopf's Markov convergence result may be applied.
        
		\section{Preliminaries and definitions}
    We set the stage for the remainder of the paper. In Section~\ref{sec:defs}, we introduce some basic notation to which the remainder of the paper will refer, as well as some existing results which we will use in our estimates. We then in Section~\ref{sec:fixing} fix a particular event whose probability will be shown to converge. This will allow us to define a particular base length scale which will be the basis for our constructions throughout the paper. Finally, in Section~\ref{sec:regu}, we introduce a notion of regularity originating from \cite{KN}, and we describe how it will be useful in the following arguments.

        \subsection{Notation and useful results \label{sec:defs}}
        We consider the lattice with vertex set $\Z^d$ and either the nearest-neighbor edge set
        \begin{equation} \label{eq:nnset} \{ \{x, y\}: \, \|x - y\|_1 = 1\} \end{equation}
        or the \emph{spread-out} edge set with parameter $\Lambda \geq 1$:
        \begin{equation}\label{eq:spreadout}
            \{ \{x, y\}: \, 0 < \|x - y\|_\infty \leq \Lambda  \}\ . 
        \end{equation}
        In either case, we write $\edges(\Z^d)$ for the edge set under consideration. We sometimes identify the graph $(\Z^d, \edges(\Z^d))$ with its vertex set $\Z^d$ when this will not cause confusion, and we occasionally write $\edges(A)$ for the set of edges of $\Z^d$ with both endpoints in a vertex set $A$. We write $\Omega = \{0, 1\}^{\edges(\Z^d)}$ for the canonical probability space for the model, endowed with its Borel sigma-algebra. A typical element of $\Omega$ will be denoted by $\omega$. As a convention, we take $e \in \edges(\Z^d)$ to be open if and only if $\omega_e = 1$. We write $\prob_p$ for the probability measure under which the variables $(\omega_e)$ are i.i.d.~Bernoulli$(p)$ random variables. The \emph{open subgraph} is the random subgraph of $(\Z^d, \edges(\Z^d))$ induced by the edges which are open in $\omega$. If $A$ is a subgraph of $\Z^d$, the open subgraph of $A$ is defined analogously.

         The symbol $|\cdot|$ denotes the Euclidean norm, and other $p$-norms on $\Z^d$ are written using the symbol $\|\cdot\|_p$. We often sum powers of Euclidean norms, and to avoid notational complications in such sums, we introduce the convention that $|0|^{a} = 1$ for any negative power $a$.

		It is useful to introduce notation for sub-boxes of $\Z^d$, writing $B(x; r)$ for the set $x + [-r, r]^d$. An \emph{annulus} $A$ is a set of the form
        \begin{equation}\label{eq:anndef}
                    A = B(x; s) \setminus B(x; r) \ .
        \end{equation}
        for $s > r$. We also define a singleton $\{x\}$ for $x \in \Z^d$ to be a degenerate special instance of an annulus  with $s = 0$ and $r = -1$. We introduce notation for the inner and outer boundaries of an annulus: if $A$ is the annulus in \eqref{eq:anndef}, we set
        \begin{equation}
         \label{eq:annbdy}
        \begin{split}
            \partial_{in} A &:= \{y \in A: \, \{y, z\} \in \mathcal{E} \text{ for some $z \in B(x;r)$} \}\ , \quad \text{and} \\
		  \partial_{out} A &:= \{y \in A: \, \{y, z\} \in \mathcal{E} \text{ for some $z \notin B(x;s)$} \}\ .
          \end{split}
          \end{equation}
        
        We write $\fC(x)$ for the open cluster of a vertex $x \in \Z^d$ --- in other words, the component of the open subgraph containing $x$. We will sometimes consider this as a subgraph with the associated edges, and we will sometimes consider it as a vertex set; the precise meaning will be clear from context.   
        The symbol $p_c$, as usual, denotes the critical probability for the model: the element of $(0, 1)$ such that, $\prob_p(\exists \text{ an infinite open cluster})$ is $1$ for $p > p_c$ and $0$ for $p < p_c$.

        A useful notion will be that of \emph{restricted} connections. Given a subgraph $A$ (or vertex set $A$, which we identify here with the subgraph of $(\Z^d, \edges(\Z^d))$ it induces), we let $\fC_A(x)$ denote the component of the open subgraph of $A$ containing $x$. We write $x \sa{A} y$ to mean that $y \in \fC_A(x)$.
		
		Some crucial results we will use are the following. First, we have the Hara-van der Hofstad-Slade bounds for the two-point function. This powerful result, proved in \cite{H,HHS}, says that there exist constants $C > c > 0$ so that for all $x \in \mathbb{Z}^d$,
		\begin{equation}
		c|x|^{2-d} \leq \mathbb{P}_{p_c}(0 \leftrightarrow x) \leq C |x|^{2-d},
		\label{2pt}
		\end{equation}
        in nearest-neighbor Bernoulli percolation when $d\ge 11$, and in sufficiently spread-out models when $d>6$. 
        Other results we quote about high-dimensional percolation (e.g.~\eqref{KN} below) all hold for any $d > 6$ and either choice of $\edges(\Z^d)$ under the assumption that \eqref{2pt} is valid. Thus \eqref{2pt} is the main input our arguments will require.
    
        The result \eqref{2pt} was extended in \cite{CH} to restricted connections for vertices far from the boundary: for all $\epsilon \in (0, 1)$, there exists a constant $c = c(\epsilon) > 0$ such that the following holds:
        \begin{equation}
            \label{eq:restrtwopt}
            \begin{gathered}
                \text{for all $n$, and for all $x \in B((1-\epsilon) n)$, we have}\\
                c|x|^{2-d} \leq \mathbb{P}_{p_c}(0 \sa{B(n)} x)\ .
            \end{gathered}
        \end{equation}
        (The analogous upper bound to the lower-bound \eqref{eq:restrtwopt} follows trivially from \eqref{2pt}.)
		
		We will also use Kozma-Nachmias' result on the arm exponent, obtained in \cite{KN}. There exist constants $C > c > 0$ so that for all $n \geq 1$, 
		\begin{equation}
		cn^{-2} \leq \mathbb{P}_{p_c}(0 \leftrightarrow \partial B(n)) \leq Cn^{-2}.
		\label{KN}
		\end{equation}
        A final basic power law estimate we use comes from \cite{CH}. It controls the probability of a restricted connection to a vertex on the boundary of a box: there exists a $C > 0$ such that 
        \begin{equation}
            \label{eq:hstwopt}
            \text{for all $n$ and all $x$ with $\|x\| = n$,} \quad \prob(0 \sa{B(n)} x) \leq C n^{1-d}\ .
        \end{equation}

	
		
	    Given an annulus $A$ and a subgraph $F$ of $A$ (considered as an induced subgraph of $(\Z^d, \edges(\Z^d))$, we say $F$ is a \emph{spanning set} of $A$ if $F$ contains vertices of both $\partial_{in} A$ and $\partial_{out} A$. We extend this notion to vertex sets in the natural way.  A \emph{spanning cluster} is a component of the open subgraph of $A$ that is also a spanning set. We note that if $F$ is a subgraph of $A$, the event $\{F \text{ is a spanning cluster}\}$ is measurable with respect to the edges of $A$ which have an endpoint in the vertex set of $F$ --- in particular, we need not examine edges having an endpoint outside of $A$.

        For spanning sets $F$ of an annulus $A$, we introduce the notation $\partial_{in}^A F$ (resp.~$\partial_{out}^A F$) for the set of vertices in $F$ which lie in $\partial_{in} A$ (resp.~$\partial_{out} A$). This notion depends on the choice of ambient annulus $A$, but we write $\partial_{in}$ instead of $\partial_{in}^A$ when the annulus is clear in context, and take a similar convention for $\partial_{out}$.

        We will also have use of the following lemma controlling the probability an open cluster of a small region connects to the boundary of a larger region. This may be viewed as a slight rephrasing of \cite[Lemma 3.2]{CH}, and appears nearly verbatim in the second-to-last inequality of the proof of that lemma. 
        \begin{lemma}
            \label{lem:nofurther}
            Let $A_0 \subseteq A_1$ be two finite subgraphs of $\Z^d$, and suppose $\cC$ is a connected subgraph of $A_0$.  We let $\partial_{A_1} \cC$ be the set of vertices $v \in A_1 \setminus A_0$ such that $\{v, w\} \in \edges(\Z^d)$ for some vertex $w \in \cC$.  Then for each vertex set $B \subseteq A_1$, we have
            \[\prob_p\left(\cC \sa{A_1} B \mid \cC \text{ is an open cluster in the graph $A_0$}\right) \leq \sum_{w \in \partial_{A_1} \cC} \prob_p(w \sa{A_1 \setminus \cC} B)\ . \]
        \end{lemma}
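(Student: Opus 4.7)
The plan is to reduce the bound to a deterministic inclusion and then exploit independence between two disjoint sets of edges. The key observation is that on the conditioning event $\{\cC \text{ is an open cluster in } A_0\}$, every edge from $\cC$ to $A_0 \setminus \cC$ is forced to be closed, so $\cC$ is ``sealed off'' inside $A_0$; consequently any open path that leaves $\cC$ inside $A_1$ must exit directly into $A_1 \setminus A_0$, i.e.\ into $\partial_{A_1} \cC$.

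To make this precise, I would fix a configuration in $\{\cC \text{ is an open cluster in } A_0\} \cap \{\cC \sa{A_1} B\}$ (the claim is trivial when $B \cap \cC \ne \varnothing$, so I may assume $B \cap \cC = \varnothing$). Pick any open path in $A_1$ from a vertex of $\cC$ to a vertex of $B$, and consider its \emph{last} exit from $\cC$; let $w$ be the vertex immediately after that exit. Because $w$ is an open-edge neighbor of a vertex in $\cC$, and the conditioning rules out any open edge from $\cC$ to $A_0 \setminus \cC$, we must have $w \in A_1 \setminus A_0$, hence $w \in \partial_{A_1} \cC$. By the ``last exit'' choice, the rest of the path from $w$ to $B$ avoids $\cC$ entirely, so it witnesses $w \sa{A_1 \setminus \cC} B$. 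This establishes the pointwise inclusion
\[\{\cC \text{ is an open cluster in } A_0\} \cap \{\cC \sa{A_1} B\} \subseteq \{\cC \text{ is an open cluster in } A_0\} \cap \bigcup_{w \in \partial_{A_1} \cC} \{w \sa{A_1 \setminus \cC} B\}.\]

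To conclude, I would observe that $\{\cC \text{ is an open cluster in } A_0\}$ is measurable with respect to the states of edges of $\edges(A_0)$ that have at least one endpoint in $\cC$, while each event $\{w \sa{A_1 \setminus \cC} B\}$ depends only on edges with both endpoints in $A_1 \setminus \cC$. These two edge sets are disjoint, so the events are independent under $\prob_p$. Dividing the inclusion above by $\prob_p(\cC \text{ is an open cluster in } A_0)$ and applying a union bound over $w \in \partial_{A_1} \cC$ to the right-hand side yields the claimed inequality. There is no real obstacle here: all the content is in the deterministic ``insulating layer'' step, after which independence and a union bound finish the proof.
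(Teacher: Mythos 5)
The paper does not give a proof of Lemma~\ref{lem:nofurther}; it is stated as a rephrasing of \cite[Lemma 3.2]{CH}, which is proved there by exactly the argument you give: a last-exit decomposition of the open path, followed by independence of the sealed-off cluster event from the continuation event, and a union bound. Your proof is correct and is the standard argument for such cluster-extension bounds.

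One small point to fix: the remark that ``the claim is trivial when $B \cap \cC \ne \varnothing$'' has it backwards. If $B \cap \cC \ne \varnothing$, the left-hand side equals $1$ but the right-hand side can vanish (take $A_0 = A_1$, so $\partial_{A_1}\cC = \varnothing$), so the inequality actually fails in that case. The hypothesis $B \cap \cC = \varnothing$ is an implicit requirement of the lemma rather than a case you can reduce to. Since you do proceed under this assumption, the rest of your argument is unaffected, but the justification of the reduction should be replaced by noting that $B \cap \cC = \varnothing$ is the only case in which the statement is meant to apply (and is the only case used in the paper).
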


        Finally, we will have use for the following  estimate for convolutions of power functions. We omit the elementary proof.
        \begin{lemma} Let $0<a,b$ be such that $0<a+b<d$ and $x\neq y\in \mathbb{Z}^d$. There is a constant $C>0$ such that
        \begin{equation}\label{eqn: convolution}\sum_{z\in \mathbb{Z}^d} \frac{1}{|z-x|^{d-a} |z-y|^{d-b}}\le \frac{C}{|x-y|^{d-a-b}}.
        \end{equation}
        \end{lemma}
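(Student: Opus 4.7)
The plan is to run the standard Riesz-convolution argument. Set $R = |x-y|$ and partition $\mathbb{Z}^d$ into the three regions
\[
U_1 = \{z : |z-x| \leq R/2\}, \qquad U_2 = \{z : |z-y| \leq R/2\}, \qquad U_3 = \mathbb{Z}^d \setminus (U_1 \cup U_2),
\]
then estimate each piece separately. On $U_1$ the triangle inequality forces $|z-y| \geq R/2$, so the contribution is at most $C R^{b-d} \sum_{z \in U_1} |z-x|^{a-d}$. I would control the remaining single-variable sum by a dyadic decomposition into shells of radius $\sim 2^{-j} R$ around $x$; since a shell at scale $r$ contains $O(r^d)$ lattice points and has integrand $\sim r^{a-d}$, the total is a geometric series in $2^{-ja}$ that converges thanks to $a > 0$ and sums to $O(R^a)$. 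Consequently $U_1$ contributes $O(R^{a+b-d})$, and the $U_2$ estimate is symmetric.

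For $U_3$, both $|z-x|, |z-y| > R/2$. I would further decompose $U_3$ dyadically by $V_k = \{z : 2^k R \leq |z-x| < 2^{k+1} R\}$ for $k \geq -1$. When $k \geq 1$, the triangle inequality yields $|z-y|$ and $|z-x|$ both of order $2^k R$, so the contribution from $V_k$ is at most $|V_k| \cdot (2^k R)^{a+b-2d} = O((2^k R)^{a+b-d})$, and summing over $k \geq 1$ produces a geometric series convergent by $d - a - b > 0$ with total $O(R^{a+b-d})$. For $k \in \{-1,0\}$, only $|z-x| \sim R$ is guaranteed; the outer factor then contributes $R^{a-d}$ and the inner factor, summed over a ball of radius $O(R)$ around $y$, gives $O(R^b)$, where convergence near $z = y$ uses $b > 0$. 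Combining the three regional bounds yields the announced estimate.

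I do not foresee any substantive obstacle: the only delicacies are bookkeeping and verifying that the three hypotheses $a > 0$, $b > 0$, and $d - a - b > 0$ each enter at exactly one point, to secure convergence of one of the three resulting sums. The paper's convention $|0|^{-s} = 1$ for $s > 0$ neatly absorbs the formal poles at $z = x$ and $z = y$, so no separate argument is needed there.
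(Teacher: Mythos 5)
The paper omits this proof as elementary, so there is no paper argument to compare against, but your proposal is the standard three-region Riesz-convolution argument and it is correct. The decomposition into $U_1 = \{|z-x|\leq R/2\}$, $U_2 = \{|z-y|\leq R/2\}$, and the exterior $U_3$, followed by dyadic shell counting (using the lattice cardinality $O(r^d)$ per shell), correctly routes the three hypotheses $a>0$, $b>0$, and $a+b<d$ to exactly the three places they are needed: convergence of the inner sum near $x$ on $U_1$, near $y$ on $U_2$, and of the far-field geometric series over $k$ on $U_3$. The observation that $|x-y|\geq 1$ on the lattice (so the formal pole terms $z=x$, $z=y$ contribute $O(1)\leq O(R^a)$, resp.\ $O(R^b)$, and are absorbed via the paper's convention $|0|^{-s}=1$) closes the only loose end. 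One cosmetic remark: on $U_3$ for $k\in\{-1,0\}$ you actually have both $|z-x|$ and $|z-y|$ comparable to $R$ automatically (since $z\in U_3$ forces $|z-y|>R/2$ and $|z-x|<2R$ forces $|z-y|<3R$), so the "convergence near $z=y$ uses $b>0$" remark is unnecessary there; but the bound you state is still valid and the slack is harmless.
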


        {\bf A note about constants.} In addition to special recurring constants which are kept track of for a stretch of time throughout the paper (like $K_0$ from Lemma~\ref{lem:reg}), there are various constants that appear in our estimates whose exact value will not be important. We generally use the symbols $C$ and $c$ to represent a generic positive constant whose value may change from line to line. Unless otherwise specified, the value of such a constant depends only on the lattice under consideration and not on any other parameters introduced in the course of the arguments.

        One special  case worth noting is the fixed choice of $K_2$ that will be made as the output of Lemma~\ref{lem:glue} and specifically remarked upon at \eqref{eq:Kfixed}. From that point onward, the value $K_2$ will be carried forward in any arguments relying on the output of Lemma~\ref{lem:glue}, and hence this constant will be recalled multiple times in the latter half of the paper. 
        
        \subsection{Initial preparation \label{sec:fixing}}
        To prove Theorems~\ref{thm:main} and \ref{thm:main2}, it will suffice to show the convergence of the probability of a single cylinder event. In what follows, for the proof of Theorem~\ref{thm:main}, we fix a sequence $(V_n, D_n)$ as described in the statement of that theorem. For the proofs of both theorems we fix a particular cylinder event:
        \begin{equation}
		\label{eq:ee}
		\text{with } L \geq 1  \text{ fixed, we let }  E \text{ be an arbitrary event measurable with respect to } (\omega_e)_{e \in B(2^L)}\ .
		\end{equation}
        For this choice of $E$ and $L$, we will show the following results:
        \begin{equation}
            \label{eq:thm1}
            \nu(E) := \lim_{n \to \infty} \prob( E \mid 0 \sa{\Z^d \setminus D_n} V_n ) \quad \text{exists};
        \end{equation}
        \begin{equation}
            \label{eq:thm2}
            \lim_{p \searrow p_c}  \prob_p( E \mid 0 \lra \infty ) = \nu(E), \text{ the quantity from \eqref{eq:thm1}}.
        \end{equation}
        Theorem~\ref{thm:main} follows from \eqref{eq:thm1} via a standard extension argument similar to the one in \cite{K}. Then Theorem~\ref{thm:main2} follows from \eqref{eq:thm2} similarly. {\bf We therefore devote our efforts to showing \eqref{eq:thm1} and \eqref{eq:thm2}, with $E$, $(V_n)$, and $(D_n)$ fixed.}

        Given an initial value of $L$ as in \eqref{eq:ee}, we  
        \begin{equation}
            \label{eq:k0choice}
            \text{choose a large parameter } k_1 \geq  L + \Lambda +  64d^4 + 4,
        \end{equation}
        where $\Lambda$ is the parameter of the spread-out lattice under consideration (if $\edges(\Z^d)$ is the nearest-neighbor edge set, we take $\Lambda = 0$ in the above). The reason for inclusion of this $\Lambda$ term has to do with a sequence of annuli $(Ann_i^q)$ defined at \eqref{eq:subann}. We wish to ensure that these annuli have pairwise disjoint boundaries; in particular, if $\cC$ is a spanning set of some $Ann_i^q$, it is not a spanning set of any other.
        
        The size of $k_1$ will determine the size of error estimates, in the following sense. In the final stages of our proof, we will show that 
        \begin{equation}
            \label{eq:firstsimp}
            \limsup_n \prob( E \mid 0 \sa{\Z^d \setminus D_n} V_n ) - \liminf_n \prob( E \mid 0 \sa{\Z^d \setminus D_n} V_n ) \leq \epsilon
        \end{equation}
        for some $\epsilon = \epsilon(k_1) \stackrel{k_1 \to \infty}{\longrightarrow} 0$; by taking $k_1 \to \infty$, we will establish \eqref{eq:thm1} and thereby prove Theorem~\ref{thm:main}. A similar error estimate applies to \eqref{eq:thm2}.
        
	We define the following {\bf scale sequences} $(k_i), (k_i^*)$ inductively, setting $k_i^* = 2 d^2 k_i $ and $k_i = 2 d^2 k_{i-1}^*$. Each of the two sequences grows  exponentially in $i$.
    We define a sequence of annuli $(Ann_i)_{i=1}^\infty$ by setting
    \begin{equation}
        \label{eq:annidef}
        Ann_i := B(2^{k_i^*+ 32d^4 + 1}) \setminus B(2^{k_i - 32d^4 - 1})\quad \text{for each $i \geq 1$}. 
    \end{equation}
    Since connections will be built from the origin $0$, it will simplify our notation to allow $0$ to be contained in some element of $(Ann_i)$. We therefore set $k_0^* = 1$ and let $Ann_0 := B(2) = B(2^{k_0^*})$ be an annulus in the degenerate sense of our definition. We append $Ann_0$ to the beginning of our sequence $(Ann_i)$.

	\subsubsection{Restricting to finite boxes\label{sec:supercrit}}
    The proofs of \eqref{eq:thm1} and \eqref{eq:thm2} are based on many of the same intermediate results about connectivity, and this common framework allows us to avoid excessive repetition of arguments. However, the presence of an infinite open cluster complicates the situation for $p > p_c$ --- for instance, $\prob_p(0 \lra x)$ does not decay to $0$ as $|x| \to \infty$ in this setting. 
    Our solution to this problem is to localize many of the connection events we consider.

    For each $p \in [p_c, 1)$, we set
    \begin{equation}
 	\label{eq:maxscale2}
 	\cutp = \cutp(p) := \sup \left\{ i: \frac{1}{2} \leq \frac{\prob_p(E)}{\prob_{p_c}(E)} \leq 2  \quad \text{for all $E \in \sigma\left((\omega_e)_{e \in B(2^{k_{i + 1}^*})}\right)$}\right\}\ .
 	\end{equation}
    In other words, this is the largest $i$ such that the Radon-Nikodym derivative $\mathrm{d}\prob_p \restriction_{\Sigma_{i + 1}} / \mathrm{d} \prob_{p_c} \restriction_{\Sigma_{i + 1}}$ of the restricted measures lies uniformly in $[1/2, 2]$, where $\Sigma_{i + 1}$ is the sigma-algebra generated by all bonds with both endpoints in $B(2^{k_{i+1}^*})$.
    We note that trivially, $\cutp(p) \to \infty$ as $p \searrow p_c$.



\subsection{\label{sec:regu} Regularity}
In \cite{KN}, Kozma-Nachmias introduced a powerful notion of regularity for high-dimensional open clusters. Their notions were extended in \cite{CH}. To give a sense of the use of their definition, note that the asymptotic \eqref{KN} shows there exists a $c > 0$ such that, for all $n \geq 1$,
\[\prob(0 \lra \partial B(2n) \mid 0 \lra \partial B(n)) \geq c\ .  \]
The arguments of \cite{KN} in fact show that a stronger conditional statement holds. Namely, such extensions are likely when the cluster $\fC_{B(n)}(0)$ is suitably regular: for all ``nice enough'' subgraphs $\cC \subseteq B(n)$, 
\[\prob(0 \lra \partial B(2n) \mid \fC_{B(n)}(0) = \cC) \geq c\ ,   \]
and it turns out to be typical that $\fC_{B(n)}(0)$ is nice in this sense.

Our arguments will require their own modified notion of regularity, where we consider clusters only within an element of our sequence $(Ann_i)_i$ from \eqref{eq:annidef} and so we introduce it with some care. In particular, we need to introduce sub-annuli of our main sequence $(Ann_i)$.  For each $i \geq 1$, we set 
\begin{equation}
    \label{eq:subann}
    Ann_i^{q} = B(2^{k_i^* + q}) \setminus B(2^{k_i - q}),\quad 0 \leq q \leq 32d^4\ .
\end{equation}
We also need to introduce an appropriate intermediate scale between the inner and outer radius of $Ann_i$. We set 
\begin{equation}
    \label{eq:lidef}
    \ell_i :=  d k_i \quad \text{and} \quad S_i := B\left(2^{\ell_i}\right)\ . 
\end{equation}

We also need an event describing a cluster not being excessively large.
For each $x \in \Z^d$ and integer $s \geq 1$, we define the event
\begin{equation}
    \label{eq:Tdef}
    \mathcal{T}_s(x):=\{|\fC(x) \cap B(x;s)| < s^4 \log^7 s\}.
\end{equation}
We use the event $\mathcal{T}_s(x)$ to define annular notions of regularity and badness. 
\begin{definition} \label{defin:regdef}
Suppose $x \in Ann_{i}^q$ for some $i \geq 1$ and $0 \leq q \leq 32d^4$.
\begin{enumerate}[label=\roman*]
    \item   We say $x$ is $(i, q,s)$-bad for an outcome $\omega$ if 
\begin{equation}
\label{eq:badness}
   \text{  in the outcome $\omega$, \quad } \mathbb{P}_{p_c}(\mathcal{T}_s(x)\mid\fC_{Ann_i^q}(x)) < 1 - \exp(-\log^2 s). 
\end{equation}
\item We say $x$ is $(i, q,K)$-regular  for an outcome $\omega$ if it is not $(i, q,s)$-bad for any $s \geq K$, and $x$ is $(i, q, K)$-irregular if it is not $(i, q,K)$-regular.
\item Finally, we define the random set
\begin{equation}
\label{eq:regset}
    Reg^{(i, q,K)}(x):=\{y \in \fC_{Ann_i^q}(x): y\,\text{is $(i, q,K)$-regular}\}.
\end{equation}
We note that according to our definitions 
\[\partial_{in} Reg^{(i, q,K)}(x) = Reg^{(i, q,K)}(x)\cap \partial_{in} \fC_{Ann_i^q}(x)\]
(suppressing the obvious dependence of the boundary on $q$ and $i$) and prefer the former notation for brevity, with a similar convention for the outer boundary $\partial_{out} Reg^{(i, q,K)}(x)$.
\end{enumerate}
\end{definition}

The notions introduced in Definition~\ref{defin:Kgood} refer to probabilities only at the critical point $p_c$ and not those for $p > p_c$. However, badness and regularity are properties of an outcome $\omega$, and so it makes sense to consider, for instance, the bad sites in an $\omega$ sampled from $\prob_p$ for $p > p_c$. 
We note that
\begin{equation}
    \label{eq:regcons} 
    \text{in $\omega$, if $x$ is not $(i, q, s)$-bad, then $|\fC_{Ann_i^q}(x)  \cap B(x;s)| < s^4 \log^7 s$},
\end{equation}
a fact which follows immediately from the definition. 
Importantly, $(i, q, K)$-regularity is a typical property: when $\fC_{Ann_i^q}(x)$ is large, then most of its vertices on the boundary of $Ann_i^q$ are $(i, q, K)$-regular. 

With these definitions and observations in hand, we can state a lemma that will be useful in the arguments in Section~\ref{sec:glue} below. It formalizes the notion that ``regularity is typical'' in the way that is most suitable for our geometry. 
\begin{lemma}
\label{lem:reg}
There exists a constant $K_0 > 1$ such that, for all $K \geq K_0$, there are constants $C, c > 0$ such that the following holds. For all choices of $k_1$ as in \eqref{eq:k0choice}, for any $i \geq 1$, all $0 \leq q \leq 32d^4$, and all $x \in Ann_i^0$, and for each $M\ge 1$: 
\begin{equation*}
        \mathbb{P}\left(\left|\partial_{in} \fC_{Ann_i^q}(x)\right| \geq M \,\text{and}\, \left|\partial_{in} Reg^{(i, q,K)}(x)\right| \leq \frac{\left|\partial_{in} \fC_{Ann_i^q}(x)\right|}{2}\right) \leq C   \exp\left(C k_i -c \log^2 M\right).
\end{equation*}
\begin{equation*}
\mathbb{P}\left(\left|\partial_{out} \fC_{Ann_i^q}(x)\right| \geq M \,\text{and}\, \left|\partial_{out} Reg^{(i, q,K)}(x)\right| \leq \frac{\left|\partial_{out} \fC_{Ann_i^q}(x)\right|}{2}\right) \leq C \exp(C k_i^* -c \log^2 M). 
\end{equation*}
\end{lemma}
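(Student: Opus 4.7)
The plan is to convert the condition of boundary-irregularity into a cluster-volume anomaly using the definition of $(i,q,s)$-badness, and then to estimate the resulting tail. I focus on the inner-boundary statement; the outer-boundary case is treated analogously, with the counting estimate $|\partial_{in}Ann_i^q|\leq\exp(Ck_i)$ replaced by $|\partial_{out}Ann_i^q|\leq\exp(Ck_i^*)$. Let $A:=Ann_i^q$ and write $N_{irr}$ for the number of $(i,q,K)$-irregular vertices of $\partial_{in}\fC_A(x)$. The event in the lemma forces $N_{irr}\geq M/2$, so it is enough to bound $\prob(N_{irr}\geq M/2)$.

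The core per-site estimate is as follows. For fixed $y\in\partial_{in}A$ and $s\geq K$, set $F_{y,s}:=\{y\in\partial_{in}\fC_A(x)\}\cap\{y\text{ is }(i,q,s)\text{-bad}\}$. Because $F_{y,s}$ is $\sigma(\fC_A(x))$-measurable and Definition~\ref{defin:regdef} guarantees $\prob(\mathcal{T}_s(y)^c\mid\fC_A(y))\geq e^{-\log^2 s}$ on $F_{y,s}$, integration yields
\[\prob(F_{y,s})\;\leq\;e^{\log^2 s}\,\prob(\mathcal{T}_s(y)^c).\]
The right-hand side is controlled by high-dimensional cluster-volume tail estimates, derivable from \eqref{2pt} via moment/truncation techniques in the style of \cite{KN,CH}. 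The threshold $s^4\log^7 s$ in \eqref{eq:Tdef} is calibrated precisely so that this tail dominates the $e^{\log^2 s}$ loss, leaving a tail in $s$ that is super-polynomial.

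I would then combine the pieces by a dyadic scale split, writing $N_{irr}\leq\sum_{j\geq\log_2 K}N_j$, where $N_j$ counts $y\in\partial_{in}\fC_A(x)$ that are bad at some scale in $[2^j,2^{j+1})$. The event $N_{irr}\geq M/2$ forces $N_j\geq a_jM$ for some $j$, with weights $(a_j)$ chosen so that $\sum_ja_j=1/2$. In the large-scale regime $2^j\geq M$, a Markov bound using the single-site estimate, summed over $y\in\partial_{in}A$, immediately gives a tail of the form $\exp(Ck_i-c\log^2 M)$. In the moderate regime $K\leq 2^j<M$, per-vertex badness no longer carries $M$-dependence, and the target decay must instead come from the collective constraint $N_j\geq a_jM\Rightarrow|\fC_A(x)|\geq a_jM$, controlled via an appropriately refined volume-tail estimate on $|\fC_A(x)|$ at the outer scale $R=2^{k_i^*+q}$. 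The at most $O(k_i^*)$ resulting dyadic contributions are absorbed into the prefactor $\exp(Ck_i)$. The main technical obstacle is precisely this moderate-scale regime, where per-vertex reasoning is insufficient and the argument has to pivot to a collective cluster-volume bound; matching the exponent $\log^2 M$ in the target requires carefully balancing the numerology in Definition~\ref{defin:regdef} against the available tail bounds for $|\fC_A(x)|$, which is the heart of the argument.
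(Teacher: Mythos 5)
The paper itself does not spell out a proof of Lemma~\ref{lem:reg}; it simply asserts that the argument is ``essentially the same method as in \cite{CH}'' and omits details. So the only thing to assess is whether your outline plausibly fills that gap. Your opening move is correct and does match the [KN]/[CH] template: for a fixed $y$ and $s$, writing $F_{y,s}$ for the badness event, the inequality $\prob(F_{y,s}) \leq e^{\log^2 s}\,\prob(\mathcal{T}_s(y)^c)$ follows exactly as you say, and a super-polynomial tail for $\prob(\mathcal{T}_s(y)^c)$ (driven by the high-moment/tree-graph bound $\E[|\fC(y)\cap B(y;s)|^k]\leq C^k k!\,s^{4k-2}$ and Markov, giving roughly $s^{-2}e^{-c\log^7 s}$) makes the per-vertex badness probability decay super-polynomially in $s$. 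So far so good.

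The problem is the combination step. Your ``collective constraint'' $N_j \geq a_j M \Rightarrow |\fC_A(x)| \geq a_j M$ is circular: it is strictly weaker than the hypothesis already in force, since on the event in question we already have $|\fC_A(x)| \geq |\partial_{in}\fC_A(x)| \geq M \geq a_j M$. It therefore cannot supply any new decay. Moreover, if one simply runs per-vertex bounds plus Markov and the dyadic pigeonhole, the resulting bound is of the form $\prob(\cdot) \leq C|\partial_{in}Ann_i^q|\,\varepsilon(K)/M$, i.e.\ $\exp(Ck_i - \log M)$ up to constants; since $M \leq |\partial_{in}Ann_i^q| \leq e^{Ck_i}$ means $\log M \lesssim k_i$ while the target $\exp(Ck_i - c\log^2 M)$ demands decay quadratic in $\log M$, the two are genuinely different orders, and the Markov route alone does not close the gap. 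You correctly identify the moderate-scale regime as ``the heart of the argument,'' but the fix you suggest does not work, and the proposal therefore does not establish the lemma. The missing ingredient is precisely what [CH] supplies and what needs to be imported: a refined, non-first-moment handling of the intermediate scales, not just a volume lower bound for $\fC_A(x)$ that we already possess.
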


Lemma~\ref{lem:reg} is proved by essentially the same method as in \cite{CH}; see also \cite{CHS} for similar adaptations of such regularity lemmas. As such, we omit a formal proof here.

	\section{Gluing constructions and lemmas\label{sec:glue}}
    \label{sec: structural}
This section is devoted to proving connectivity properties of large-scale clusters. In this introductory portion, we define (in Definition~\ref{defin:Kgood}) the notion of ``good'' sets with properties of typical large open clusters. We then (in Section~\ref{sec:subglue}) state and prove a lemma (Lemma~\ref{lem:glue}) arguing that when a large open cluster is a good set, its probability to have a further connection to another good spanning cluster or vertex is well-controlled. Finally, in Section~\ref{sec:tower}, we prove that when $0$ has an open connection to $V_n$ avoiding $D_n$ as in the conditioning from our Theorems~\ref{thm:main} and \ref{thm:main2}, this connection can be decomposed into a sequence of good sets and open paths between them.  

We recall  here the sequences of annuli we introduced at \eqref{eq:annidef} and \eqref{eq:subann}:
\begin{equation*}
        Ann_i := B(2^{k_i^*+ 32d^4 + 1}) \setminus B(2^{k_i - 32d^4 - 1})\quad \text{for each $i \geq 1$},
    \end{equation*}
and
\begin{equation*}
    Ann_i^{q} = B(2^{k_i^* + q}) \setminus B(2^{k_i - q}),\quad 0 \leq q \leq 32d^4\ .
\end{equation*}

	We further recall the parameter $K_0$ introduced in the statement of Lemma~\ref{lem:reg}. We will consider possible realizations of spanning clusters in annuli of the form $Ann_i^q$. The next definition, Definition~\ref{defin:Kgood}, describes the vertex sets which represent typical such realizations. 
    We recall, as explained just below \eqref{eq:k0choice}, that by our choice of scale sequences, $\partial_{in} Ann_i^q \cap \partial_{in} Ann_i^{r} = \varnothing$ for any $r \neq q$, with analogous statements holding for the $\partial_{out}$ boundaries as well. As such, if $\cC$ is a spanning set in $Ann_i^q$ for some $i$ and $q$, it cannot be a spanning set in any other values of $i$ or $q$. There is thus no ambiguity if we
    \begin{equation}
        \label{eq:Hdef}
        \text{define, for each spanning set $\cC$ in $Ann_i^q$,} \quad H(\cC) := \{\cC \text{ is a spanning cluster of $Ann_i^q$}\}\ ,
    \end{equation}
    since the annulus $Ann_i^q$ is uniquely determined by $\cC$. For similar reasons, we write $\partial_{in} \cC$ and $\partial_{out} \cC$ for $\partial_{in}^{Ann_i^q} \cC$ and $\partial_{out}^{Ann_i^q} \cC$ in Definition~\ref{defin:Kgood} and other contexts where it will not cause confusion. 
    
    As a final abbreviation in a similar spirit, for $\cC$ spanning $Ann_i^q$ as in the preceding discussion, we choose an $x \in \cC$ so that on $H(\cC)$ we have $\fC_{Ann_i^q}(x) = \cC$, and then write
    \begin{equation}
        \label{eq:Hdef2}
        Reg^K(\cC) := Reg^{(i, q,K)}(x)\ .
    \end{equation}
    (We note that $Reg^K(\cC)$ is constant on the event $H(\cC)$, taking the same value for each $\omega \in H(\cC)$.) Again, the values of $i$ and $q$ are uniquely determined by $\cC$, and so the omission of these indices does no harm.
    
	\begin{definition} \label{defin:Kgood}
		A subgraph $\cC$ of $Ann_i$ is a \emph{$K$-good spanning set} if:
	\begin{enumerate}
		\item $\cC$ is a connected spanning set of $Ann_i^q$ for some  (unique) $q \geq 1$.  \label{it:good1}
		\item Letting $1 \leq q \leq 32d^4$ be the index such that $\cC$ spans $Ann_i^q$, we have
        \[2^{7 k_i/4} \leq \left| \partial_{in} \cC \right| \leq 2^{9 k_i/4} \quad\text{and}\quad 2^{7 k_i^*/4} \leq \left| \partial_{out} \cC \right| \leq 2^{9 k_i^*/4}.\]
		\item  \label{it:good3} Letting $H(\cC)$ be as defined above at \eqref{eq:Hdef}, we have
        \begin{align*}
            H(\cC) \subseteq \left\{\left|\partial_{in} \cC \cap Reg^K(\cC)\right| \geq \frac{1}{2} \left|\partial_{in} \cC\right| \right\} \cap \left\{\left|\partial_{out} \cC \cap Reg^K(\cC)\right| \geq \frac{1}{2} \left|\partial_{out} \cC\right| \right\}\ ,
        \end{align*}
        recalling that at \eqref{eq:Hdef2} we introduced this abbreviation for the regular set from Definition~\ref{defin:regdef}. 
    \item \label{it:good4} For each $1 \leq r < q$, there is no connected component $\mathcal{D}$ of $\cC \cap Ann_i^r$ such that the previous items all hold (replacing $\cC$ by $\cD$ in each).
	\end{enumerate}
 When the value of $K$ is clear from context, we simply call $\cC$ a \emph{good spanning set}. 
	\end{definition}
 In fact, in most of what follows, we will fix a particular $K > K_0$ and omit the $K$-dependence from our notation. The specific value of $K$ will be chosen according to the statement of Lemma~\ref{lem:glue} below and will depend only on the lattice under consideration (in particular, not on the sequences $(D_n)$ or $(V_n)$, or on the choice of event $E$ from \eqref{eq:thm1} or \eqref{eq:thm2}).

    Definition~\ref{defin:Kgood} primarily exists to describe possible open clusters; when an annulus $Ann_i$ is spanned by an open cluster, we can typically find a good spanning set $\cC$ such that $H(\cC)$ occurs. The final item~\ref{it:good4} of Definition~\ref{defin:Kgood} is introduced to ensure that such $\cC$ is chosen ``minimally''. One could envision discovering such a set $\cC$ for which $H(\cC)$ occurs by exploring open clusters in $Ann_i^q$ for increasing values of $q$, and stopping at the first value of $q$ for which a $\cC$ satisfying the other items of Definition~\ref{defin:Kgood} may be found. A set~$\cC$ discovered by such a process would then satisfy item~\ref{it:good4} by construction.

    \subsection{\label{sec:subglue} Structural properties of large open clusters}
    Definition~\ref{defin:Kgood} will be most useful via an application of the next result, Lemma~\ref{lem:glue}, which shows that good spanning sets can be ``glued''. 
	For the statement of Lemma~\ref{lem:glue}, it will be useful to separate  each $Ann_i$ into the unequal ``halves'' $Ann_i \cap S_i$ and $Ann_i \setminus S_i$, where we recall the definition of $S_i$ and $\ell_i$ from \eqref{eq:lidef}. A connection from $\partial_{out} Ann_i$ to $V_n$ will be shown unlikely to enter the inner half $Ann_i \cap S_i$, along with a similar statement for connections from $0$ to $\partial_{in} Ann_i$.

	\begin{lemma} \label{lem:glue}
        There exist a choice of $K_2 > K_0$ and constants $0 < c < C < \infty$ depending only on the lattice such that the following holds.
		Uniformly in $k_1 > C$ used to define $(Ann_i)$ in \eqref{eq:k0choice},  in $p \in [p_c, 1)$, in indices $0 \leq i < j \leq \beta(p)$, in $K_2$-good spanning sets $\cC \subseteq Ann_i$ and $\cD \subseteq Ann_j$, and in vertices $x, y$ with $\|x\|_\infty \geq 2^{k_i^* + 2d + 6}$ and $\|y\|_\infty \leq 2^{k_i - 2d - 3}$, the following estimates hold: 
		\begin{enumerate}
			\item \label{it:twoclust} $\displaystyle c  |\partial_{out}\cC| |\partial_{in} \cD|  2^{(2-d)k_j}\leq \prob_p\big(\cC \sa{S_j \setminus S_i} \cD  \mid H(\cC), H(\cD) \big) \leq C  |\partial_{out}\cC| |\partial_{in} \cD| 2^{(2-d)k_j}\ ;$
			\item \label{it:knversion}  $ c |\partial_{out}\cC| |x|^{2-d} \leq \displaystyle \prob_p\left(\cC \xleftrightarrow{B(2\|x\|_{\infty}) \setminus S_i } x  \mid H(\cC) \right) \leq C |\partial_{out}\cC| |x|^{2-d};$
   \item \label{it:hs} $\displaystyle \prob_p\left(\cC \xleftrightarrow{B(\|x\|_{\infty})} x  \mid H(\cC)\right) \leq C |\partial_{out}\cC| |x|^{(1-d)};$
			\item \label{it:toorig} $c |\partial_{in}\cC| 2^{(2-d)k_i}\leq \displaystyle \prob_p\big(\cC \xleftrightarrow{S_i} y\mid H(\cC) \big) \leq C|\partial_{in}\cC| 2^{(2-d)k_i}. $
			\end{enumerate}
		\end{lemma}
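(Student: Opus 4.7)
The plan is to reduce to the critical case $p = p_c$ via the bounded Radon--Nikodym argument from \eqref{eq:maxscale2} (legitimate since $i < j \leq \beta(p)$ and all relevant edges lie in boxes up to scale $k_{\beta(p)+1}^*$), then prove the upper bounds by union bounds via Lemma~\ref{lem:nofurther} and the lower bounds by a Paley--Zygmund argument on pairs of regular boundary vertices. Once the critical reduction is made, the inputs \eqref{2pt}, \eqref{eq:restrtwopt}, \eqref{eq:hstwopt}, and \eqref{eqn: convolution} are directly available.

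For the upper bound in item~\ref{it:twoclust}, I would apply Lemma~\ref{lem:nofurther} twice --- once with $A_0 = Ann_i^q$ and $A_1 \supseteq S_j \setminus S_i$ applied to $\cC$, and once analogously to $\cD$ in $Ann_j^{q'}$ --- to reduce the conditional connection probability to a double sum of unconditional restricted two-point probabilities $\prob(w \sa{} z)$ over $w, z$ in the outer neighborhoods of $\partial_{out}\cC$ and $\partial_{in}\cD$. These neighbor sets have cardinalities $O(|\partial_{out}\cC|)$ and $O(|\partial_{in}\cD|)$ from the lattice degree, while the scale separation $k_j \geq 2d^2 k_i^*$ forces $|w - z| = \Theta(2^{k_j})$; so \eqref{2pt} gives each summand $\leq C 2^{(2-d) k_j}$ and the desired bound. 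Item~\ref{it:knversion} is identical with $\{x\}$ replacing $\cD$; item~\ref{it:hs} substitutes the boundary estimate \eqref{eq:hstwopt} (which applies since $x$ lies on the boundary of the restricting box) to yield the improved $|x|^{1-d}$ factor; and the upper bound in item~\ref{it:toorig} runs in parallel with $\partial_{in}\cC$ and target $y$ replacing $\partial_{out}\cC$ and $\cD$, working inside $S_i$.

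For the lower bounds in items~\ref{it:twoclust}, \ref{it:knversion}, and \ref{it:toorig}, I would define, in the setting of item~\ref{it:twoclust},
\[N := \bigl| \{ (u, v) \in \partial_{out} Reg^{K_2}(\cC) \times \partial_{in} Reg^{K_2}(\cD) : u \sa{S_j \setminus S_i} v \} \bigr|\]
(with single-target analogues for items~\ref{it:knversion} and \ref{it:toorig}) and apply Paley--Zygmund:
\[\prob\bigl(N \geq 1 \mid H(\cC), H(\cD)\bigr) \geq \E[N \mid H(\cC), H(\cD)]^2 / \E[N^2 \mid H(\cC), H(\cD)].\]
By item~\ref{it:good3} of Definition~\ref{defin:Kgood}, at least half the vertices of $\partial_{out}\cC$ and $\partial_{in}\cD$ are regular. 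Combining \eqref{eq:restrtwopt} with the extension estimates of \cite{KN, CH} --- which show that for a regular $u$, conditioning on $H(\cC)$ depresses the probability of $\{u \sa{S_j \setminus S_i} v\}$ by only a bounded constant factor relative to its unconditional analog, and similarly for $\cD$ --- gives the first-moment lower bound $\E[N \mid H(\cC), H(\cD)] \geq c |\partial_{out}\cC| |\partial_{in}\cD| 2^{(2-d) k_j}$.

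The hard part will be controlling $\E[N^2 \mid H(\cC), H(\cD)]$. Expanding over pairs $((u,v),(u',v'))$, I would separate the BK contribution $\prob(u \sa{} v) \prob(u' \sa{} v')$ (which sums to $O(\E[N \mid \cdot]^2)$ as required) from the ``crossing'' contribution in which the two connections share a common vertex $w$; the latter reduces via the tree-graph inequality to sums of three-fold convolutions of two-point functions, controlled by \eqref{eqn: convolution}. The delicate step is absorbing the dependence from conditioning jointly on $H(\cC)$ and $H(\cD)$: I would use the super-polynomial tail $\exp(-\log^2 s)$ in \eqref{eq:badness} to decouple the conditional cluster extensions from distinct regular vertices up to a factor approaching $1$ as $K_2 \to \infty$. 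Choosing $K_2$ sufficiently large makes the crossing contribution subdominant, yielding $\E[N^2 \mid \cdot] \leq C (\E[N \mid \cdot])^2 / \bigl(|\partial_{out}\cC| |\partial_{in}\cD| 2^{(2-d) k_j}\bigr)$; Paley--Zygmund then closes items~\ref{it:twoclust}, \ref{it:knversion}, and \ref{it:toorig} and fixes the lemma's value of $K_2$.
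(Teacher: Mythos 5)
Your reduction to $p=p_c$ and your upper-bound strategy via Lemma~\ref{lem:nofurther} match the paper exactly. The lower bound, however, has a genuine gap.

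You define $N$ to count all connected pairs $(u,v)$ of regular boundary vertices and then propose Paley--Zygmund, claiming $\E[N^2 \mid \cdot] \leq C\mu$ where $\mu := \E[N \mid \cdot] \asymp |\partial_{out}\cC|\,|\partial_{in}\cD|\,2^{(2-d)k_j}$. This second-moment bound is not achievable for your $N$: on the connection event $\{\cC \sa{S_j \setminus S_i} \cD\}$, once one pair $(u,v)$ is connected, essentially every pair $(u',v')$ with $u'$ in the same piece of $\cC$ and $v'$ in the same piece of $\cD$ is also connected, because the restricted region $S_j \setminus S_i$ does not exclude edges of $\cC$ or $\cD$. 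Thus $N$ behaves like $|\partial_{out}\cC|\,|\partial_{in}\cD|\,\mathbbm{1}_{\{\cC \lra \cD\}}$ up to constants, so $\E[N^2]$ is of order $(|\partial_{out}\cC|\,|\partial_{in}\cD|)^2 \cdot 2^{(2-d)k_j}$, and Paley--Zygmund then gives only $\prob(N\geq 1) \gtrsim 2^{(2-d)k_j}$ --- missing the factors of $|\partial_{out}\cC|$ and $|\partial_{in}\cD|$ you need. No amount of triangle-diagram control or decoupling via the $\exp(-\log^2 s)$ tails will repair this, since the issue is massive positive correlation through the deterministic sets $\cC, \cD$ themselves.

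The paper avoids a second moment entirely. It counts only pairs $(x_i, x_j)$ for which the specific edges $\{x_i, x_i^*\}$ and $\{x_j, x_j^*\}$ are open \emph{and pivotal} for the connection event (the set $Y(K)$ in \eqref{eq:Ydef}); Proposition~\ref{prop:Ybern} shows $|Y(K)| \leq 1$ deterministically, so $\prob(|Y(K)|>0 \mid \cdot) = \E[|Y(K)| \mid \cdot]$ and only a first moment is needed. That first moment is then lower-bounded by a modification (surgery) argument (Proposition~\ref{prop:Eprop}) that introduces nearby auxiliary vertices $x_i', x_j'$ outside $\cC\cup\cD$ and carefully controls the error terms from the connection re-entering $\cC$, $\cD$, or $S_i$. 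This modification step is also where the work you delegate to ``extension estimates of \cite{KN, CH}'' actually lives --- conditioning jointly on $H(\cC)$ and $H(\cD)$ is nontrivial and requires the restricted-two-point and convolution estimates you cite, but assembled into a diagrammatic bound rather than invoked as a black box. To close your argument you would need to either replace $N$ by a count of pivotal configurations (recovering the paper's structure) or otherwise truncate $N$ to be bounded.
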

    An estimate somewhat similar to a summed version of item \ref{it:knversion} from Lemma~\ref{lem:glue}, without the restriction to $B(2 \|x\|_\infty) \setminus S_i$, was a main estimate in \cite{KN}. Estimates of the probability of various restricted connectivity events similar to the ones in the statement of the lemma were proved in \cite{CH}. 
    
    However, the present setting, and in particular the content of item~\ref{it:twoclust} from Lemma~\ref{lem:glue}, differs substantially from the settings of previous work. We prove this item in detail,  then we give thorough sketches of the modifications necessary to prove the other parts of Lemma~\ref{lem:glue}.
  \begin{proof}[Proof of Lemma~\ref{lem:glue}, item~\ref{it:twoclust}]
    We note that the events $\left\{\cC \sa{S_j \setminus S_i} \cD\right\}$, $H(\cC)$, and $H(\cD)$ are cylinder events. By the definition of $\beta(p)$ in \eqref{eq:maxscale2}, it thus suffices to prove the lemma in case $p = p_c$. We therefore restrict to this case in all that follows.
In the course of our proof, we introduce two intermediate results. The first of these, Proposition~\ref{prop:Ybern}, is essentially graph-theoretic, and the second, Proposition~\ref{prop:Eprop}, involves diagrammatic estimates. To avoid interrupting the flow of the other arguments, we save the proof of these propositions for last.

The second inequality of item~\ref{it:twoclust} is relatively straightforward. By Lemma~\ref{lem:nofurther} and \eqref{2pt},
\begin{align}
    \nonumber \prob_{p_c}\Bigg(&\left\{\mathcal{C} \sa{S_j \setminus S_i} \mathcal{D}\right\} \cap H(\mathcal{C})\cap H(\mathcal{D})\Bigg) \\
     &\leq  \sum_{\substack{x \in \partial_{out}\cC \\ x^* \notin \cC: \, \{x, x^*\} \in \edges(\Z^d)}} \sum_{\substack{y \in \partial_{in}\cD \\ y^* \notin \cD: \, \{y, y^*\} \in \edges(\Z^d)}} \prob_{p_c}(x^* \xleftrightarrow{S_j \setminus [S_i \cup \cC \cup \cD]} y^*) \prob_{p_c}(H(\cC) \cap H(\cD))  \label{eq:indclust}\\
    \nonumber &\leq C \sum_{x \in \partial_{out}\cC} \sum_{y \in \partial_{in}\cD} \prob_{p_c}(x \leftrightarrow y)\prob_{p_c}(H(\cC) \cap H(\cD))\\
     &\leq \label{eq:useddist} C2^{(2-d)k_j} |\partial_{out}\cC| |\partial_{in} \cD| \prob_{p_c}(H(\mathcal{C})\cap H(\mathcal{D}))\ .
\end{align}
In \eqref{eq:indclust}, we used the independence of the $\omega_e$ variables corresponding to disjoint edge sets, and in \eqref{eq:useddist} we used that $\|x - y\|_\infty \geq 2^{k_j - 1}$ for all $x \in \cC$ and $y \in \cD$. This shows the upper bound on the probability appearing in item~\ref{it:twoclust} from Lemma~\ref{lem:glue}.

We now turn to the lower bound on the probability from item~\ref{it:twoclust}. We consider fixed values of $i$, $j$, and good spanning sets $\cC$, and $\cD$ in the arguments that follow. These values will be arbitrary, and all constants will be taken uniform relative to these choices, ensuring the uniformity claimed in the lemma's statement. As discussed below Definition~\ref{defin:Kgood}, $\cC$ is associated with $Ann_i^q$ for unique values of $i$ and $q$, with a similar statement for $\cD$. In what follows, then, we write $q_i$ for the unique choice of index such that $\cC$ is a spanning set of $Ann_i^{q_i}$ and write $q_j$ for the unique index such that $\cD$ is a spanning set of $Ann_j^{q_j}$.  We similarly use $i$ subscripts on other vertices associated to $Ann_i$ and $j$ subscripts on vertices associated to $Ann_j$ as a mnemonic device.

 For each vertex $x_i \in \partial_{out}\cC$ and each vertex $x_j \in \partial_{in} \cD$, we let $x_i^*$ (resp.~$x_j^*$) be a deterministically chosen neighbor of $x_i$ (resp.~$x_j$) satisfying $x_i^* \notin B(2^{k_i^* + q_i})$ (resp.~$x_j^* \in B(2^{k_j - q_j})$. 
 In other words, $x_i^*$ lies in the component of $\Z^d \setminus Ann_i^{q_i}$ which does not contain the origin, with an analogous statement holding for $x_j^*$.

With $i,$ $j$, $\cC$, and $\cD$ fixed as above, we define the random set 
\begin{equation}
    \label{eq:Ydef}
    Y(K) := \left\{(x_i, x_j): \, \begin{array}{c}
      x_i \in Reg^{K}(\cC) \cap  \partial_{out}\cC, \quad x_j \in Reg^{K}(\cD) \cap  \partial_{in}\cD: \\
      \{x_i, x_i^*\} \text{ and } \{x_j, x_j^*\} \text{ are open and pivotal for } \mathcal{C} \sa{S_j \setminus S_i} \mathcal{D}
    \end{array}\right\}\ .
\end{equation}
We will ultimately fix $K$ large independent of $i$, $j$, and the clusters $\cC$, $\cD$ such that the first moment of $|Y(K)|$ conditional on $H(\cC)\cap H(\cD)$ can be bounded uniformly below.  The role of this first moment is clarified by the following short proposition: 
\begin{proposition}\label{prop:Ybern}
For each $K \geq 1$,
$\prob_{p_c}(Y(K) \in \{0, 1\} \mid H(\cC)\cap H(\cD)) = 1$.
\end{proposition}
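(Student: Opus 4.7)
The plan is to prove the stronger deterministic statement that $|Y(K)(\omega)| \leq 1$ for every $\omega$, which immediately implies the proposition and renders the conditioning on $H(\cC) \cap H(\cD)$ irrelevant. Suppose for contradiction that two distinct pairs $(x_i, x_j), (x_i', x_j') \in Y(K)$ exist. At least one of $x_i \neq x_i'$ or $x_j \neq x_j'$ holds; by a symmetric argument that swaps the roles of $\cC$ and $\cD$, it suffices to address $x_i \neq x_i'$. This gives two distinct open edges $\{x_i, x_i^*\}$ and $\{x_i', (x_i')^*\}$, each pivotal for $\{\cC \sa{S_j \setminus S_i} \cD\}$, with $\cC$-side endpoints $x_i, x_i' \in \cC$ and the other endpoints outside $B(2^{k_i^* + q_i}) \supseteq \cC$. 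The scale separations $k_i^* > \ell_i$ and $k_i^* + 32d^4 < \ell_j$ ensure that all four vertices and both edges lie in $S_j \setminus S_i$.

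The crux is a short graph-theoretic analysis. Let $G_1$ denote the open subgraph of $S_j \setminus S_i$, set $G_0 := G_1 - \{x_i, x_i^*\} - \{x_i', (x_i')^*\}$, and write $U_1, V_1, U_2, V_2$ for the $G_0$-components of $x_i, x_i^*, x_i', (x_i')^*$, respectively. Pivotality of $\{x_i, x_i^*\}$ says that $G_0 + \{x_i', (x_i')^*\} = G_1 - \{x_i, x_i^*\}$ contains no open $\cC$-to-$\cD$ path; since adding $\{x_i', (x_i')^*\}$ merges $U_2$ with $V_2$, and $U_2$ already contains the $\cC$-vertex $x_i'$, the merged component must be $\cD$-vertex-free, so $V_2$ contains no $\cD$-vertex. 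Symmetrically, $V_1$ is $\cD$-vertex-free. Further, $G_0$ itself has no $\cC$-to-$\cD$ connection (it is contained in $G_1 - \{x_i, x_i^*\}$), so the $\cC$-containing components $U_1, U_2$ are also $\cD$-vertex-free.

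Reconstructing $G_1$ from $G_0$ by re-adding both pivotal edges only merges components from within $\{U_1, V_1, U_2, V_2\}$, all four of which are $\cD$-free; every other component of $G_0$ is unaffected and, by the preceding step, contains at most one of a $\cC$- or a $\cD$-vertex. Hence $G_1$ admits no $\cC$-to-$\cD$ path, contradicting the occurrence of $\{\cC \sa{S_j \setminus S_i} \cD\}$ in $\omega$, which must hold whenever an open edge is pivotal for this event. I do not foresee a substantive obstacle; the only bookkeeping required handles possible coincidences among $U_1, V_1, U_2, V_2$, and these coincidences only enlarge the merged component without affecting the $\cD$-free conclusion.
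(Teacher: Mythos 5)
Your argument is correct, and it genuinely differs from the paper's. The paper's proof is path-based: it fixes a self-avoiding open path $\gamma$ from $\cC$ to $\cD$ inside $S_j \setminus S_i$, notes that pivotality forces $\gamma$ to traverse both $\{x_i, x_i^*\}$ and $\{y_i, y_i^*\}$, assumes WLOG that $\{x_i, x_i^*\}$ is traversed first, and truncates $\gamma$ to the suffix beginning at $y_i \in \partial_{out}\cC \subseteq \cC$. That suffix still witnesses $\{\cC \sa{S_j \setminus S_i} \cD\}$ but omits $\{x_i, x_i^*\}$, contradicting pivotality. Your component-reconstruction argument proves the same deterministic statement $|Y(K)|\le 1$, but by removing both pivotal edges, showing (via pivotality applied once for each edge) that all four affected components are $\cD$-vertex-free, and noting that reinstating the edges only recombines those $\cD$-free pieces — so the full open subgraph would have no $\cC$-to-$\cD$ path, contradicting that the event must occur whenever a pivotal edge is open. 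Your version is a bit longer, but it sidesteps the ordering/truncation bookkeeping (which edge is traversed first, and why the suffix from $y_i$ cannot contain $\{x_i,x_i^*\}$) that the path argument quietly requires. One minor remark: your appeal to ``scale separations'' to conclude that the four vertices and both edges lie in $S_j\setminus S_i$ is superfluous — pivotality for $\{\cC \sa{S_j\setminus S_i}\cD\}$ already forces the pivotal edges to be edges of $S_j\setminus S_i$, since the event is measurable with respect to the open subgraph of that region. (The scale claim is also slightly misstated as written; the outer boundary vertices sit near $\partial B(2^{k_i^*+q_i})$ with $q_i\le 32d^4$, so the clean comparison is $k_i^*+q_i$ against $\ell_i$ and $\ell_j$, though the conclusion does hold for the chosen scale sequences.)
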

As already stated, we delay the proof of this fact until after the rest of our arguments are complete.
We will prove the lower bound of item~\ref{it:twoclust} from the lemma statement via the following estimate:
\begin{equation} \label{eq:vertunif}
\begin{split}
    \prob_{p_c}( \mathcal{C} \sa{S_j \setminus S_i} \mathcal{D} \mid H(\cC)\cap H(\cD)) &\geq \prob_{p_c}(|Y(K)| > 0\mid H(\cC)\cap H(\cD))\\
    &= \sum_{(x_i, x_j)} \prob_{p_c}((x_i, x_j) \in Y(K) \mid H(\cC)\cap H(\cD)).
\end{split}
\end{equation}
In the last step of \eqref{eq:vertunif}, we used Proposition~\ref{prop:Ybern}.  Item~\ref{it:twoclust} will follow immediately once we show
\begin{equation}\label{eq:vertunif2}
\begin{gathered}
    \text{there exists a $K_1 > K_0$ such that, for each $K \in (K_1,\, 2^{k_1 - 128d^4}) $, there exists a $c > 0$ with}\\
    \prob_{p_c}((x_i, x_j) \in Y(K) \mid H(\cC), H(\cD)) \geq c 2^{(2-d)k_j^*} \quad \text{uniformly in $x_i, x_j$}.
\end{gathered}
\end{equation}
(Of course $c$ is uniform in $i$, $j$, $\cC$, and $\cD$ as well). Inserting \eqref{eq:vertunif2} into \eqref{eq:vertunif} and summing over pairs concludes the proof of the lower bound of item~\ref{it:twoclust} from the lemma.

We show that \eqref{eq:vertunif2} holds by doing a modification argument related to those in~\cite{KN}. We define the event
\[\mathcal{E}_1 = H(\cC) \cap H(\cD)\ , \]
and for each collection of vertices $x_i,\, x_j,\, x_i',\, x_j'$ with $x_i \in Reg^{K}(\cC) \cap  \partial_{out}\cC$, with $x_j \in Reg^{K}(\cD) \cap  \partial_{in} \cD$, and with $x_i', x_j' \in B(2^{k_j - q_j}) \setminus B(2^{k_i + q_i})$, we define the events
\begin{equation}
\label{eq:Edef}
\begin{split}
    \mathcal{E}_2(x_i', x_j') &= \left\{ x'_i \xleftrightarrow{S_j \setminus[S_i \cup \cC \cup \cD]} x'_j\right\}, \\
    \mathcal{E}_3(x_i,x_j, x_i') &= \{\fC(x_i),\fC(x_j), \text{ and } \fC(x_i')\text{ are pairwise disjoint}\}\ .
    \end{split}
\end{equation}
In other words, $x_i'$ and $x_j'$ are in the annular region between the ones occupied by $\cC$ and $\cD$. In fact, they will be chosen as vertices near $x_i$ and $x_j$, respectively, namely lying in sets of the form
\begin{align*}
    \widehat B(z; K) := \left[B(2^{k_j - q_j}) \setminus B(2^{k_i + q_i})\right] \cap \left[B(z; K)\setminus B(z;\lfloor K/2\rfloor)\right] 
\end{align*}
for $z\in \{x_i,x_j\}$.

The main step toward establishing \eqref{eq:vertunif2} will be the bound from the next proposition, whose proof we also defer:
\begin{proposition}\label{prop:Eprop}
There exists a $K_1 > K_0$ such that, for all $K \in( K_1, \,  2^{k_1 - 128d^4})$, there is a $c = c(K) > 0$ such that the following holds for all $0 \leq i< j$. For each pair of good spanning sets $\cC$, $\cD$ as in the statement of the lemma, for each $x_i \in Reg^{K}(\cC) \cap  \partial_{out}\cC$ and $x_j \in Reg^{K}(\cD) \cap  \partial_{in} \cD$, there exist $x_i' \in \widehat B(x_i; K)$ and $x_j' \in \widehat B(x_j; K)$ with
    \begin{equation}\label{eqn: lwrbdE1E2E3}
     \mathbb{P}_{p_c}(\mathcal{E}_1 \cap \mathcal{E}_2(x_i, x_j) \cap \mathcal{E}_3(x_i, x_j, x_i')) \geq c 2^{(2-d) k_j} \prob_{p_c}(\mathcal{E}_1)\ .
\end{equation}
\end{proposition}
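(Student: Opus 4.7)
The plan is to implement a conditional modification argument in the spirit of Kozma--Nachmias \cite{KN}. Note that $\mathcal{E}_1 = H(\cC) \cap H(\cD)$ is measurable with respect to edges in $Ann_i^{q_i} \cup Ann_j^{q_j}$, so conditional on $\mathcal{E}_1$ all other edges remain i.i.d.~Bernoulli($p_c$). I will use this to decouple $\mathcal{E}_1$ from $\mathcal{E}_2, \mathcal{E}_3$ and reduce the target to the equivalent conditional lower bound
\[\prob_{p_c}\bigl(\mathcal{E}_2(x_i',x_j')\cap\mathcal{E}_3(x_i,x_j,x_i')\,\big|\,\mathcal{E}_1\bigr)\;\geq\;c\,2^{(2-d)k_j}.\]

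First, I would use the regularity of $x_i$ and $x_j$ to show that most of $\widehat B(x_i; K)$ and $\widehat B(x_j; K)$ lies outside $\fC(x_i) \cup \fC(x_j)$. Applying Definition~\ref{defin:regdef} with $s = K$ yields $|\fC(x_i) \cap B(x_i; K)| < K^4 \log^7 K$ with conditional probability $\geq 1 - e^{-\log^2 K}$, and similarly for $\fC(x_j)$. Since $|\widehat B(x_i; K)| \asymp K^d$ with $d > 6$, the ratio $K^4\log^7 K / K^d$ vanishes as $K \to \infty$, and a first-moment/averaging argument produces deterministic choices $x_i' \in \widehat B(x_i; K)$ and $x_j' \in \widehat B(x_j; K)$ (as functions of $\cC, \cD, x_i, x_j, K$) satisfying
\[\prob_{p_c}\!\left(x_i'\notin \fC(x_i),\; x_j'\notin\fC(x_j)\;\middle|\;\mathcal{E}_1\right)\;\geq\;\tfrac34\]
for all $K \geq K_1$, with $K_1 > K_0$ chosen large enough. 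The upper bound $K < 2^{k_1 - 128 d^4}$ in the hypothesis ensures that $\widehat B(x_i; K)$ lies strictly between the annuli containing $\cC$ and $\cD$, making the selection well-defined.

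Second, I would reveal the portions of $\fC(x_i)$ and $\fC(x_j)$ lying outside $Ann_i^{q_i} \cup Ann_j^{q_j}$ via a BFS exploration from the inner/outer boundaries of $\cC$ and $\cD$, and condition on the event $\mathcal{R}$ that neither exploration reaches $x_i', x_j'$ and that the two explored clusters remain disjoint. Combining the previous step with the upper bound already established in the course of the proof of item~\ref{it:twoclust}, $\prob_{p_c}(x_i \leftrightarrow x_j \mid \mathcal{E}_1)$ is of smaller order than the target scale, so $\prob_{p_c}(\mathcal{R}\mid\mathcal{E}_1) \geq 1/2$ for $K$ large enough. Conditional on $\mathcal{R}\cap\mathcal{E}_1$ the unrevealed edges are i.i.d.~Bernoulli($p_c$), so the restricted two-point lower bound \eqref{eq:restrtwopt}, applied on a box of radius $\asymp 2^{k_j}$ around $x_i'$ (lying inside $S_j \setminus [S_i \cup \cC \cup \cD \cup \fC(x_i) \cup \fC(x_j)]$), yields
\[\prob_{p_c}\bigl(x_i' \xleftrightarrow{S_j\setminus[S_i\cup\cC\cup\cD\cup\fC(x_i)\cup\fC(x_j)]} x_j' \,\big|\,\mathcal{R},\mathcal{E}_1\bigr) \;\geq\; c\,|x_i'-x_j'|^{2-d}\;\geq\;c'\,2^{(2-d)k_j}.\]
Multiplying the two conditional probabilities gives the required bound.

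The hard part will be the restricted two-point lower bound in the second step --- specifically, showing that excluding the random sets $\fC(x_i), \fC(x_j)$ along with the deterministic sets $\cC, \cD, S_i$ from the allowed connection region preserves the $|x_i'-x_j'|^{2-d}$ scaling up to a uniform constant. A naive union bound over excluded vertices would lose too much, so I would instead adapt the path-surgery and second-moment techniques developed in \cite{CH,CHS}, exploiting the fact that the total volume of excluded sites is of polynomial order in $2^{k_j^*}$ and $K^4\log^7 K$, which is much smaller than $|x_i'-x_j'|^d \asymp 2^{dk_j}$ under the constraints imposed on $K$ and on the scale sequence.
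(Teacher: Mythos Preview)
Your proposal diverges from the paper's argument at the crucial step and, as you yourself flag, leaves a genuine gap there. The paper does \emph{not} condition on a revealed realization of $\fC(x_i)\cup\fC(x_j)$ and then seek a quenched restricted two-point lower bound; no such bound is available for connections avoiding an arbitrary (random) set, and \eqref{eq:restrtwopt} applies only to connections inside a deterministic box. The hope of ``adapting the path-surgery and second-moment techniques of \cite{CH,CHS}'' is not a plan: those arguments also establish lower bounds only for deterministic restrictions, and there is no robustness statement saying that removing a set of controlled volume preserves the $|x-y|^{2-d}$ scaling pointwise in the removed set. In short, after your exploration step the remaining problem is essentially as hard as the original one.

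The paper circumvents this entirely by working in an \emph{annealed} fashion via inclusion--exclusion and the BK inequality. First it proves the intermediate bound $\prob_{p_c}(\mathcal{E}_2(x_i',x_j')\mid\mathcal{E}_1)\geq c\,2^{(2-d)k_j}$ by starting from the unrestricted two-point lower bound $\prob_{p_c}(x_i'\leftrightarrow x_j')\geq c\,2^{(2-d)k_j}$ and \emph{subtracting} the probabilities of the bad events that the connecting path meets $\cC$, $\cD$, $S_i$, or $\partial S_j$; each subtraction is an upper bound handled by BK plus the regularity of $x_i$ (this is where the factor $K^{6-d}\log^7 K$ arises) or by the scale separation. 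The passage from $\mathcal{E}_2$ to $\mathcal{E}_2\cap\mathcal{E}_3$ is done the same way: one subtracts the three events $\{x_i\leftrightarrow x_i'\}$, $\{x_j\leftrightarrow x_i'\}$, and $\{x_i\leftrightarrow x_j\}\setminus\{x_i\leftrightarrow x_i'\}$ intersected with $\mathcal{E}_2$, and each is bounded diagrammatically using BK and the regularity assumption on $x_i,x_j$. The pigeonhole selection of $x_i'$ (and of $x_j'$) is performed at exactly the point where it is needed to control the near-diagonal part of a sum of the form $\sum_z \prob_{p_c}(z\leftrightarrow x_i\mid H(\cC))\,|z-x_i'|^{2-d}$, not upfront as in your outline.

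A smaller point: your claim that $\prob_{p_c}(x_i\leftrightarrow x_j\mid\mathcal{E}_1)$ is ``of smaller order than the target scale'' is not correct as stated---using the boundary bounds in Definition~\ref{defin:Kgood} one gets only $o(1)$ in $k_j$, not $O(2^{(2-d)k_j})$. This happens to be enough for your purpose (making $\prob(\mathcal{R}\mid\mathcal{E}_1)\geq 1/2$), but note that in the paper's version this term is intersected with $\mathcal{E}_2$ and bounded via BK, which supplies the extra factor $2^{(2-d)k_j}$ needed to make it genuinely lower order.
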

The presence of the vertices $x_i', x_j'$ is related to the reason for the assumed upper bound on $k_1$: we require at least a small amount of distance between vertices to make various error terms related to the triangle diagram small.
The argument that shows \eqref{eq:vertunif2}, and hence completes the proof of the lemma, involves a (by now somewhat standard) modification argument, which we briefly sketch now.

Considering an outcome $\omega \in \mathcal{E}_1 \cap \mathcal{E}_2(x_i, x_j) \cap \mathcal{E}_3(x_i, x_j, x_i')$, we note that in $\omega$, $\cC$ (resp.~$\cD$) is a subset of the open cluster $\fC(x_i)$ (resp.~$\fC(x_j)$). By the fact that $\omega \in \mathcal{E}_3(x_i, x_j, x_i')$, then, we have $\cC \not \lra \cD$ in $\omega$. By the fact that $\omega \in \mathcal{E}_2(x_i, x_j)$, there is an open path $\gamma$ from $x_i'$ to $x_j'$ with $\gamma \subseteq S_j \setminus \left[S_i \cup \fC(x_i) \cup \fC(x_j) \right]$.

We can therefore define a map 
\begin{equation}\label{eq:modifydef}
 \varphi: \, \mathcal{E}_1 \cap \mathcal{E}_2(x_i, x_j) \cap \mathcal{E}_3(x_i, x_j, x_i') \to \left\{(x_i, x_j) \in Y(K) \right\} \cap H(\cC) \cap H(\cD)
 \end{equation}
as follows. Letting $\gamma$ be as in the previous paragraph, the new outcome $\varphi(\omega)$ is produced from $\omega$ by closing all edges in $\widehat B(x_i; 2K)$ and $\widehat B(x_j; 2K)$ which do not intersect $\gamma$ and which are not open edges of $\cC$ or $\cD$. We then open edges along a deterministically chosen self-avoiding path from $x_i$ to $\gamma$ lying entirely in $\widehat B(x_i; 2K)$ and in a deterministically chosen path from  $x_j$ to $\gamma$ lying entirely in $\widehat B(x_j; 2K)$, with no edge of either path (except the first edge of each path) having an endpoint in $\cC$ or $\cD$.

By choosing the paths to end at their first intersections with $\gamma$, and to begin respectively with the edges $\{x_i, x_i^*\}$ and $\{x_j, x_j^*\}$, it follows that these two edges are open and pivotal for $\{\cC \sa{S_j \setminus S_i} \cD\}$ in $\varphi(\gamma)$. Because no edges adjacent to $\cC$ or $\cD$ except $\{x_i, x_i^*\}$ and $\{x_j, x_j^*\}$ are modified in this process, $\varphi(\omega) \in H(\cC) \cap H(\cD)$, and of course $x_i \in Reg^{(q_\cC, K)}(\cC)$ still holds (with a similar remark for $x_j$). In particular, $\varphi$ has the codomain claimed in its definition~\eqref{eq:modifydef}, and \eqref{eqn: lwrbdE1E2E3} will follow once we provide an appropriate lower bound on the probability of its image. 

On the other hand, $\varphi$ is at most $2^{|\edges(\widehat B(x_i; 2K))| + |\edges(\widehat B(x_i; 2K))|}$-to-one, and so we can lower bound the probability of its image using the multiple-valued map principle (see e.g.~\cite[Lemma 5.1]{DCGR}):
\[\prob_{p_c}\left(\varphi\left(\mathcal{E}_1 \cap \mathcal{E}_2(x_i, x_j) \cap \mathcal{E}_3(x_i, x_j, x_i')\right)\right) \geq c \prob_{p_c}\left(\mathcal{E}_1 \cap \mathcal{E}_2(x_i, x_j) \cap \mathcal{E}_3(x_i, x_j, x_i')\right)\ ,\]
where $c$ is uniform in all parameters except $K$. Applying \eqref{eqn: lwrbdE1E2E3} and recalling the codomain of $\varphi$ from \eqref{eq:modifydef} yields
\[\prob_{p_c}\left( \left\{(x_i, x_j) \in Y(K) \right\} \cap H(\cC) \cap H(\cD) \right)  \geq c 2^{(2-d) k_j} \prob_{p_c}(\mathcal{E}_1) = c 2^{(2-d) k_j} \prob_{p_c}(H(\cC) \cap H(\cD)),\]
completing the proof of \eqref{eq:vertunif2} and hence the proof of item~\ref{it:twoclust} from the lemma given Propositions~\ref{prop:Ybern} and \ref{prop:Eprop}. 
\end{proof}
We will now conclude the proof of the item by establishing these two propositions.

\begin{proof}[Proof of Proposition~\ref{prop:Ybern}]
We argue by contradiction, considering an outcome $\omega \in H(\cC) \cap H(\cD)$. We suppose that in $\omega$, the set $Y(K)$ contained two distinct pairs $(x_i, x_j)$ and $(y_i, y_j)$, and show that $\omega$ has contradictory properties. By the distinctness of the pairs under consideration, either $x_i \neq y_i$ or $x_j \neq y_j$. We argue that $\omega$ has contradictory properties in the case that $x_i \neq y_i$; the other case is similar.

In $\omega$, there exists a self-avoiding open path $\gamma\subseteq [S_j \setminus S_i]$ from $\cC$ to $\cD$, and by pivotality, it must contain both $\{x_i, x_i^*\}$ and $\{y_i, y_i^*\}$. Without loss of generality, we assume $\{x_i, x_i^*\}$ appears first in $\gamma$. We delete the initial segment of $\gamma$ up to, but not including, the appearance of the vertex $y_i$, producing a new open path $\gamma'$. Since $\gamma'$ is a subsegment of $\gamma$, it lies entirely in $S_j \setminus S_i$. In particular, in $\omega$, it witnesses the event $\{\mathcal{C} \sa{S_j \setminus S_i} \mathcal{D}\}$. 

However, $\{x_i, x_i^*\} \notin \gamma'$. This contradicts the definition of pivotality, showing that $\omega$ has contradictory properties as claimed. This completes the proof of item~\ref{it:twoclust} from Lemma~\ref{lem:glue} assuming the veracity of Propositions~\ref{prop:Ybern} and \ref{prop:Eprop}.
\end{proof}

The final ingredient is the somewhat lengthy proof of Proposition~\ref{prop:Eprop}.

\begin{proof}[Proof of Proposition~\ref{prop:Eprop}] 
In what follows, we frequently write ``for all $x_i$'' to mean ``for all $x_i \in Reg^{K}(\cC)$'' and similarly with $x_j$ to avoid excessive clutter; we will also say that bounds are simply ``uniform'' instead of recalling that the uniformity is over choices of $i$, $j$, $\cC$, $\cD$, and $(x_i, x_j)$. The parameter $K$ plays a special role, and so we continue to note $K$-dependence where applicable.

The first step toward proving Proposition~\ref{prop:Eprop} is the uniform lower bound
\begin{equation}
\begin{gathered}
    \text{there exists a $K' > K_0$ such that, for all $K \in (K', 2^{k_1-128d^4})$, there is a $c = c(K) > 0$}\\
   \text{such that} \quad \mathbb{P}_{p_c}(\mathcal{E}_1 \cap \mathcal{E}_2(x_i',x_j')) \geq c 2^{(2-d) k_j} \prob_{p_c}(\mathcal{E}_1)\ .
     \label{lem4.3}
\end{gathered}
 \end{equation}
To prove this, we first re-express the conditional probability:
\begin{align*}
    \mathbb{P}_{p_c}(\mathcal{E}_2(x_i',x_j') \mid \mathcal{E}_1) &= \mathbb{P}_{p_c}\left(x_i' \sa{S_j \setminus [S_i \cup \cC \cup \cD]} x_j' \mid \mathcal{E}_1\right)\\
    &= \mathbb{P}_{p_c}\left(x_i' \sa{S_j \setminus [S_i \cup \cC \cup \cD]} x_j'\right),
\end{align*}
since $\mathcal{E}_1$ depends only on the status of edges incident to $\cC$ and $\cD$, and the connection event does not depend on these edges.

We lower-bound the probability from the last display using a union bound:
\begin{align}
    \mathbb{P}_{p_c}(\mathcal{E}_2(x_i',x_j') \mid \mathcal{E}_1) &\geq \mathbb{P}_{p_c}(x'_i \leftrightarrow x'_j) - \prob_{p_c}(x_i' \lra x_j' \text{ via an open path intersecting } \cC \cup \cD \cup S_i \cup S_j^c)\nonumber\\
    \label{eq:outofbox}
    \begin{split}
    &\geq \mathbb{P}_{p_c}(x'_i \leftrightarrow x'_j) - \sum_{z \in \cC \cup \cD}\prob_{p_c}(\{x_i' \lra z\} \circ \{x_j' \lra z\})\\
    &\quad - \prob_{p_c}(\{x_i' \lra S_i\} \circ \{x_j' \lra S_i\}) - \sum_{z \in \partial S_j} \prob_{p_c}(\{x_i' \lra z\} \circ \{x_j' \lra z\})\ ,
    \end{split}
    \end{align}
    The BK inequality applied to the last inequality finally shows
    \begin{equation}
    \label{eq:E2terms}
    \begin{split}
    \mathbb{P}_{p_c}(\mathcal{E}_2(x_i',x_j') \mid \mathcal{E}_1)
    &\geq \mathbb{P}_{p_c}(x'_i \leftrightarrow x'_j) - \sum_{z \in \cC \cup \cD}\prob_{p_c}(x_i' \lra z) \prob_{p_c}(x_j' \lra z)\\
    &\quad - \prob_{p_c}(x_i' \lra S_i) \prob_{p_c}(x_j' \lra S_i) -  \sum_{z \in \partial S_j} \prob_{p_c}(x_i' \lra z) \prob_{p_c}(x_j' \lra z)\ .\end{split}
\end{equation}
We will therefore prove \eqref{lem4.3} by showing that the right-hand side of \eqref{eq:E2terms} is bounded below by $c 2^{(2-d) k_j}$ for some uniform constant $c > 0$.

We begin by lower-bounding the first term on the right side of \eqref{eq:E2terms}. Uniformly in $x_i'$ and $x_j'$, we have $\|x_i' - x_j'\|_\infty \leq 2^{k_j + 2}$. This ensures the existence of a uniform constant $c > 0$ such that
\begin{equation} \label{eq:E2terms1}
    \mathbb{P}_{p_c}(x'_i \leftrightarrow x'_j) \geq c 2^{(2-d) k_j}\ ,
\end{equation}
which is the order of the claimed lower bound for the entire right-hand side of \eqref{eq:E2terms}. We show that the negative terms are smaller in magnitude than the bound from \eqref{eq:E2terms1}, at least for appropriately chosen $K$ and appropriately chosen (as a function of $x_i,\, x_j$) vertices $x_i'$, $x_j'$.

We begin by bounding the first negative term of \eqref{eq:E2terms}, considering first the sum over $z \in \cC$. We note that as long as $K \leq 2^{k_j - 128d^4}$, such $z$ must satisfy $\|z - x_j'\|_{\infty} \geq 2^{k_j - 64 d^4 - 1}$. Using \eqref{2pt} shows that uniformly in such $K$ and $x_i, x_j, x_i', x_j'$,
\begin{align}
   \nonumber
    \sum_{z \in \cC}\prob_{p_c}(x_i' \lra z) \prob_{p_c}(x_j' \lra z) &\leq C 2^{(2-d) k_j} \sum_{z \in \cC}|x_i' - z|^{2-d} \\
    &\leq  C 2^{(2-d) k_j} \sum_{r = \lfloor\log_2 K-1\rfloor}^\infty 2^{(2-d)r} \left|\left\{z \in \cC: \, 2^{r} \leq \|z-x_i'\|_{\infty} < 2^{r+1} \right\}\right|\nonumber\\
    &\leq  C 2^{(2-d) k_j} \sum_{r = \lfloor\log_2 K - 1\rfloor}^\infty 2^{(2-d)r} \left|\left\{z \in \cC: \, \|z-x_i\|_{\infty} < 2^{r+4} \right\}\right|\ .  \label{eq:E2terms2a}
\end{align} 
In the last line, we used the fact that $\|x_i - x_i'\|_{\infty} \leq K$.
The sum on the right is bounded using the observation \eqref{eq:regcons}, that a regular vertex like $x_i$ cannot have $\fC_{Ann_i^{q_i}}(x_i)$ too large, yielding
\begin{equation}\label{eq:firstterm}
    \sum_{z \in \cC}\prob_{p_c}(x_i' \lra z) \prob_{p_c}(x_j' \lra z) \leq C 2^{(2-d) k_j} \sum_{r = \lfloor\log_2 K\rfloor}^\infty 2^{(2-d)r} [r^7 2^{4 r}] \leq C 2^{(2-d) k_j} K^{6-d} \log^7 K\ .
\end{equation}
A similar series of estimates shows an identical bound for the portion of the first negative term of \eqref{eq:E2terms} corresponding to the sum over $z \in \cD$. 

The quantity on the right-hand side of \eqref{eq:firstterm} will serve as our bound on the first negative term of \eqref{eq:E2terms}. The other terms are bounded using the definitions of the scale sequences from directly above \eqref{eq:annidef} and the estimate \eqref{2pt}, and here we crucially use the rapid growth of these scale sequences. Indeed, a union bound for the second negative term of \eqref{eq:E2terms} with $d \geq 7$ gives 
\begin{align}
     \prob_{p_c}(x_i' \lra S_i) \prob_{p_c}(x_j' \lra S_i) &\leq \sum_{z_1, z_2 \in S_i} \prob_{p_c}(x_i' \lra z_1) \prob_{p_c}(x_j' \lra z_2)\nonumber\\
    &\leq C |S_i|^2  2^{(2-d) k_i^*} 2^{(2-d) k_j} \leq C 2^{2d k_i}2^{(2-d) k_i^*} 2^{(2-d) k_j}\nonumber\\
    &\leq C 2^{(2d + 4d^2 - 2d^3) k_i} 2^{(2-d) k_j} \leq C 2^{-8 d^2 k_i} 2^{(2-d) k_j}\ .\label{eq:secondterm}
\end{align}
The final term of \eqref{eq:E2terms} satisfies
\begin{align}
\label{eq:thirdterm}
    \sum_{z \in \partial S_j} \prob_{p_c}(x_i' \lra z) \prob_{p_c}(x_j' \lra z) &\leq C 2^{(4-d) \ell_j} \leq C 2^{-3d k_j}\ . 
\end{align}

Combining \eqref{eq:E2terms1}, \eqref{eq:firstterm}, \eqref{eq:secondterm}, and \eqref{eq:thirdterm}, we see that the right-hand side of \eqref{eq:E2terms} is at least
\[ (c - C_1 K^{6-d} \log^7 K - C_2 2^{-2d k_1}) 2^{(2-d) k_j},\]
where we recall $k_1$ from the definition of the scale sequences above \eqref{eq:annidef}. The claimed estimate \eqref{lem4.3} is an immediate consequence of the last display.

It now remains to prove the proposition using \eqref{lem4.3}. We first arrive at an expression with error terms similar to \eqref{eq:E2terms}, then bound the resulting terms. If $\mathcal{E}_1 \cap \mathcal{E}_2(x_i, x_j)$ occurs but $\mathcal{E}_3(x_i, x_j, x_i')$ does not, then in particular either 1) there is an open connection from the set $\{x_i, x_j\}$ to $x_i'$, or 2) there is an open connection from $x_i$ to $x_j$ and no connection from $x_i$ to $x_i'$. In the former case, we can decompose the open connection based on which of $\cC$ and $\cD$ it intersects last. A union  bound yields
\begin{equation} \label{eq:E3terms}
\begin{split}
    \prob_{p_c}(\mathcal{E}_2(x_i', x_j') \cap &\mathcal{E}_3(x_i, x_j, x_i') \mid \mathcal{E}_1) \geq\\
    &\prob_{p_c}(\mathcal{E}_2(x_i', x_j') \mid \mathcal{E}_1)
    - \prob_{p_c}(\mathcal{E}_2(x_i', x_j') \cap \{x_i \sa{\Z^d \setminus \cD} x_i'\} \mid \mathcal{E}_1)\\
    -  ~&\prob_{p_c}(\mathcal{E}_2(x_i', x_j') \cap \{x_j \sa{\Z^d \setminus \cC} x_i'\} \mid \mathcal{E}_1)\\
    - ~&\prob_{p_c}(\mathcal{E}_2(x_i', x_j') \cap \{x_i \lra x_j\} \setminus \{x_i \lra x_i'\} \mid \mathcal{E}_1)\ .
    \end{split}
\end{equation}
Each of these negative terms may be represented diagrammatically. See Figures \ref{fig:one}, \ref{fig:two}, \ref{fig:three}, where connections in addition to those in $\mathcal{E}_2$ appear in red. 
The first two negative terms of \eqref{eq:E3terms} will be bounded using regularity in a typical way, while the last requires other properties from Definition~\ref{defin:Kgood}.

\begin{figure}[htbp]  
  \centering
  \includegraphics[width=0.65\textwidth]{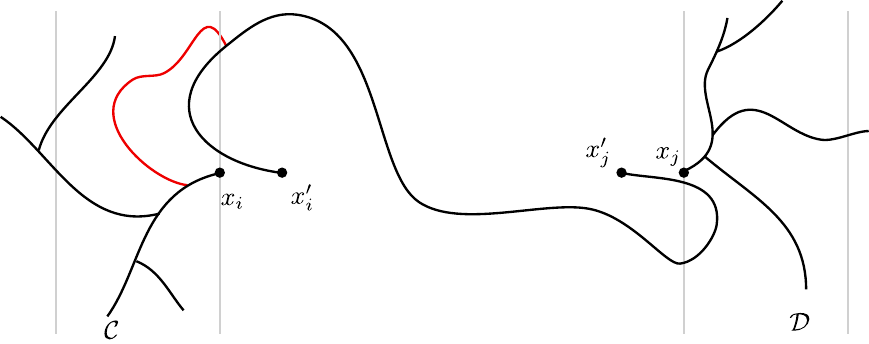}
  \caption{The event $\mathcal{E}_2(x_i', x_j') \cap \{x_i \sa{\Z^d \setminus \cD} x_i'\}$.}
  \label{fig:one}
\end{figure}

\begin{figure}[htbp]  
  \centering
  \includegraphics[width=0.65\textwidth]{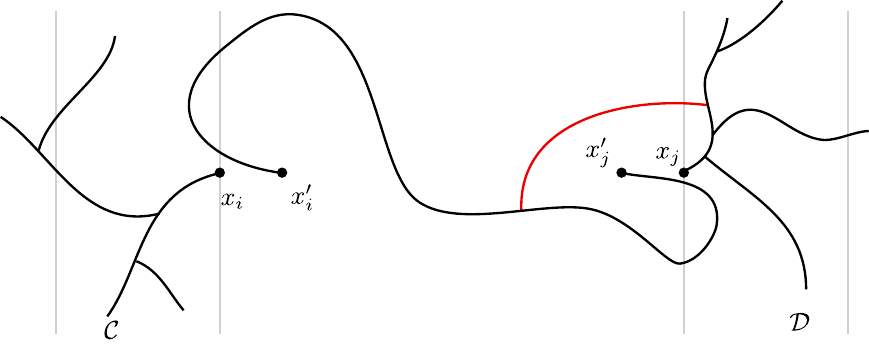}
  \caption{The event $\mathcal{E}_2(x_i', x_j') \cap \{x_j \sa{\Z^d \setminus \cC} x_i'\}$.}
  \label{fig:two}
\end{figure}

\begin{figure}[htbp]  
  \centering
  \includegraphics[width=0.65\textwidth]{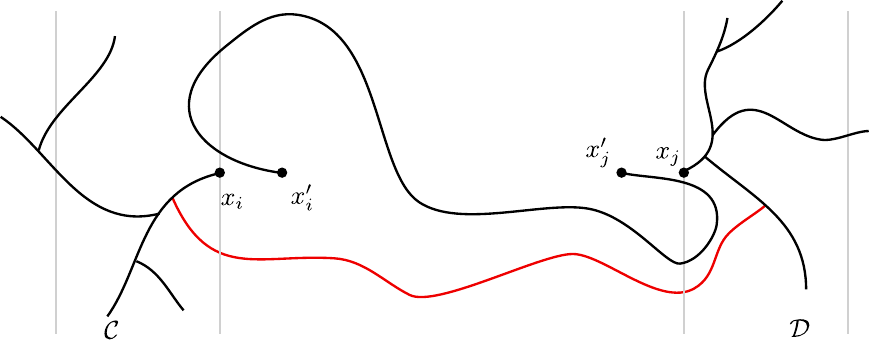}
  \caption{The event $\mathcal{E}_2(x_i', x_j') \cap \{x_i \lra x_j\} \setminus \{x_i \lra x_i'\}$.}
  \label{fig:three}
\end{figure}

We describe first the method for bounding the first negative term of \eqref{eq:E3terms}. When $\mathcal{E}_1 \cap \mathcal{E}_2(x_i, x_j) \cap \{x_i \sa{\Z^d \setminus \cD} x_i'\}$ occurs, there exists an open connection $\gamma$ from $x_i'$ to $x_j'$ avoiding $\cC \cup \cD$, and there is some open connection from $x_i$ to $x_i'$ which avoids $\cD$ and necessarily intersects $\gamma$. (The connection from $x_i$ to $x_i'$ may use open edges of $\cC$.) Choosing an appropriate vertex $z \in \gamma$, we find disjoint witnesses showing the following set inclusion:
\begin{equation} \label{eq:E3intermed}
    \mathcal{E}_1 \cap \mathcal{E}_2(x_i, x_j) \cap \{x_i \sa{\Z^d \setminus \cD} x_i'\} \subseteq \bigcup_{z} \left[\mathcal{E}_1 \cap \{z \sa{\Z^d \setminus \cD} x_i\}\right] \circ \{z \lra x_i'\} \circ \{z \lra x_j'\}  \ .
\end{equation}

Applying the BK inequality, we see the probability of the event on the left side of \eqref{eq:E3intermed} satisfies
\begin{equation*} 
    \prob_{p_c}\left(\mathcal{E}_1 \cap \mathcal{E}_2(x_i, x_j) \cap \{x_i \sa{\Z^d \setminus \cD} x_i'\}\right) \leq C \sum_z \prob_{p_c}\left( \mathcal{E}_1 \cap \{z \sa{\Z^d \setminus \cD} x_i\}\right) |z - x_i'|^{2-d} |z-x_j'|^{2-d}\ ,
\end{equation*}
and using independence of the events $\{x_i \sa{\Z^d \setminus \cD} x_i'\}$ and $H(\cD)$ yields
\begin{align}
     \nonumber\prob_{p_c}\left(\left.\mathcal{E}_2(x_i, x_j) \cap \{x_i \sa{\Z^d \setminus \cD} x_i'\} \right| \mathcal{E}_1\right) &\leq C\sum_z \prob_{p_c}\left(\left. z \sa{\Z^d \setminus \cD} x_i \right| \mathcal{E}_1 \right) |z - x_i'|^{2-d} |z-x_j'|^{2-d}\\
       \label{eq:E3intermed2} &\leq C\sum_z \prob_{p_c}\left(z \lra x_i \mid H(\cC)\right) |z - x_i'|^{2-d} |z-x_j'|^{2-d}\ .
\end{align}

The terms of \eqref{eq:E3intermed2} corresponding to $z \notin B(2^{k_j- 1})$ may be bounded by controlling the first two factors uniformly in $x$ and good spanning sets $\cC$. For $\{x_i \lra z\} \cap H(\cC)$ to occur, there must be some $y \in \partial \cC$ satisfying $y \sa{\Z^d \setminus \cC} z$; taking a union bound over such $y$ and using $\min\{\|y - z\|_\infty, \|x_i - z\|_\infty\} \geq \|z\|_\infty / 2$ yields
\begin{align}
    \sum_{z \notin B\left(2^{k_j -1}\right)} \prob_{p_c}\left(z \lra x_i \mid H(\cC)\right) |z - x_i'|^{2-d} |z-x_j'|^{2-d} \leq  \left[\left|\partial_{out}\mathcal{C} \right| + \left| \partial_{in}\mathcal{C} \right| \right]  \sum_{z\notin B\left(2^{k_j -1}\right)} |z|^{4-2d}|z-x_j'|^{2-d}\ ,\label{eq:nearz0}
    \end{align}
    and then uniformly bounding $|z|^{2-d} \leq C 2^{(2-d) k_j}$, and using the cardinality bounds on $\partial_{out}\mathcal{C}, \, \partial_{in}\mathcal{C}$ from Definition~\ref{defin:Kgood}  gives
    \begin{align}
           \eqref{eq:nearz0}
           &\leq C  2^{9 k_i^*/4 - (2-d) k_j}\left[ \sum_{z \notin B(2^{k_j - 1})}|z|^{2-d}|z-x_j'|^{2-d}  \right] \\
   &\leq C2^{9 k_i^*/4 - (6-2d) k_j} \leq C2^{(1-d)k_j}\ , \label{eq:nearz}
\end{align} 
where in the last line we used \eqref{eqn: convolution} and the definition of the scale sequences above \eqref{eq:annidef}. 

To control the terms of \eqref{eq:E3intermed2} when $z \in B(2^{k_j- 1})$, we bound $\|z-x_j'\|_{\infty} \geq 2^{k_j-1}$ to see
\begin{align}\label{eq:torand}
   \sum_{z \in B\left(2^{k_j -1}\right)} \prob_{p_c}\left(z \lra x_i \mid H(\cC)\right) |z - x_i'|^{2-d} |z-x_j'|^{2-d} &\leq C 2^{(2-d)k_j}  \sum_{z} \prob_{p_c}\left(z \lra x_i \mid H(\cC)\right) |z - x_i'|^{2-d}\ .
\end{align}
The remaining sum can be bounded by similar means to those in \cite{KN} by a further careful decomposition over scales. The portion of the sum on the right-hand side of \eqref{eq:torand} corresponding to $z \notin B(x_i; 2K)$ is simpler, using Definition~\ref{defin:regdef}:
\begin{align}
    \sum_{z \notin B(x_i; 2K)} \prob_{p_c}\left(z \lra x_i \mid H(\cC)\right) |z - x_i'|^{2-d} &\leq C \sum_{r = \lfloor \log_2 K \rfloor}^\infty 2^{(2-d)r} \mathbb{E}\left[ \left|\fC(x_i)\cap B(x_i,2^{r}) \right|\  \mid H(\cC)\right]\nonumber\\
    &\leq C \sum_{r = \lfloor \log_2 K \rfloor}^\infty 2^{(6-d)r}r^7 \leq C K^{6-d} \log^7 K\ .\label{eq:farz}
\end{align}

For the other terms of \eqref{eq:torand}, we sum further over $x_i' \in \widehat B(x_i; K)$:
\begin{align}
    \sum_{x_i' \in \widehat B(x_i; K)}\sum_{\substack{z \in B(x_i; 2K)}} \prob_{p_c}\left(z \lra x_i \mid H(\cC)\right) |z - x_i'|^{2-d} &\leq C K^2 \sum_{\substack{z \in B(x_i; 2K)}} \prob_{p_c}\left(z \lra x_i \mid H(\cC)\right) \nonumber\\
    &\leq  C K^6 \log^7 K\ .\label{eq:rand2}
\end{align}
In particular, the right-hand side of \eqref{eq:rand2} implies 
\begin{equation}
    \label{eq:rand3}
    \begin{gathered}
    \text{there is an $x_i' \in \widehat B(x_i; K)$ such that }\\
    \sum_{\substack{z \in B(x_i; 2K)}} \prob_{p_c}\left(z \lra x_i \mid H(\cC)\right) |z - x_i'|^{2-d} \leq C K^{6-d} \log^7 K\ .
    \end{gathered}
\end{equation}

Inserting \eqref{eq:farz} and \eqref{eq:rand3} into \eqref{eq:torand}, and then combining the result with \eqref{eq:nearz} allows us to bound the left-hand side of \eqref{eq:E3intermed2}. This provides an upper bound for the first negative term of \eqref{eq:E3terms}:
\begin{equation}
    \label{eq:E3intermedbd}
    \begin{gathered}
    \text{for each $K \in (K_0, 2^{k_1-128d^4})$, there exists $x_i' \in \widehat B(x_i; K)$ satisfying}\\
    \prob_{p_c}\left( \left. \mathcal{E}_2(x_i', x_j') \cap \left\{x_i \sa{\Z^d \setminus \cD} x_i'\right\}\right| \mathcal{E}_1\right) \leq C 2^{(2-d)k_j} K^{6-d} \log^7 K\ .
    \end{gathered}
\end{equation}
As usual, the constant $C$ is uniform in $K$ and in the other parameters $i$, $j$, $x_i, x_j, x_j'$. 

The second negative term can be bounded via a similar series of estimates. Because the argument is essentially the same as the one leading to \eqref{eq:E3intermedbd}, we do not give the details; we record the result here:
\begin{equation}
 \label{eq:E3intermedbd2}
    \begin{gathered}
    \text{for each $K \in (K_0, 2^{k_1-128d^4})$, there exists $x_j' \in \widehat B(x_j; K)$ satisfying}\\
    \prob_{p_c}\left(\left.\mathcal{E}_2(x_i, x_j) \cap \left\{x_j \sa{\Z^d \setminus \cC} x_i'\right\} \right| \mathcal{E}_1\right) \leq C 2^{(2-d)k_j} K^{6-d} \log^7 K\ ,
    \end{gathered}
\end{equation}
uniformly in $K$ and in the other parameters $i$, $j$, $x_i, x_j, x_i'$.

We finally bound the last negative term of \eqref{eq:E3terms}. Given an outcome in the event
\begin{equation}
    \label{eq:lastevent}
    \mathcal{E}_2(x_i', x_j') \cap \left(\{x_i \lra x_j\} \setminus \{x_i \lra x_i'\}\right) \cap \mathcal{E}_1\ ,
\end{equation} 
we can find edge-disjoint witnesses for the three events 
\[\mathcal{E}_1,\quad \{x_i' \lra x_j'\}, \quad \text{and}\quad \{\exists \text{ an open path from a vertex of $\cC$ to a vertex of $\cD$}\}.\]
In particular, using the BK-Reimer inequality and Lemma~\ref{lem:nofurther}, we see that
\begin{align}
    \prob_{p_c}(\eqref{eq:lastevent}) &\leq C \prob_{p_c}(\mathcal{E}_1) 2^{(2-d)k_j} \prob_{p_c}\left(\left[ \partial_{in} \cC \cup \partial_{out} \cC \right] \lra \left[ \partial_{in} \cD \cup \partial_{out} \cD \right] \right)\nonumber\\
    &\leq C \prob_{p_c}(\mathcal{E}_1) 2^{(2-d)k_j} \left[\left|\partial_{in} \cC\right|+ \left|\partial_{out} \cC\right| \right] \left[2^{(2-d) k_j^*}\left|\partial_{in} \cD\right| +\left|\partial_{out} \cD\right| 2^{(2-d) k_j}\right]\nonumber\\
    &\leq C \prob_{p_c}(\mathcal{E}_1) 2^{(2-d)k_j} \left[2^{(9/4)k_i^*} \right] \left[2^{(9/4 + 2 - d) k_j}+ 2^{(9/4 + 2 - d) k_j^*} \right]\nonumber\\
    &\leq C \prob_{p_c}(\mathcal{E}_1) 2^{2(2-d)k_j} \left[ 2^{(9/4)k_i^* } \right] \left[2^{(9/4) k_j} \right]\nonumber\\
    &\leq C \prob_{p_c}(\mathcal{E}_1) 2^{(13/2 -d)k_j}2^{(2 -d)k_j}\ .\label{eq:lastfrac}
\end{align}
In the last line, we used the trivial inequality $k_i^* \leq k_j$. 

Dividing both sides of \eqref{eq:lastfrac} by $\prob_{p_c}(\mathcal{E}_1)$ and noting that $d \geq 7$, we find the following bound for the third negative term of \eqref{eq:E3terms}:
\begin{equation}\label{eq:E3intermedbd3}
    \begin{gathered}
    \text{for each $K \in (K_0, 2^{k_1-128d^4})$ and each pair $x_i'$ and $x_j'$, we have}\\
    \prob_{p_c}(\mathcal{E}_2(x_i, x_j) \cap \{x_i \lra x_j\} \setminus \{x_i \lra x_i'\} \mid \mathcal{E}_1) \leq C 2^{(3/2 -d)k_j}\ .
    \end{gathered}
\end{equation}
With all these estimates accomplished, it remains only to gather the resulting bounds. 

Applying \eqref{lem4.3} to lower-bound the positive term of \eqref{eq:E3terms}, then using \eqref{eq:E3intermedbd}, \eqref{eq:E3intermedbd2}, and \eqref{eq:E3intermedbd3} to control the negative ones, we see there exist constants $c, C > 0$ and $K' > K_0$ such that, for all $K \in (K', 2^{k_1-128d^4})$, there exist choices of $x_i'$ and $x_j'$ such that, uniformly in all other parameters, we have
\begin{align*}
    \prob_{p_c}(\mathcal{E}_2(x_i, x_j) \cap \mathcal{E}_3(x_i, x_j, x_i') \mid \mathcal{E}_1) \geq c 2^{(2-d)k_j}\left(1 - C K^{6-d} \log^7 K - C 2^{-k_1/2} \right)\ .
\end{align*}
Noting that the quantity in parentheses is uniformly (in $i, j, x_i, x_j$) positive for all large $K$ and $k_1$ completes the proof of the proposition. 
\end{proof}

We have now proved the first item from the statement of Lemma~\ref{lem:glue} in detail. As promised, we sketch the modifications to the above proof which are necessary to prove the remaining items.
\begin{proof}[Sketch of proof of items \ref{it:knversion} -- \ref{it:toorig} from Lemma~\ref{lem:glue}]
    We describe the necessary changes to the above proof in each case, proceeding in order.
    
    {\bf Item~\ref{it:knversion}.} Here we follow a very similar, but simpler line of argument to the one given in the case of Item~\ref{it:twoclust}. We begin again with the proof of the upper bound. The line of reasoning that led to \eqref{eq:useddist} produces an analogous expression here:
    \begin{align*}
        \prob_{p_c}\left(\left\{\cC \xleftrightarrow{B(2\|x\|_{\infty}) \setminus S_i } x\right\} \cap H(\cC) \right)
     &\leq \prob_{p_c}(\{\cC \lra x\}  \cap H(\cC) )\\
      &\leq C|x|^{2-d} |\partial_{out}\cC| \prob_{p_c}(H(\mathcal{C}))\ .
    \end{align*} 
    The chief difference, as visible in the last line of the above display, is that no union bound over the set $\cD$ is necessary. The upper bound of item~\ref{it:knversion} follows.

    The lower bound of item~\ref{it:knversion} uses a modification argument very close to that of item~\ref{it:twoclust}. 
    In particular, the goal is to bound the probability of the event that the following random set is nonempty:
    \[ Y(K) := \left\{x_i: \, 
      x_i \in Reg^{K}(\cC) \cap  \partial_{out} \cC,\ 
      \{x_i, x_i^*\}  \text{ is open and pivotal for } \mathcal{C}\xleftrightarrow{B(2\|x\|_{\infty}) \setminus S_i } x
    \right\}\]
    There is no second vertex $x_j$ involved; this difference is reflected in the modified definition of the $\mathcal{E}_i$ events. Now we simply set $\mathcal{E}_1 = H(\cC)$ and, letting $x_i'$ denote an arbitrary vertex of $\widehat B(x_i; K)$ as before,
    \begin{align*}
        \mathcal{E}_2(x_i') = \left\{ x'_i \xleftrightarrow{B(2\|x\|_{\infty}) \setminus S_i } x\right\}, \quad
        \mathcal{E}_3(x_i, x_i') = \{\fC(x_i) \text{ and } \fC(x_i')\text{ are disjoint}\}\ .
    \end{align*}

    We proceed by proving estimates on the probabilities of these events. The first step is the following analogue of \eqref{lem4.3}:
    \begin{equation*}
    \begin{gathered}
    \text{there exists a $K' > K_0$ such that, for all $K > K'$, there is a $c = c(K) > 0$}\\
   \text{such that for all $x_i' \in \widehat B(x_i; K)$,} \quad \prob_{p_c}(\mathcal{E}_1 \cap \mathcal{E}_2(x_i')) \geq c |x|^{2-d} \prob_{p_c}(\mathcal{E}_1)\ .
\end{gathered}
 \end{equation*}
    The argument for this estimate is similar to that for \eqref{lem4.3} --- or indeed to the argument in \cite{KN}, but now using the restricted two-point estimate \eqref{eq:restrtwopt}.

    The final main step is a union bound similar to \eqref{eq:E3terms}:
    \begin{align*}
         \prob_{p_c}(\mathcal{E}_2(x_i') \cap \mathcal{E}_3(x_i, x_i') \mid \mathcal{E}_1) \geq
    \prob_{p_c}(\mathcal{E}_2(x_i') \mid \mathcal{E}_1)
    - \prob_{p_c}((\mathcal{E}_2(x_i') \cap \{x_i \lra x_i'\} \mid \mathcal{E}_1)\ .
    \end{align*}
    The negative term of the last display is similar to the first negative term of \eqref{eq:E3terms}, and we do not require the other two negative terms of \eqref{eq:E3terms} because there is no other cluster $\cD$ under consideration. The negative term of the above display can be estimated similarly to the methods which produced \eqref{eq:E3intermedbd}, leading to the analogous bound
    \begin{equation*}
    \begin{gathered}
    \text{for each $K$, and for an appropriate $x_i' \in \widehat B(x_i; K)$,}\\
    \prob_{p_c}(\mathcal{E}_2(x_i') \cap \{x_i \lra x_i'\}\mid \mathcal{E}_1) \leq C |x|^{2-d} K^{6-d} \log^7 K\ .
    \end{gathered}
\end{equation*}

    Pulling these estimates together, we find the lower bound
    \[ \prob_{p_c}(\mathcal{E}_2(x_i') \cap \mathcal{E}_3(x_i, x_i') \mid \mathcal{E}_1) \geq c |x|^{2-d}\ , \]
    for an appropriate choice of $x_i'$ for each $x_i$. A modification estimate using a map similar to the one defined in \eqref{eq:modifydef} then shows that
    \[\mathbb{E}[Y(K) \mid \mathcal{E}_1] \geq c |\partial_{out}\cC| |x|^{2-d}\ , \]
    and then the proof is completed using the analogue of \eqref{eq:vertunif}, which holds for essentially the same reasons as before. Item~\ref{it:knversion} of the lemma is proved.

    {\bf Proof of item~\ref{it:hs}.} We again follow the proof that led to \eqref{eq:useddist}. This leads to the following sequence of estimates: 
    \begin{align*}
        \prob_{p_c}(\cC \sa{B(\|x\|_{\infty})} x  \mid H(\cC) ) 
      &\leq \left[|\partial_{out}\cC| + |\partial_{in}\cC| \right] \sup_{y \in \partial_{in} \cC \cup \partial_{out}\cC} \prob_{p_c}(y \sa{B(\|x\|_{\infty})} x )\\
      &\leq C |x|^{1-d} |\partial_{out}\cC| \prob_{p_c}(H(\mathcal{C}))\ .
    \end{align*} 
    In the last line, we used the fact that the connection from $y \in \cC$ to $x$ must lie in an appropriate half-space. For instance, if $x$  satisfies $x \cdot \mathbf{e}_1 = \|x\|_\infty$, then the connection from $y$ to $x$ in particular lies in
    \[\{z \in \Z^d: \, z \cdot \mathbf{e}_1 \leq \|x\|_\infty\}\ . \]
    We then applied the half-space two-point bound \eqref{eq:hstwopt}. This completes the proof.

    {\bf Proof of item~\ref{it:toorig}.}  The argument is quite similar to the one used to show item~\ref{it:knversion}. The upper bound again is essentially the same argument as used in \eqref{eq:useddist}, noting in this case that the connection from $\cC$ to $y$ cannot use a vertex of $\partial_{out} \cC$; such a connection would intersect $\partial S_i$, which is forbidden.

    The proof of the lower bound from item~\ref{it:toorig} follows the outline of the argument for item~\ref{it:knversion}. The chief difference comes in the proof of the analogue of \eqref{lem4.3}. We adapt the definition \eqref{eq:Edef} to the present circumstances by setting, for $x_i'$ near a typical vertex $x_i \in Reg^{K}(\cC) \cap \partial_{in}\cC$, 
    \[\mathcal{E}_2(x_i') = \left\{ x'_i \sa{\Z^d \setminus [\cC \cup \partial S_i]} y\right\}\ . \]
    Arguing to produce a lower bound like \eqref{eq:outofbox}, we arrive at
    \begin{align*}
    \prob_{p_c}(\mathcal{E}_2(x_i') \mid H(\cC)) &\geq \prob_{p_c}(x'_i \leftrightarrow y) - \prob_{p_c}(x_i' \lra y \text{ via an open path intersecting } \cC \cup \partial S_i)\\
    &\geq \prob_{p_c}(x_i' \leftrightarrow y) - \sum_{z \in \cC}\prob_{p_c}(\{x_i' \lra z\} \circ \{x_j' \lra z\})\\
    &\quad - \sum_{w \in \partial S_i} \prob_{p_c}(\{x_i' \sa{S_i} w\} \circ \{w \lra y\})\ .
    \end{align*}

    The negative terms from the last display are controlled like the first and third negative terms of \eqref{eq:E2terms}. The first term of the display is lower-bounded using \eqref{2pt}. The remainder of the proof of the item is quite similar to the remainder of the proof of item~\ref{it:knversion}, so we refer the reader to the sketch of that item for more details.
\end{proof}

We introduce the following convention throughout the remainder of the paper:
\begin{equation}\begin{gathered}
        \text{\bf In all that follows, we choose $K = K_2$ for the constant $K_2$ of Lemma~\ref{lem:glue}}
        \\ \text{whenever the notions of Definition~\ref{defin:regdef} are referenced. }
    \label{eq:Kfixed}
    \end{gathered}
\end{equation}

\subsection{Decomposing the open arm from $0$\label{sec:tower}}
 In what follows, we consider connections between good spanning clusters which lie far from both the sets $V_n$ and $D_n$. It is helpful to introduce notation for a maximal $n$-dependent scale ensuring these clusters remain distant from both $V_n$ and $D_n$. We set
 \begin{equation}
     \label{eq:maxn}
    \text{for each $n$,} \quad Q(n) := \max\left\{i: \, \left[V_n \cup D_n \right] \cap B(2^{k_{i+1}^*}) = \varnothing \right\}\ .
 \end{equation}
In the next definition, we introduce notions of localization in space for open connections betweeen good spanning clusters and for open connections between a good spanning cluster and $V_n$. 
    \begin{definition}
        \label{defin:verygood2}
        Fix a value of $i$ (which may be $0$) and a $K_2$-good spanning set $\cC \subseteq Ann_i$. We take $\cC = \{0\}$ in case $i = 0$ and often simply write $0$ instead of $\{0\}$ for simplicity. Let $\cD \subseteq Ann_{i+1}$ be another arbitrary good spanning set. We write
         \begin{equation}
            \label{eq:Fdef}
            F(\cC, \cD) := H(\cD) \cap \left\{ \cC \sa{\Z^d \setminus [S_i \cup D_n]} \cD \right\} \cap \left\{\cD \sa{\Z^d \setminus [S_{i+1} \cup D_n]} V_n  \right\}\ .
        \end{equation}
        and
        \begin{align*}
            G(\cC, \cD) = &H(\cD) \cap \left\{ \cC \sa{\Z^d \setminus S_i} \cD \right\} \cap \left\{\cD \sa{\Z^d \setminus [S_{i+1} \cup D_n]} V_n \right\}\\
            &\setminus \{\cC \sa{\Z^d \setminus \cD} \partial S_{i+1}\}\ .
        \end{align*}
        We set $F(\cC) = \cup_{\cD} F(\cC, \cD)$ and $G(\cC) := \cup_{\cD} G(\cC, \cD)$, where the union is over $K_2$-good spanning sets in $Ann_{i+1}$; we note further that for $i \leq Q(n)$, we have $F(\cC, \cD) \supseteq G(\cC, \cD)$.
    \end{definition}
    Informally speaking, the event $F(\cC)$ describes the existence of a good spanning cluster in the annulus $Ann_{i+1}$ which lies on an arm from $\cC$ to $V_n$, while $G(\cC)$ describes its existence and uniqueness.
     We note that there is an implicit dependence on $i$ in the above definitions, but since the value of $i$ is as usual uniquely determined by the choice of $\cC$, it may be safely omitted from our notation.

  We will now show that, when a good spanning set $\cC \subseteq Ann_i$ is connected via an open arm to $V_n$ off $D_n$, then portions of the cluster of this open arm are very likely to be good spanning sets, and that these good spanning sets are likely to have their further open connections localized in space. In particular, open connections realizing the event $\{0 \sa{\Z^d \setminus D_n} V_n\}$ satisfy these properties. We first, in Lemma~\ref{lem:goodlikely}, prove this fact in the sense that $\prob_{p_c}(F(\cC) \mid \cC \sa{\Z^d \setminus [D_n \cup S_i]} V_n)$ is large.

     \begin{lemma} \label{lem:goodlikely}
     There exists a $c > 0$ such that the following holds, uniformly in $p \geq p_c$, in $k_1$, in $n \geq 1$, in $0 \leq i \leq \min\{Q(n), \beta(p)\} - 1$, and in good spanning sets $\cC$  in $Ann_i$:
		\[\prob_{p}\left(F(\cC) \left| H(\cC) \cap \left\{\cC \xleftrightarrow{\Z^d \setminus [D_n \cup S_i]} V_n\right\}\right. \right) \geq 1 - \exp(-c\, k_{i+1})\ . \]
        We again recall that the case $\cC = \{0\}$ is included in the case $i = 0$, where the conditioning is on the event $\{0 \sa{\Z^d \setminus D_n} V_n\}$.
		\end{lemma}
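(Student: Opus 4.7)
The plan is to bound the conditional complementary probability as a ratio: writing $\mathcal{A} = \{\cC \sa{\Z^d \setminus [D_n \cup S_i]} V_n\}$ for the conditioning event, it suffices to show
\[ \prob_p(F(\cC)^c \cap \mathcal{A} \mid H(\cC)) \leq e^{-c k_{i+1}} \prob_p(\mathcal{A} \mid H(\cC)), \]
where the denominator is bounded below using a summed version of Lemma~\ref{lem:glue}. To produce a candidate good spanning cluster, I would introduce an exploration that sweeps $q = 1, 2, \ldots, 32d^4$ in order, looking for a connected component of the open cluster of $\cC$ in $\Z^d \setminus [D_n \cup S_i]$, intersected with $Ann_{i+1}^q$, which is a spanning cluster of $Ann_{i+1}^q$ and satisfies both the volume and regularity conditions~(2) and~(3) of Definition~\ref{defin:Kgood}. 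Because on $\mathcal{A}$ the open connection from $\cC$ to $V_n$ must cross $Ann_{i+1}$, a spanning cluster is always present; the first $q$ at which ``goodness'' is met yields a candidate $\cD^*$ that automatically satisfies item~(4) by minimality.

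The first main task is to show that this exploration succeeds on an event of conditional probability $\geq 1 - e^{-c k_{i+1}}$. The regularity failure mode is handled directly by Lemma~\ref{lem:reg}: applied with $M$ equal to the boundary-volume threshold from Definition~\ref{defin:Kgood}(2), the failure probability is of order $\exp(C k_{i+1}^* - c k_{i+1}^2)$, which is stretched-exponentially small. The volume-failure modes are controlled by moment arguments: the upper bound on $|\partial_{in/out}\cD|$ follows from Markov's inequality together with cluster-size moments from \eqref{2pt}, while the lower bound follows from a second-moment estimate built on the restricted two-point lower bound \eqref{eq:restrtwopt}, which forces typical spanning clusters in the cluster of $\cC$ to carry the required boundary cardinalities. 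Summing over the $O(d^4)$ values of $q$ contributes only a multiplicative constant.

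The second and harder task is to verify the $S_{i+1}$-avoidance of the extension. By construction the first two conditions of $F(\cC, \cD^*)$ hold, and the tail of the original open path yields $\cD^* \sa{\Z^d \setminus D_n} V_n$; only $S_{i+1}$-avoidance may fail. If it fails, every open path from $\cD^*$ to $V_n$ off $D_n$ must enter $S_{i+1}$. Splitting such a path at its last entry vertex $y \in \partial S_{i+1}$ and applying BK-Reimer produces disjoint arms $\cD^* \lra y$ and $y \lra V_n$; I would bound each by \eqref{2pt}, sum over $y$ using $|\partial S_{i+1}| \lesssim 2^{(d-1) \ell_{i+1}}$, and then use the cardinality upper bound $|\partial_{out}\cD^*| \leq 2^{9 k_{i+1}^*/4}$ from Definition~\ref{defin:Kgood}(2) together with the distance $\gtrsim 2^{k_{i+1}^*}$ from $\partial_{out}\cD^*$ to $\partial S_{i+1}$. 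Dividing this by the unconstrained $\cD^* \to V_n$ probability (lower-bounded via Lemma~\ref{lem:glue}(\ref{it:knversion})) yields a ratio whose exponent is manifestly negative in $k_{i+1}$ for $d > 6$, gaining the required factor $e^{-c k_{i+1}}$.

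The main obstacle is precisely this second task. Because $S_{i+1}$ is a ball of intermediate radius $\ell_{i+1} = d k_{i+1}$ nested inside $Ann_{i+1}$, a naive union bound over $\partial S_{i+1}$ is not small on its own, and many of the intuitive ways to shrink the bound also appear in the direct probability, cancelling. The exponentially decaying correction is extracted only by combining the upper cardinality bound from Definition~\ref{defin:Kgood}(2) with two-point decay across the large scale gap $k_{i+1}^* - \ell_{i+1} = d(2d-1) k_{i+1}$; this is the step where both the $d > 6$ hypothesis and the careful design of ``good'' clusters are essential.
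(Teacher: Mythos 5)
Your overall architecture (explore $q = 1, 2, \ldots, 32d^4$ to find a good spanning cluster, handle the three failure modes separately, show the conditional failure probability is small) matches the paper's. But several of your individual steps would not go through as described, and you treat as the hard part something that the paper disposes of with a deterministic choice.

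\textbf{The $S_{i+1}$-avoidance is not a probabilistic issue.} You identify verifying $\cD^* \sa{\Z^d \setminus [S_{i+1} \cup D_n]} V_n$ as the ``second and harder task'' and try a BK/union-bound over $\partial S_{i+1}$, comparing against a $\cD^* \to V_n$ probability lower bound. This is both unnecessary and shaky: you would be comparing two tail probabilities both involving the unspecified distant set $V_n$, and Lemma~\ref{lem:glue} item~\ref{it:knversion} gives a lower bound only on a restricted connection to a single vertex, not to $V_n$ off $S_{i+1} \cup D_n$. The paper avoids all of this deterministically: given a self-avoiding witness $\gamma$ for the conditioning event, pick $z$ to be the first vertex of $\gamma$ after its \emph{last} exit from $S_{i+1}$, and take $\cD$ to be the good spanning cluster containing $z$ (which exists on the event $U_{i+1}$). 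Because $z$ comes after $\gamma$'s last exit from $S_{i+1}$, the tail of $\gamma$ from $z$ onwards automatically avoids $S_{i+1}$, so $\cD \sa{\Z^d \setminus [S_{i+1} \cup D_n]} V_n$ holds with no probability to pay. Your exploration, as stated, may produce a $\cD^*$ spanned by a part of $\gamma$ that dives back into $S_{i+1}$ before reaching $V_n$; the fix is not a probabilistic estimate but a different choice of reference vertex.

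\textbf{Your bounds on the volume failure modes are too weak.} To overcome the decoupling cost (roughly $2^{(2d-3)k_{i+1}^*}$, which is $2^{\Theta(d^3 k_{i+1})}$), the unconditional probability of each failure mode must be of order $2^{-c\, d^4 k_{i+1}}$, i.e.\ stretched-exponential in $k_{i+1}$ with a $d$-dependent prefactor. Your proposal of Markov's inequality plus cluster moments for the upper-volume failure gives only a polynomial gain of order $2^{-k_{i+1}/4}$ (the gap between the typical boundary size $2^{2k_{i+1}}$ and the threshold $2^{9k_{i+1}/4}$), which is nowhere near enough, and it cannot survive the union over $\partial_{in} Ann_{i+1}^0$. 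The paper instead shows this event is literally empty on $\tE_1^c$: if $|\partial_{in}\fC^q(x)| > 2^{9k_{i+1}/4}$ and at least half of those boundary vertices are regular, then Lemma~\ref{lem:glue} item~\ref{it:toorig} forces $\E[\,|\fC(y) \cap B(2^{k_{i+1}})|\mid H(A)]$ to exceed a threshold that contradicts the regularity-based upper bound. Similarly, for the lower-volume failure, your second-moment estimate plus ``summing over $O(d^4)$ values of $q$ contributes only a multiplicative constant'' has the logic inverted: the bad event is that \emph{all} $q$ fail, not that some $q$ fails, so a union bound over $q$ is irrelevant. What makes the bound small is an iterated conditional argument: each additional $q$ for which $|\partial_{in}\fC^q(x)|$ is small makes it conditionally unlikely (by Lemma~\ref{lem:nofurther}, \eqref{KN}, and \eqref{2pt}) that the cluster extends to the next shell at all, yielding a factor $2^{-k_{i+1}/4}$ per step, and hence $2^{-4d^4 k_{i+1}}$ after $16d^4$ steps. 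A single second-moment estimate gives only a constant per step and cannot produce this decay.

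In short, the skeleton is right but the actual estimates you propose would not close the argument: you are missing the deterministic choice of reference vertex that trivializes $S_{i+1}$-avoidance, and both of your volume arguments give bounds that are too weak to beat the decoupling penalty.
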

   
    For the proof of Lemma~\ref{lem:goodlikely}, we use the following result, which allows us to decouple cylinder events from events of the form $\{\cC \sa{\Z^d \setminus [D_n \cup S_i]} V_n\}$.
    \begin{lemma}\label{lem:decoup}
    There exists a $c > 0$ such that the following holds, uniformly in $p \geq p_c$, in $k_1$, in $n \geq 1$,  $0 \leq i \leq j \leq Q(n)$, in $K_2$-good spanning sets $\cC$  in $Ann_i$, and in events $\Gamma_{j}$ depending only on bonds inside $B(2^{k_j^* + 32 d^4 + 1})$:
        \begin{equation}
            \prob_p\left(\Gamma_j \,\Big | H(\cC) \cap  \,\left\{\cC \xleftrightarrow{\Z^d \setminus [D_n \cup S_i]} V_n\right\}\right) \leq C \cdot 2^{(2d-3)k_j^*}\mathbb{P}_p(\Gamma_j \mid H(\cC)).
        \end{equation}
    \end{lemma}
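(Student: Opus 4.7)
The plan is to write $\prob_p(\Gamma_j \mid H(\cC) \cap A) = \prob_p(\Gamma_j \cap A \mid H(\cC))/\prob_p(A \mid H(\cC))$ with $A := \{\cC \sa{\Z^d \setminus [D_n \cup S_i]} V_n\}$, and to bound each factor by exploiting the fact that any witnessing open path for $A$ must exit the box $B_j := B(2^{k_j^* + 32d^4 + 1})$ on which $\Gamma_j$ and $H(\cC)$ are supported. Note that since $j \leq Q(n)$, one has $B_j \cap [V_n \cup D_n] = \varnothing$. For each $z \in \partial_{out} B_j$, let $\mathcal{G}_z$ denote the event that $z$ is connected to $V_n$ by an open path whose edges each have at least one endpoint outside $B_j$ and which avoids $D_n$. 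Considering the last vertex of a witnessing path that lies inside $B_j$ yields the decomposition
\[ A \subseteq \bigcup_{z \in \partial_{out} B_j} \{\cC \sa{B_j \setminus S_i} z\} \cap \mathcal{G}_z, \]
with the key point that each $\mathcal{G}_z$ depends on a disjoint edge set from $H(\cC)$, $\Gamma_j$, and $\{\cC \sa{B_j \setminus S_i} z\}$.

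For the numerator, summing over exit points and using independence together with the trivial inequality $\prob_p(\Gamma_j \cap \{\cC \sa{B_j \setminus S_i} z\} \mid H(\cC)) \leq \prob_p(\Gamma_j \mid H(\cC))$ gives
\[ \prob_p(\Gamma_j \cap A \mid H(\cC)) \leq \prob_p(\Gamma_j \mid H(\cC)) \sum_{z \in \partial_{out} B_j} \prob_p(\mathcal{G}_z). \]
For the denominator, I would pick $z^* \in \partial_{out} B_j$ maximizing $\prob_p(\mathcal{G}_z)$, so that pigeonhole gives $\prob_p(\mathcal{G}_{z^*}) \geq |\partial_{out} B_j|^{-1} \sum_z \prob_p(\mathcal{G}_z)$. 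Using $A \supseteq \{\cC \sa{B_j \setminus S_i} z^*\} \cap \mathcal{G}_{z^*}$ with independence, plus an adaptation of Lemma~\ref{lem:glue} item~\ref{it:knversion} giving $\prob_p(\cC \sa{B_j \setminus S_i} z^* \mid H(\cC)) \geq c|\partial_{out}\cC| \cdot 2^{(2-d)k_j^*}$, yields
\[ \prob_p(A \mid H(\cC)) \geq c |\partial_{out}\cC| \cdot 2^{(2-d)k_j^*} \cdot |\partial_{out} B_j|^{-1} \sum_z \prob_p(\mathcal{G}_z). \]
Dividing the numerator bound by the denominator bound causes the factor $\sum_z \prob_p(\mathcal{G}_z)$ to cancel cleanly, and using $|\partial_{out} B_j| \leq C \cdot 2^{(d-1)k_j^*}$ together with $|\partial_{out}\cC| \geq 1$ gives
\[ \prob_p(\Gamma_j \mid H(\cC), A) \leq \frac{|\partial_{out} B_j|}{c |\partial_{out}\cC|} \cdot 2^{(d-2)k_j^*} \cdot \prob_p(\Gamma_j \mid H(\cC)) \leq C \cdot 2^{(2d-3)k_j^*} \prob_p(\Gamma_j \mid H(\cC)), \]
which is the advertised bound.

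The main obstacle will be the adaptation of Lemma~\ref{lem:glue} item~\ref{it:knversion} needed for the lower bound on $\prob_p(\cC \sa{B_j \setminus S_i} z^* \mid H(\cC))$: as stated, item~\ref{it:knversion} controls connections inside $B(2\|z^*\|_\infty) = 2 B_j$, but I need the stronger statement that the same lower bound holds for connections forced to stay inside $B_j$. For $z^* \in \partial B_j$ the restricted two-point bound \eqref{eq:restrtwopt} does not directly apply, since $z^*$ sits on the boundary rather than in the $(1-\epsilon)$-interior. I would resolve this by performing the exit decomposition on the slightly smaller ball $\tilde B_j := B(2^{k_j^* + 32d^4})$ instead: its boundary vertices $z^*$ satisfy $2\|z^*\|_\infty = \mathrm{rad}(B_j)$, so item~\ref{it:knversion} applied with $x = z^*$ yields the lower bound on $\prob_p(\cC \sa{B_j \setminus S_i} z^* \mid H(\cC))$ directly. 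The complication that $\Gamma_j$ may depend on edges in $B_j \setminus \tilde B_j$ is handled by first conditioning on $\edges(B_j) \setminus \edges(\tilde B_j)$ before invoking independence on truly-external edges; this modifies constants but not the power $(2d-3) k_j^*$, because the bookkeeping of $\sum_z \prob_p(\mathcal{G}_z)$ versus $\max_z \prob_p(\mathcal{G}_z)$ is unchanged.
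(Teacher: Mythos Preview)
Your core approach matches the paper's: decompose over exit vertices from $B_j$, bound the numerator via a union bound and independence, lower-bound the denominator by picking a maximizing exit vertex $z^*$ and invoking the gluing estimate of Lemma~\ref{lem:glue}, then divide so that the exit-probability factor cancels. You also correctly isolate the real obstacle, namely that item~\ref{it:knversion} only controls connections inside $B(2\|z^*\|_\infty)=2B_j$ rather than inside $B_j$ itself.

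Your proposed workaround, however, does not close the gap. Shifting the exit surface to $\partial\tilde B_j$ breaks exactly the independence you are trying to exploit: if $z^*\in\partial\tilde B_j$ then any nontrivial event $\mathcal G_{z^*}$ must use edges with an endpoint in $B_j\setminus\tilde B_j$, and these edges also support both $\Gamma_j$ and the connection $\{\cC\sa{B_j\setminus S_i}z^*\}$. Conditioning first on $\edges(B_j)\setminus\edges(\tilde B_j)$ does not rescue the argument, because after that conditioning the factors $\prob_p(\Gamma_j\cap H(\cC)\mid\sigma)$ and $\prob_p(\mathcal G_z\mid\sigma)$ are correlated through $\sigma$; once you take expectation over $\sigma$ the product structure is lost and the $\sum_z\prob_p(\mathcal G_z)$ no longer cancels between numerator and denominator.

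The paper's resolution is to abandon strict independence in the denominator and use the Harris--FKG inequality instead. One keeps $z^*\in\partial B_j$ (so that $\rho_j=\max_z\prob_p(\mathcal G_z)$ appears in both numerator and denominator), but in the denominator one lower-bounds $A$ by $\{\cC\sa{B(2^{k_j^*+32d^4+2})\setminus S_i}z^*\}\cap\mathcal G_{z^*}$, allowing the connection from $\cC$ to roam in the doubled box $2B_j$ so that item~\ref{it:knversion} applies verbatim. These two events now overlap in edge support, but both are increasing in the edges outside the (finite) set determining $H(\cC)$, and $\mathcal G_{z^*}$ is independent of that set; hence FKG yields
\[
\prob_p\bigl(H(\cC)\cap\{\cC\sa{2B_j\setminus S_i}z^*\}\cap\mathcal G_{z^*}\bigr)\ \ge\ \prob_p\bigl(H(\cC)\cap\{\cC\sa{2B_j\setminus S_i}z^*\}\bigr)\,\prob_p(\mathcal G_{z^*}),
\]
after which item~\ref{it:knversion} gives the factor $c\,2^{(2-d)k_j^*}$ and the proof concludes as in your outline. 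The missing ingredient in your proposal is precisely this use of FKG in place of independence for the lower bound.
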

	\begin{proof}
    Using a union bound and independence,
	    \begin{align}
	        \prob_p\left(H(\cC) \cap \Gamma_j \cap \left\{ \cC \xleftrightarrow{\Z^d \setminus [D_n \cup S_i]} V_n\right\}\right) &\leq \prob_p\left( H(\cC) \cap \Gamma_j \cap \bigcup_{z \in \partial B\left(2^{k_j^* +32 d^4 + 1}\right)} \left\{z \xleftrightarrow{\Z^d \setminus \left[D_n \cup B\left(2^{k_j^*+32 d^4 + 1}\right)\right]} V_n\right\}\right) \nonumber \\
         &\leq \sum_{z \in \partial B\left(2^{k_j^*+32 d^4 + 1}\right)} \prob_p(H(\cC) \cap \Gamma_j)\prob_p\left(z \xleftrightarrow{\Z^d \setminus \left[D_n \cup B\left(2^{k_j^*+32 d^4 + 1}\right)\right]} V_n\right) \nonumber\\
         &\leq C \cdot 2^{(d-1)k_j^*} \cdot \rho_j \cdot \prob_p(H(\cC) \cap \Gamma_j), \label{eq:rhobd}
	    \end{align}
     where 
     \begin{equation}
         \rho_j = \rho_j(p, n) = \max_{z \in \partial B\left(2^{k_j^*+32 d^4 + 1}\right)} \prob_p\left(z \xleftrightarrow{\Z^d \setminus \left[D_n \cup B\left(2^{k_j^*+32 d^4 + 1}\right)\right]} V_n\right).
         \label{eq:rhodef}
     \end{equation}
    But
    \begin{align}
     &\prob_p\left(H(\cC) \cap \left\{\cC \xleftrightarrow{\Z^d \setminus [D_n \cup S_i]} V_n\right\}\right)\nonumber\\
     \geq &\ \prob_p\left(H(\cC) \cap \left\{\cC \xleftrightarrow{B\left(2^{k_j^*+32 d^4 + 2}\right)\setminus S_i } z^*\right\} \cap \left\{ z^* \xleftrightarrow{\Z^d \setminus \left[D_n \cup B\left(2^{k_j^*+32 d^4 + 1}\right) \right]} V_n\right\}\right)\nonumber \\
     \geq &\ \prob_p\left(H(\cC) \cap \left\{\cC \xleftrightarrow{B\left(2^{k_j^*+32 d^4 + 2}\right)\setminus S_i } z^*\right\}\right)\prob_p\left( z^* \xleftrightarrow{\Z^d \setminus \left[D_n \cup B(2^{k_j^*+32 d^4 + 1}) \right]} V_n\right) \label{eq:usefkg} \\
     \geq &\ \prob_p(H(\cC))\prob_{p_c}\left(\left. \cC \xleftrightarrow{B\left(2^{k_j^*+32 d^4 + 2}\right)\setminus S_i } z^* \right | H(\cC)\right)\prob_{p}\left( z^* \xleftrightarrow{\Z^d \setminus \left[D_n \cup B(2^{k_j^*+32 d^4 + 1}) \right]} V_n\right) \nonumber\\
     \geq &\ c\, 2^{(2-d)k_j^*}\,\prob_p(H(\cC))  \rho_j\ .  \label{eq:usehs}
    \end{align}
    Here $z^*$ is a maximizing vertex in the definition of \eqref{eq:rhodef}; in \eqref{eq:usefkg} we used the generalized Harris-FKG inequality, and in \eqref{eq:usehs} we used item~\ref{it:knversion} of Lemma~\ref{lem:glue}. Applying the bound \eqref{eq:usehs} with \eqref{eq:rhobd} completes the proof.
 \end{proof}
  
   We now prove Lemma~\ref{lem:goodlikely}. 
	\begin{proof}[Proof of Lemma~\ref{lem:goodlikely}]
        We decompose the event $A_n(\cC):=H(\cC) \cap \{\cC \sa{\Z^d \setminus [S_i \cup D_n]} V_n\}$, recalling again the special case $i = 0$ with $\cC = \{0\}$ and $S_i = \varnothing$. Let us consider an outcome $\omega \in A_n(\cC)$ which also lies in the event
        \[U_{i+1} := \left\{\begin{array}{c}
        \text{for each spanning cluster $\widetilde \cC$ of $Ann_{i+1}$, for each $z \in \widetilde \cC$,}\\
        \text{there is a $1 \leq q \leq 32d^4$ such that $\fC_{Ann_{i+1}^q}(z)$ is a $K_2$-good spanning cluster}
        \end{array} \right\}\ . \]
        If $\gamma$ is an arbitrarily chosen self-avoiding open path in $\omega$ with $\gamma \subseteq \Z^d \setminus [S_i \cup D_n]$ realizing the connection from $\cC$ to $V_n$ guaranteed by the event $A_n(\cC)$, then letting $z$ be the first vertex of $\gamma$ after its last exit of $S_{i+1}$, we see $\fC_{Ann_{i+1}(z)}$ spans $Ann_{i+1}$ since it contains a subsegment of $\gamma$ which crosses this annulus. 

        In particular, by the choice of $\omega$, the cluster $\fC_{Ann_{i+1}(z)}$ contains an open subset $\cD \ni z$ which can be chosen, since $\omega \in U_{i+1}$, to be a good spanning cluster. Since the open path $\gamma$ realizes an open connection from $\cC$ to $\cD$, and since the portion of $\gamma$ from $z$ onward avoids $S_{i+1}$, we have $\omega \in F(\cC, \cD)$. It follows that
        \begin{equation}
            \label{eq:allgood}
            \prob_p\left(F(\cC) \left|  A_n(\cC) \right.\right) \geq \prob_p\left(U_{i+1} \left| A_n(\cC) \right.\right)\ ;
        \end{equation}
        we prove the lemma by providing a lower bound on the right-hand side of \eqref{eq:allgood}.

       To reduce clutter for the remainder of the proof, we introduce the abbreviation $\fC^q(x) := \fC_{Ann_{i+1}^q}(x)$. We decompose the complementary event $U_{i+1}^c$, since several different things guarantee its occurrence. Let us define
        \begin{align*}
            \tE_1 &= \left\{ \begin{array}{c}
            \text{there is an $x \in \partial_{in} Ann_{i+1}^0$ and some $1 \leq q \leq 32d^4$ such that }\\ \left|\partial_{in}\fC^q(x)\right| \geq 2^{7k_{i+1} /4} \text{ and } \left|\partial_{out}\fC^q(x)\right|\geq 2^{7k_{i+1}^*/4} \\
             \text{ but $\left|\partial_{in}\fC^q(x) \cap Reg^{K_2}(\cC)\right| \leq (1/2) \left|\partial_{in}\fC^q(x)\right|$ or $\left|\partial_{out}\fC^q(x) \cap Reg^{K_2}(\cC)\right| \leq (1/2) \left|\partial_{out}\fC^q(x)\right|$}
            \end{array}\right\};\\
            \tE_2 &= \left\{\begin{array}{c}
            \text{there is an $x \in \partial_{in} Ann_{i+1}^0$ such that $x \lra \partial_{in} Ann_{i+1}$ and $x \lra \partial_{out} Ann_{i+1}$,}\\
            \text{but for some $1 \leq q\leq 32d^4$, } \left| \partial_{in} \fC^q(x)\right| > 2^{9 k_{i+1}/4} \text{ or }  \left| \partial_{out} \fC^q(x)\right| > 2^{9 k_{i+1}^*/4}
            \end{array}\right\};\\
            \tE_3 &= \left\{\begin{array}{c}
            \text{there is an $x \in \partial_{in} Ann_{i+1}^0$ such that $x \lra \partial_{in} Ann_{i+1}$ and $x \lra \partial_{out} Ann_{i+1}$,}\\
            \text{but for all $1 \leq q\leq 32d^4$, either } \left| \partial_{in} \fC^q(x)\right| < 2^{7 k_{i+1}/4} \text{ or } \left| \partial_{out} \fC^q(x)\right| < 2^{7 k_{i+1}^* /4}
            \end{array}\right\}.
        \end{align*}
        It follows immediately from the definitions that if $\tE_1^c \cap \tE_2^c \cap \tE_3^c$ occurs, then for every $z$ lying in a spanning cluster of $Ann_{i+1}$ there is some $x \in \partial_{in} \fC^0(z)$ and some $q$ such that $\fC^q(x) = \fC^q(z)$ satisfies the conditions \ref{it:good1} through \ref{it:good3} of Definition~\ref{defin:Kgood}. Choosing the minimal possible $q$ gives that $\fC^q(x)$ satisfies all of Definition~\ref{defin:Kgood}. In other words, we have shown
        \[U_{i+1} \supseteq \tE_1^c \cap \tE_2^c \cap \tE_3^c \]
        and so 
        \begin{equation}
        \label{eq:totk}
            \prob_p(U_{i+1} \mid A_n(\cC)) \geq 1 - \prob_p(\tE_1 \mid A_n(\cC)) -  \prob_p(\tE_2 \setminus \tE_1 \mid A_n(\cC)) - \prob(\tE_3 \setminus \tE_2 \mid A_n(\cC)).
        \end{equation}

        We will prove the lemma by providing an appropriate bound on each negative term from~\eqref{eq:totk}.
        In fact, applying Lemma~\ref{lem:decoup} immediately gives us
        \begin{align}
            \nonumber\prob_p(U_{i+1} \mid A_n(\cC)) &\geq 1 - 2^{(2d-3) k_{i+1}^*} \left[\prob_p(\tE_1 \mid H(\cC) )-  \prob_p(\tE_2 \setminus \tE_1 \mid H(\cC)) - \prob_p(\tE_3 \setminus \tE_2 \mid H(\cC))\right]\\
            &= 1 - 2^{(4d^3-6d^2) k_{i+1}} \left[\prob_p(\tE_1) +  \prob_p(\tE_2 \setminus \tE_1 ) + \prob(\tE_3 \setminus \tE_2 )\right],\label{eq:totk2}
        \end{align}
        where in \eqref{eq:totk2} we used the independence of events depending on edges in disjoint regions. In particular, it suffices to upper bound the probabilities appearing in the square brackets of \eqref{eq:totk2}, and so we spend the remainder of the proof doing this. More specifically, we prove
        \begin{equation}
            \label{eq:totk3}
            \prob_p(\tE_1) +  \prob_p(\tE_2 \setminus \tE_1 ) + \prob_p(\tE_3 \setminus \tE_2 ) \leq C 2^{- 4d^4 k_{i+1}}
        \end{equation}
        for some constant $C$ uniform in the same parameters as in the statement of the lemma, from which the lemma's result follows.

        As before, since $i+1 \leq \beta(p)$, and since each of the events appearing in \eqref{eq:totk3} depend on the status of edges inside $B(2^{k_{i+2}^*})$, the result \eqref{eq:totk3} follows for general $p \geq p_c$ once we have proved it for the case $p = p_c$. We restrict to this case in what follows. We begin by controlling $\prob_{p_c}(\tE_1)$ by a union bound. We write
        \begin{align}
            \prob_{p_c}(\tE_1) \leq \sum_{q = 1}^{2d}\sum_{x \in \partial_{in} Ann_{i+1}^0} \Big[ &\prob_{p_c}\left(\left|\partial_{in}\fC^q(x)\right| \geq 2^{7k_{i+1}/4}, \, 
             \left|\partial_{in}\fC^q(x) \cap Reg^{K_2}(\cC)\right| \leq (1/2) \left|\partial_{in}\fC^q(x)\right|\right)\nonumber\\
             + &\prob_{p_c}\left(\left|\partial_{out}\fC^q(x)\right| \geq 2^{7k_{i+1}^*/4}, \, 
             \left|\partial_{out}\fC^q(x) \cap Reg^{K_2}(\cC)\right| \leq (1/2) \left|\partial_{out}\fC^q(x)\right|\right)\nonumber\Big],
        \end{align}
        and applying Lemma~\ref{lem:reg} gives an upper bound of
        \begin{equation}
             \prob_{p_c}(\tE_1) \leq C \exp(-c k_{i+1}^2 ) \leq C 2^{-4d^4 k_{i+1}}. \label{eq:gamma1conc}
        \end{equation}

        We continue with our estimates, turning to the bound for the second term of~\eqref{eq:totk3}. We again use a union bound:
        \begin{equation}
            \label{eq:gammaempty}
            \tE_2 \setminus \tE_1 \subseteq \bigcup_{\substack{x \in \partial_{in} Ann_{i+1}^0\\ 1 \leq q \leq 32d^4}} \left[\tE_{2, q, x}^{int} \cup \tE_{2, q, x}^{ext}\right]\ ,
        \end{equation}
        where
        \begin{align}
           \tE_{2,q, x}^{int} &:= \left\{  2^{9k_{i+1}/4} <  \left|\partial_{in}\fC^q(x)\right| \leq 2 \left|\partial_{in}\fC^q(x) \cap Reg^{(i, q,K_2)}(x)\right| \right\}\ ,\nonumber\\
           \tE_{2, q, x}^{ext} &:= \left\{  2^{9k_{i+1}^*/4} <  \left|\partial_{out}\fC^q(x)\right| \leq 2 \left|\partial_{out}\fC^q(x) \cap Reg^{(i, q,K_2)}(x)\right| \right\}\ . \nonumber
        \end{align}

        It is the case that
        \begin{equation}
            \label{eq:gammaempty3}
           \text{there is an $i_0$ such that if $i \geq i_0$, we have for each $q$ and $x$ that}\quad \tE_{2, q, x}^{int} = \varnothing = \tE_{2,q, x}^{ext}.
        \end{equation}
        The $i_0$ can actually be chosen independent of $k_1$ --- all that is required is that $2^{k_{i_0}}$ is larger than some universal constant, as we will see at \eqref{eq:Ebdd} and \eqref{eq:Ebdd2}.
        The fact \eqref{eq:gammaempty3} in conjunction with \eqref{eq:gammaempty} establishes
        \begin{equation}
            \label{eq:gammaempty4}
            \prob(\tE_2 \setminus \tE_1) \leq C 2^{- 4 d^4 k_{i+1}} \quad \text{for all $i \geq 1$}
        \end{equation}
         by choosing $C$ sufficiently large.
        
        We prove the portion of \eqref{eq:gammaempty3} involving $\tE_{2,q, x}^{int}$; the other argument is similar.  We argue that the event $\{\fC^q(x) = A\}$ for any realization $A$ with $|\partial_{in}A| \geq 2^{9k_{i+1}/4}$ and $|\partial_{in}A\cap Reg^{(i+1, q,K_2)}(A)| \geq 2^{(9 k_{i+1}/4) -1}$ has contradictory properties, and hence must be empty. 
        
        Indeed, we note  
        \[ \tE_{2,q, x}^{int} \subseteq \bigcup_{A} H(A)\ , \]
        where the union is over $A$ as in the previous sentence. We fix an $A$ as in this display and let $y \in Reg^{i+1, q, K_2}(A)$ be arbitrarily chosen. By Definition~\ref{defin:regdef}, considering contributions from the event $\mathcal{T}_{2^{k_{i+1}}}$ and its complement, we have
        \begin{align}
            \E_{p_c} \left[ \left|\fC(y) \cap B(2^{k_{i+1}})\right| \mid H(A)\right] &\leq 
            C 2^{dk_{i+1}}\left[1 - \prob_{p_c}\left(\mathcal{T}_{2^{k_{i+1}}}(y)\right) \mid H(A)\right] + C 2^{4 k_{i+1}} k_{i+1}^7\nonumber\\
            &\leq C 2^{4 k_{i+1}} k_{i+1}^7\ .\label{eq:Ebdd} 
        \end{align}

        On the other hand, letting $R = 2^{k_{i+1} - 32d^4 - 3}$ and applying item~\ref{it:toorig} of Lemma~\ref{lem:glue}, we have
        \[\text{for each $z \in B(R),$ we have } \prob_{p_c}\left(y \sa{B(2^{\ell_i})} z \mid H(A)\right) \geq cR^{17/4-d}\ . \]
        Summing over such $z$ yields
        \begin{equation}
            \label{eq:Ebdd2}
            \E_{p_c} \left[ \left.\left|\fC(y) \cap B(2^{k_{i+1}})\right|\  \right| H(A)\right] \geq cR^{17/4} \geq c 2^{(17/4) k_{i+1}}\ ,
        \end{equation}
        in contradiction to \eqref{eq:Ebdd} as long as $k_{i+1}$ is sufficiently large. This proves \eqref{eq:gammaempty3}, hence \eqref{eq:gammaempty4}.

        Finally, we upper-bound the last probability appearing in \eqref{eq:totk3}. This again involves a union bound. We write
        \begin{equation}
            \label{eq:gamma31}
            \prob_{p_c}(\tE_3 \setminus \tE_2) \leq \sum_{x \in \partial_{in} Ann_{i+1}^0} \left[\prob_{p_c}(\tE_{3, x}^{int}) + \prob_{p_c}(\tE_{3,x}^{ext})\right]\ ,
        \end{equation}
        where
        \begin{align}
            \label{eq:gamma32}
             \tE^{int}_{3, x} &:= \left\{\begin{array}{c}
            \text{$x \lra \partial_{in} Ann_{i+1}$ and $x \lra \partial_{out} Ann_{i+1}$, and for }\\
            \text{at least $16 d^4$ values of $1 \leq q\leq 32d^4$, }
            \left| \partial_{in} \fC^q(x) \right| < 2^{7 k_{i+1} /4}
            \end{array}\right\}   \setminus \tE_2 \;\\
            \tE^{ext}_{3, x} &:= \left\{\begin{array}{c}
            \text{$x \lra \partial_{in} Ann_{i+1}$ and $x \lra \partial_{out} Ann_{i+1}$, and for }\\
            \text{at least $16d^4$ values of $1 \leq q\leq 32d^4$, }
            \left| \partial_{out} \fC^q(x) \right| < 2^{7 k_{i+1}^*/4}
            \end{array}\right\}   \setminus \tE_2 \ . \nonumber
        \end{align}
        Again, we give the argument which bounds $\prob_{p_c}(\tE_{3, x}^{int})$, with the other case being similar.

        We let $\{q_k\} = \Xi \subseteq \{1, 2, \dots, 32d^4\}$ be an arbitrary subset with $|\Xi| \geq 16 d^4$; the set $\Xi$ will represent a realization of values of $q$ for which $\left|\partial_{in} \fC^q(x)\right| < 2^{7k_{i+1}/4}$. We define the event
        \[	F_q(x) = \left\{0 < \left|\partial_{in}\fC^q(x)\right| \leq 2^{7k_{i+1}/4}  \right\} \cap \left\{0 < \left|\partial_{out}\fC^q(x)\right| \leq 2^{9k_{i+1}^*/4}  \right\} ; \]
        clearly $\tE_{3,x}^{int} \subseteq \cup_{\Xi} \cap_{q \in \Xi} F_q(x)$,
        where the union is over all subsets $\Xi$ as above.
        We bound
        \begin{equation}
            \label{eq:manyqs}
            \begin{split}
            \prob_{p_c}(\tE_{3, x}^{int}) \leq \sum_{\Xi: \, |\Xi| \geq 16 d^4 } \prob_{p_c}\left(\bigcap_{q \in \Xi} F_q(x)\right)
            \end{split}
        \end{equation}
        and control the inner probability on the right uniformly in $\Xi$ and $x$ as above inductively.

        Indeed, given an arbitrary subset $\Xi \subseteq \{1, 2, \dots, 32d^4\}$ with $M := |\Xi| > 1$, we write
        \begin{equation}\
        \label{eq:toinsert} \prob_{p_c}\left(\bigcap_{q \in \Xi} F_q(x)\right) = \sum_{A} \prob_{p_c}\left(F_{q_M}(x) \mid \fC^{q_{M-1}}(x) = A\right) \prob_{p_c}( \fC^{q_{M-1}}(x) = A);
        \end{equation}
        here the sum is over cluster realizations $A$ such that $F_{q_k}(x)$ occurs for all $k < M$ (noting that these events are measurable with respect to $\fC^{q_{M-1}}(x)$). Applying Lemma~\ref{lem:nofurther}, we have
        \begin{align} 
            \prob_{p_c}\left(F_{q_M}(x) \mid \fC^{q_{M-1}}(x) = A\right)  &\leq \prob_{p_c}\left(x \lra \partial B(2^{k_{i+1} - q_M}) \mid \fC^{q_{M-1}}(x) = A\right) \nonumber 
            \\ &\leq C \left| \partial_{in}A \right| 2^{-2k_{i+1}} +  C\left| \partial_{ext}A \right| 2^{(2-d) k_{i+1}^*} 2^{dk_{i+1}}\nonumber\\
            &\leq C 2^{-k_{i+1} / 4}\ ,\label{eq:continclust}
        \end{align} 
        where we also used the one-arm probability inequality \eqref{KN}, the two-point function asymptotic \eqref{2pt}, and the cardinality bounds for $\partial_{in} A$ and $\partial_{out} A$ from the occurrence of $F_{q_{M-1}}(x)$.

        Essentially the same argument in case $|\Xi| = \ell = 1$ gives
        \[\prob_{p_c}(F_{q_1}(x)) \leq \prob_{p_c}(x \lra \partial B(2^{-k_{i+1} - 1})) \leq C 2^{-2 k_{i+1}}\ . \]
        Pulling this together with \eqref{eq:continclust} and inserting the resulting bounds into \eqref{eq:toinsert} yields
        \[ \prob_{p_c}(\tE_{3, x}^{int}) \leq C \left[2^{-k_{i+1} / 4}\right]^{16 d^4} \leq C 2^{ -4 d^4 k_{i+1}}\ .\]
        The analogous bound for $\prob(\tE_{3, x}^{ext})$ is even smaller, so we have shown that
        \begin{equation}
            \label{eq:gamma3bd}
            \prob_{p_c}(\tE_{3}) \leq C 2^{-4 d^4 k_{i+1}} 2^{(d-1) k_{i+1}} \leq C 2^{-4d^4 k_{i+1}}\ .
        \end{equation}

        Pulling together the bounds \eqref{eq:gamma1conc}, \eqref{eq:gammaempty4},\eqref{eq:gamma31}, \eqref{eq:gamma32}, and \eqref{eq:gamma3bd} shows \eqref{eq:totk3} and, as remarked just below that equation, completes the proof of the lemma.
		\end{proof}
	
	We now work to strengthen Lemma~\ref{lem:goodlikely}, showing next that $G(\cC, \cD)$ has high conditional probability given $F(\cC, \cD)$. Along the way, it is useful to note one of the most helpful properties of $G(\cC, \cD)$, namely a factorization of its probability. To express this in a reusable way, we introduce more notation.
     For each $i$ and ($K_2$-)good spanning sets $\cC \subseteq Ann_{i}$ and $\cD \subseteq Ann_{i+1}$, 
         \begin{align} \label{eq:Midef}
             M_i^p(\cC, \cD) &:= \prob_p\left( \left. H(\cD) \cap \left\{ \cC \sa{\Z^d \setminus S_i} \cD \right\}\setminus \left\{\cC\sa{\Z^d \setminus \cD} \partial S_{i+1}\right\} \right| H(\cC)\right)\ .
         \end{align}
         
         In the case $i = 0$, we introduce an analogous notation, which chiefly differs via the imposition of the cylinder event $E$.   For each good spanning set $\cC \subseteq Ann_1$, we set
         \begin{align}\label{eq:Midef2}
             M_0^p(0, \cC) &:= \prob_p\left(E \cap H(\cC) \cap \left\{ 0 \sa{S_1} \cC \right\}\setminus \left\{0 \sa{\Z^d \setminus \cC} \partial S_{1}\right\}\right)\ .
         \end{align}
         It follows immediately from independence of bond variables corresponding to disjoint sets of edges that, for $\cC \subseteq Ann_{i}$ and $\cD \subseteq Ann_{i+1}$ good spanning sets:
         \begin{equation}
             \label{eq:Gfact}
             \prob_p(G(\cC, \cD) \mid H(\cC)) =  M_i^p(\cC, \cD) \prob_p\left(\partial_{out}\cD \xleftrightarrow{\Z^d \setminus [S_{i+1} \cup D_n]} V_n \right)\ .
         \end{equation}
        
    \begin{lemma}\label{lem:noleak}
    There exist $C, c > 0$ such that the following hold uniformly in $k_1$ satisfying \eqref{eq:k0choice} and in indices $i < \min\{Q(n), \beta(p)\}$.
    For each pair $\cC \subseteq Ann_i, \, \mathcal{D} \subseteq Ann_{i+1}$ of $K_2$-good spanning sets, we have
    \begin{equation}
        \label{eq:Mslice}
        \prob_p\left( \left. H(\cD) \cap \left\{ \cC \sa{S_{i+1}} \cD \right\} \cap \left\{\cC\sa{\Z^d \setminus \cD} \partial S_{i+1}\right\} \right| H(\cC)\right) \leq C 2^{- 2d k_{i+1}}\prob_{p}(H(\cD))\ ,
    \end{equation}
    and so
    \begin{equation}
        \label{eq:Mlb}
        c |\partial_{out} \cC| |\partial_{in} \cD| 2^{(2-d) k_{i+1}} \leq \frac{M_i^p(\cC, \cD)}{\prob_{p}(H(\cD))} \leq C |\partial_{out} \cC| |\partial_{in} \cD| 2^{(2-d) k_{i+1}}.
    \end{equation}
    It follows that
        \begin{equation}\label{eq:FtoG}
            \prob_{p}\left( G(\cC, \cD) \left | H(\cC) \right. \right) \geq \prob_{p}\left( F(\cC, \cD) \left | H(\cC)  \right. \right)\left[1 - C 2^{-d k_{i+1}} \right] \ .
        \end{equation}
        
       
    \end{lemma}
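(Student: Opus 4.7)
The plan is to prove \eqref{eq:Mslice}, \eqref{eq:Mlb}, and \eqref{eq:FtoG} in that order, using item~\ref{it:twoclust} of Lemma~\ref{lem:glue}, Lemma~\ref{lem:nofurther}, the one-arm bound \eqref{KN}, the cardinality bounds from Definition~\ref{defin:Kgood}, and the rapid scale separation $k_{i+1} = 2d^2 k_i^*$.  Since the events entering \eqref{eq:Mslice} and the first summand of \eqref{eq:Mlb} are cylinder events in $B(2^{k_{i+1}^*})$ and $i+1 \leq \beta(p)$, it suffices to work at $p = p_c$ throughout, absorbing the factor from \eqref{eq:maxscale2} into constants.

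For \eqref{eq:Mslice}, the events $H(\cC)$ and $H(\cD)$ are independent (they depend on edges in the disjoint annuli $Ann_i^{q_i}$ and $Ann_{i+1}^{q_j}$), and conditional on both the remaining edges are i.i.d.\ Bernoulli.  A BK--Reimer-style product bound -- justified by the standard union-bound over the external neighbor of $\cC$ at which the two relevant open paths branch (since both connection events originate at $\cC$, this is the main technical wrinkle) -- yields
\[
\prob_{p_c}\bigl(\{\cC \sa{S_{i+1}} \cD\} \cap \{\cC \sa{\Z^d \setminus \cD} \partial S_{i+1}\} \,\bigm|\, H(\cC), H(\cD)\bigr) \leq C\, \pi_{\text{in}} \cdot \pi_{\text{out}}.
\]
Item~\ref{it:twoclust} of Lemma~\ref{lem:glue} bounds $\pi_{\text{in}} \leq C\,|\partial_{out}\cC|\,|\partial_{in}\cD|\,2^{(2-d) k_{i+1}}$; Lemma~\ref{lem:nofurther} with $A_0 = Ann_i^{q_i}$ reduces $\pi_{\text{out}}$ to $\sum_w \prob_{p_c}(w \lra \partial S_{i+1})$ over external neighbors $w$ of $\cC$, which via \eqref{KN} at distance $\sim 2^{d k_{i+1}}$ and the good-set estimate $|\partial^{ext}\cC| \leq C\, 2^{9 k_i^*/4}$ gives $\pi_{\text{out}} \leq C\, 2^{9 k_i^*/4 - 2d k_{i+1}}$.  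Multiplying, using the good-set upper bounds $|\partial_{out}\cC| \leq 2^{9 k_i^*/4}$ and $|\partial_{in}\cD| \leq 2^{9 k_{i+1}/4}$, and absorbing the residual $2^{O(k_i^*)}$ factors by scale separation (for $d \geq 7$) produces the desired bound $\leq C\, 2^{-2d k_{i+1}}\,\prob_{p_c}(H(\cD))$.

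For \eqref{eq:Mlb}, the key observation is that if a self-avoiding open path from $\cC$ to $\cD$ in $\Z^d \setminus S_i$ exits $S_{i+1}$, its subpath up to the first exit avoids $\cD$ (present only at the endpoint) and therefore witnesses $\{\cC \sa{\Z^d \setminus \cD} \partial S_{i+1}\}$.  Hence
\[
M_i^p(\cC, \cD) = \prob_{p_c}\bigl(H(\cD) \cap \{\cC \sa{S_{i+1} \setminus S_i} \cD\} \,\bigm|\, H(\cC)\bigr) - \prob_{p_c}\bigl(H(\cD) \cap \{\cC \sa{S_{i+1} \setminus S_i} \cD\} \cap \{\cC \sa{\Z^d \setminus \cD} \partial S_{i+1}\} \,\bigm|\, H(\cC)\bigr).
\]
Item~\ref{it:twoclust} of Lemma~\ref{lem:glue}, combined with independence of $H(\cD)$ from $H(\cC)$, delivers both sides of \eqref{eq:Mlb} on the first summand; the subtracted term is bounded by \eqref{eq:Mslice}, and the good-set lower bounds $|\partial_{out}\cC|\,|\partial_{in}\cD| \geq 2^{7(k_i^* + k_{i+1})/4}$ ensure that it is negligible compared to the main term.

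Finally, for \eqref{eq:FtoG}, since $i+1 \leq Q(n)$ gives $D_n \cap S_{i+1} = \varnothing$, paths in $S_{i+1}$ automatically avoid $D_n$; hence on $G(\cC, \cD)$ the $\cC$-to-$\cD$ arm is confined to $S_{i+1}$ and witnesses $\cC \sa{\Z^d \setminus [S_i \cup D_n]} \cD$, so $G \subseteq F$ and $F \setminus G = F \cap \{\cC \sa{\Z^d \setminus \cD} \partial S_{i+1}\}$.  Dropping the $\cC$-$\cD$ arm from this intersection and invoking BK together with the disjoint supports of the remaining two arms -- the $\partial S_{i+1}$-arm on edges in $S_{i+1} \setminus \cD$ and the $\cD$-to-$V_n$ arm on edges in $\Z^d \setminus S_{i+1}$ -- produces
\[
\prob_{p_c}(F \setminus G \mid H(\cC)) \leq \prob_{p_c}(H(\cD)) \cdot \pi_{\text{out}} \cdot \prob_{p_c}\bigl(\partial_{out} \cD \sa{\Z^d \setminus [S_{i+1} \cup D_n]} V_n\bigr).
\]
Dividing by the lower bound $\prob_{p_c}(F \mid H(\cC)) \geq \prob_{p_c}(G \mid H(\cC)) = M_i^p(\cC, \cD) \cdot \prob_{p_c}(\partial_{out} \cD \sa{\Z^d \setminus [S_{i+1} \cup D_n]} V_n)$ from \eqref{eq:Gfact} and the already-established lower bound of \eqref{eq:Mlb}, the outer-arm factor cancels, and scale separation makes the remaining ratio $\leq C\, 2^{-d k_{i+1}}$, proving \eqref{eq:FtoG}.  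Beyond the BK--Reimer decomposition for \eqref{eq:Mslice} highlighted above, the main bookkeeping task is verifying at each step that the events combined via independence or BK are truly supported on disjoint edge sets, which is guaranteed throughout by the separation among $Ann_i^{q_i}$, $Ann_{i+1}^{q_j}$, $S_{i+1}$, and $\Z^d \setminus S_{i+1}$.
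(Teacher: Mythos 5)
The manipulations leading from \eqref{eq:Mslice} to \eqref{eq:Mlb} are fine, and your treatment of \eqref{eq:FtoG} (dropping the $\cC$--$\cD$ arm rather than re-invoking \eqref{eq:Mslice} as the paper does) is a legitimate variant that gives the same order of magnitude. The gap is in your proof of \eqref{eq:Mslice} itself. You assert a ``BK--Reimer-style product bound'' of the form
\[
\prob_{p_c}\bigl(\{\cC \sa{S_{i+1}} \cD\} \cap \{\cC \sa{\Z^d \setminus \cD} \partial S_{i+1}\} \,\bigm|\, H(\cC), H(\cD)\bigr) \leq C\, \pi_{\text{in}} \cdot \pi_{\text{out}},
\]
but there is no such inequality. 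Both connection events are increasing, so FKG gives the opposite inequality, and the intersection is not a disjoint occurrence: a single open path from $\cC$ through $\cD$ that happens to pass near $\partial S_{i+1}$ witnesses both events simultaneously on shared edges. Your proposed fix, a union bound ``over the external neighbor of $\cC$ at which the two relevant open paths branch,'' does not address this. The two paths can share a long initial segment and only branch at a vertex $z$ deep inside $S_{i+1}$, not at the first step out of $\cC$; a correct BK decomposition at the branch point $z$ therefore requires a sum over all such $z$, and that sum is dominated by $z$ near $\partial S_{i+1}$, where $\prob(z \lra \partial S_{i+1}) = \Theta(1)$, destroying the estimate.

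The paper resolves this with a geometric dichotomy on the $\cC$--$\cD$ arm $\gamma_1$: either $\gamma_1 \subseteq B(2^{\ell_{i+1}-1})$, in which case the branch point $z$ (last intersection of $\gamma_1$ and the arm $\gamma_2$ to $\partial S_{i+1}$) is confined to $B(2^{\ell_{i+1}-1})$ and the one-arm bound \eqref{KN} gives $\prob(z \lra \partial S_{i+1}) \leq C 2^{-2\ell_{i+1}}$, with the sum over $z$ then controlled by the convolution estimate \eqref{eqn: convolution}; or $\gamma_1$ exits $B(2^{\ell_{i+1}-1})$, in which case $\gamma_1$ itself furnishes two edge-disjoint long arms (from $\partial_{out}\cC$ out to $\partial B(2^{\ell_{i+1}-1})$ and from $\partial B(2^{\ell_{i+1}-1})$ back to $\partial_{in}\cD$), yielding a two-fold BK bound of order $|\partial_{out}\cC||\partial_{in}\cD| 2^{-4\ell_{i+1}}$ with no need for $\gamma_2$ at all. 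Without this dichotomy, neither the factorization nor the branch-point sum closes, so your proof of \eqref{eq:Mslice} does not go through as written.
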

    \begin{proof}
    We begin by proving \eqref{eq:Mslice}, recalling that $S_{i+1} = B(2^{\ell_{i+1}})$. We note further that the events considered in \eqref{eq:Mslice} are cylinder events, so it suffices to prove this equation at $p = p_c$ by our assumption that $i < \beta(p)$. We consider an outcome $\omega$ in the event
    \begin{equation}
    \label{eq:twist}
   \left\{ \cC \sa{S_{i+1}} \cD \right\} \cap \left\{\cC \sa{\Z^d \setminus \cD} \partial S_{i+1}\right\} \cap H(\cC)\cap H(\cD), 
    \end{equation}
    choosing a self-avoiding open path $\gamma_1$ witnessing $\left\{ \cC \sa{S_{i+1}} \cD \right\}$ and a self-avoiding open path $\gamma_2$ witnessing  $\left\{\cC \sa{\Z^d \setminus \cD} \partial S_{i+1}\right\}$. We further ensure that $\gamma_1$ and $\gamma_2$ intersect $\cC$ at only one vertex and that $\gamma_1$ intersects $\cD$ at only one vertex, noting that $\gamma_2$ does not intersect $\cD$.

      Either $\gamma_1 \subseteq B(2^{\ell_{i+1} - 1})$ or $\gamma_1$ contains a vertex outside $B(2^{\ell_{i+1} - 1})$. In the former case, we set $z \in B(2^{\ell_{i+1} - 1})$ to be the last intersection of $\gamma_1$ and $\gamma_2$ in the ordering beginning at $\cC$. Appropriate segments of $\gamma_1$ and $\gamma_2$, along with appropriate open and closed bonds of $\cC$ and $\cD$ and a connection from $\cD$ to $V_n$, witness
    \begin{equation}
        \label{eq:firstz}
   H(\cC) \circ H(\cD) \circ  \left\{ \partial_{out} \cC \xleftrightarrow{\Z^d \setminus \edges(\cC)} z \right\}\circ\left\{ z \lra \partial S_{i+1}\right\}\circ\left\{ \partial_{in} \cD \xleftrightarrow{\Z^d \setminus \edges(\cD)} z \right\}\ ,
    \end{equation}
    and so in this case, $\omega$ lies in the event described in \eqref{eq:firstz}.

      In the other case, if $\gamma_1$ exits $B(2^{\ell_{i+1} - 1})$, then the segments of $\gamma_1$ from $\cC$ to its first exit from $B(2^{\ell_{i+1} - 1})$ and from its last entry into $B(2^{\ell_{i+1} - 1})$ to $\cD$, along with appropriate open and closed bonds of $\cC$ and $\cD$, witness
    \begin{equation}
        \label{eq:secondz}
   H(\cC) \circ H(\cD)\circ \left\{ \partial_{out} \cC \xleftrightarrow{\Z^d \setminus \edges(\cC)}\partial B\left(2^{\ell_{i+1} - 1}\right) \right\}\circ\left\{ \partial_{in} \cD \xleftrightarrow{\Z^d \setminus \edges(\cD)} \partial B\left(2^{\ell_{i+1} - 1}\right) \right\}\ .
    \end{equation}
    In this case, then, our generic outcome $\omega$ from the event \eqref{eq:twist} must lie in \eqref{eq:secondz}.

    Applying \eqref{eq:firstz} and \eqref{eq:secondz} and summing, we see the probability of the event from \eqref{eq:twist} is at most
    \begin{align}
         &\prob_{p_c}(H(\cC)) \prob_{p_c}(H(\cD))\nonumber\\
         \times \Bigg[&\sum_{z \in B\left(2^{\ell_{i+1}-1}\right)} \prob_{p_c} \left(\partial_{out} \cC \xleftrightarrow{\Z^d \setminus \edges(\cC)} z \right)\prob_{p_c}\left( z \lra \partial S_{i+1} \right)\prob_{p_c}\left( \partial_{in} \cD \xleftrightarrow{\Z^d \setminus \edges(\cD)} z \right) \label{eq:err1}\\
         + &\prob_{p_c}\left( \partial_{out} \cC \xleftrightarrow{\Z^d \setminus \edges(\cC)} \partial B\left(2^{\ell_{i+1} - 1}\right) \right)\prob_{p_c}\left( \partial_{in} \cD \xleftrightarrow{\Z^d \setminus \edges(\cD)} \partial B\left(2^{\ell_{i+1} - 1}\right) \right) \Bigg]\ . \label{eq:err2}
    \end{align}
    Taking a union bound, \eqref{eq:err1} is at most
   \begin{equation*}
        \sum_{\substack{z \in B\left(2^{\ell_{i+1}-1}\right) \\ w_1 \in \partial_{out}\cC  \\ w_2 \in \partial_{in}\cD }} \prob_{p_c}(w_1 \lra z) \prob_{p_c}(z \lra w_2) \prob_{p_c}(z \lra \partial S_{i+1})\leq C 2^{-2 \ell_{i+1}} \sum_{\substack{z \in \Z^d \\ w_1 \in \partial_{out}\cC  \\ w_2 \in \partial_{in}\cD }} |z - w_1|^{2-d} |z - w_2|^{2-d}  .
        \end{equation*}
        Summing over $z$ and using \eqref{eqn: convolution}, the last display is at most
         \begin{equation}
               C 2^{-2 \ell_{i+1}} \sum_{\substack{ w_1 \in \partial_{out}\cC  \\ w_2 \in \partial_{in}\cD }} |w_1 - w_2|^{4-d} \leq C 2^{-2 \ell_{i+1}} 2^{9 k_i^*/4} 2^{25 k_{i+1}/4 - d k_{i+1}} \leq C 2^{(7-3d) k_{i+1}}\ . \label{eq:firstloop}
               \end{equation}
        Here we also used the bounds on $|\partial_{out} \cC|$ and $|\partial_{in}\cD|$ from Definition~\ref{defin:Kgood}.

        We now bound \eqref{eq:err2}, applying \eqref{KN} and the same cardinality bounds as in \eqref{eq:firstloop} to see it is at most 
        \begin{equation}
            \label{eq:secondloop}
            C\left| \partial_{out} \cC \right| \left| \partial_{in} \cD \right| 2^{-4 \ell_{i+1}} \leq C 2^{(9/4)(k_i^* + k_{i+1}) - 4 \ell_{i+1}} \leq C 2^{(18/7 - 4d) k_{i+1}}\ .
        \end{equation}
        Pulling together \eqref{eq:firstloop} and \eqref{eq:secondloop}, we bound the probability of the event in \eqref{eq:twist} by
        \begin{equation}
            \label{eq:thirdloop}
            C 2^{(18/7 - 4d) k_{i+1}}\prob_{p_c}(H(\cC)) \prob_{p_c}(H(\cD))\ .
        \end{equation}
        Dividing both sides by $\prob_{p_c}(H(\cC))$ and comparing to the bound \eqref{eq:firstloop} proves \eqref{eq:Mslice}.

 We next prove \eqref{eq:Mlb}. We use a union bound, writing
    \begin{equation} \label{eq:Munion}
    \begin{split}
        M_i^p(\cC, \cD) &\geq \prob_p\left( \left. H(\cD) \cap \left\{ \cC \sa{S_{i+1}} \cD \right\}  \right| H(\cC)\right) - \prob_p\left( \left. H(\cD) \cap \left\{ \cC \sa{S_{i+1}} \cD \right\} \cap \left\{\cC\sa{\Z^d \setminus \cD} \partial S_{i+1}\right\} \right| H(\cC)\right)\\
        &= \prob_p(H(\cD)) \prob_p\left( \left. \left\{ \cC \sa{S_{i+1}} \cD \right\}  \right| H(\cC) \cap H(\cD) \right) - \prob_p\left( \left. H(\cD) \cap \left\{ \cC \sa{S_{i+1}} \cD \right\} \cap \left\{\cC\sa{\Z^d \setminus \cD} \partial S_{ i+1}\right\} \right| H(\cC)\right)\\
        &\geq c\prob_p(H(\cD)) \left[ |\partial_{out} \cC| |\partial_{in} \cD| 2^{(2-d) k_{i+1}} -  C 2^{ - 2d k_{i+1}}\right]
        \end{split}
    \end{equation}
    where in the last line, we used Lemma~\ref{lem:glue} and \eqref{eq:Mslice}. Equation \eqref{eq:Munion} shows the first inequality of \eqref{eq:Mlb}. The upper bound of \eqref{eq:Mlb} follows directly from Lemma~\ref{lem:glue}, and so we have proved \eqref{eq:Mlb}.
    
     Finally, we show \eqref{eq:FtoG}, noting that $G(\cC, \cD) \subseteq F(\cC, \cD)$. 
     By similar reasoning to the above, the event $[F(\cC, \cD) \setminus G(\cC, \cD)]\cap H(\cC)$ is contained in a subevent of the event from \eqref{eq:twist}, namely
     \[  \left\{\cD \xleftrightarrow{\Z^d \setminus [S_{i+1} \cup D_n]} V_n  \right\} \cap   \left\{ \cC \sa{S_{i+1}} \cD \right\} \cap \{\cC \sa{\Z^d \setminus \cD} \partial S_{i+1}\} \cap H(\cC)\cap H(\cD). \]
     Therefore, taking probabilities and using independence of disjoint regions, we see 
    \begin{equation}
     \label{eqn: F-ub0}
     \begin{split}
     &\prob_{p}\left(\left[F(\cC, \cD) \setminus G(\cC, \cD)\right] \cap H(\cC)\right)\\
     \leq &\prob_{p}\left( \left\{ \cC \sa{S_{i+1}} \cD \right\} \cap \{\cC \sa{\Z^d \setminus \cD} \partial S_{i+1}\} \cap H(\cC)\cap H(\cD)\right)\prob_{p}\left(\left. \cD \xleftrightarrow{\Z^d \setminus [S_{i+1} \cup D_n]} V_n \right| H(\cD) \right).
    \end{split}
     \end{equation}
     
     Applying \eqref{eq:Mslice} in \eqref{eqn: F-ub0}, we find
     \begin{equation}
     \label{eqn: F-ub}
     \begin{split}
     &\prob_{p}\left(\left[F(\cC, \cD) \setminus G(\cC, \cD)\right] \cap H(\cC)\right)\\
    \leq~&C 2^{ - 2d k_{i+1}}\prob_{p}(H(\cD))\prob_{p}(H(\cC))\mathbb{P}_{p}\left(\left. \cD \xleftrightarrow{\Z^d \setminus [S_{i+1} \cup D_n]} V_n \right| H(\cD) \right).
    \end{split}
     \end{equation}
      On the other hand, since $i < Q(n)$, we have the following lower bound for $\prob_{p}\left( F(\cC, \cD) \cap H(\cC)   \right)$: 
        \begin{align}
              &\prob_{p}\left( H(\cC)\cap H(\cD) \cap \left\{ \cC \xleftrightarrow{\Z^d \setminus [S_i \cup D_n]} \cD \right\} \cap \left\{ \cD \xleftrightarrow{\Z^d \setminus [S_{i+1} \cup D_n]} V_n  \right\} \right)\nonumber\\
            \geq~&\prob_{p}(H(\cC)) \prob_{p}(H(\cD))\prob_{p}\left(   \left. \cC \xleftrightarrow{S_{i+1} \setminus S_i} \cD\  \right| H(\cC) \cap H(\cD)\right) \prob_{p} \left(  \cD \sa{\Z^d \setminus [S_{i+1} \cup D_n]} V_n \mid H(\cD) \right)\nonumber\\
            \geq C&2^{(7/4 + 2 - d)k_{i+1} + (7/4) k_i^*}\prob_{p}(H(\cC)) \prob_{p}(H(\cD)) \prob_{p} \left(\left. \cD \xleftrightarrow{\Z^d \setminus [S_{i+1} \cup D_n]} V_n \ \right| H(\cD) \right)  \label{eq:useglue}\\
            \geq C &2^{(15/4 - d)k_{i+1}} \prob_{p}(H(\cC)) \prob_{p}(H(\cD)) \prob_{p} \left(\left. \cD \xleftrightarrow{\Z^d \setminus [S_{i+1} \cup D_n]} V_n \ \right| H(\cD) \right) \nonumber \ .
        \end{align}
        In \eqref{eq:useglue}, we used Lemma~\ref{lem:glue}.

        Combining this lower bound with the upper bound in \eqref{eqn: F-ub} and writing
        \begin{align*}
        \mathbb{P}_{p}(G(\cC,\cD)\mid H(\cC))&=\mathbb{P}_{p}(F(\cC,\cD)\mid H(\cC))-\mathbb{P}_{p}(F(\cC,\cD) \setminus G(\cC,\cD)\mid H(\cC))\\
&= \mathbb{P}_{p}(F(\cC,\cD)\mid H(\cC))\left(1-\frac{\mathbb{P}_{p}(F(\cC,\cD) \setminus G(\cC,\cD)\mid H(\cC))}{\mathbb{P}_{p}(F(\cC,\cD)\mid H(\cC))}\right),
        \end{align*}
        we obtain \eqref{eq:FtoG}.
\end{proof}

    A final useful fact is that the event $G(\cC)$ is in fact a disjoint union of the events $G(\cC, \cD)$; this is the content of the following lemma.
    \begin{lemma}  \label{lem:Gdisj}
        Fix an $i$ and a $K_2$-good spanning set $\cC \subseteq Ann_i$. If $\cD_1$ and $\cD_2$ are two distinct $K_2$-good spanning sets in $Ann_{i+1}$, then
        \[H(\cC) \cap G(\cC, \cD_1) \cap G(\cC, \cD_2) = \varnothing\ . \]
    \end{lemma}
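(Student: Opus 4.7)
The plan is to argue by contradiction. Assume some outcome $\omega$ lies in $H(\cC) \cap G(\cC, \cD_1) \cap G(\cC, \cD_2)$ with $\cD_1 \neq \cD_2$. My approach has two steps: first I establish that the vertex sets of $\cD_1$ and $\cD_2$ must be disjoint in $\omega$, using the minimality condition in the definition of a good spanning set; then I use the open connection from $\cC$ to $\cD_2$ off $S_i$, together with the spanning property of $\cD_2$, to construct an open path from $\cC$ to $\partial S_{i+1}$ avoiding $\cD_1$, contradicting the complement clause in the definition of $G(\cC, \cD_1)$.

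For disjointness, let $q_j \in \{1,\ldots,32d^4\}$ be the unique index for which $\cD_j$ spans $Ann_{i+1}^{q_j}$. If $q_1 = q_2$, then on $H(\cD_1) \cap H(\cD_2)$ both $\cD_j$ are distinct connected components of the open subgraph of the common annulus $Ann_{i+1}^{q_1}$, hence automatically vertex-disjoint. If $q_1 \neq q_2$, say $q_1 < q_2$, suppose for contradiction some vertex $v$ lies in $\cD_1 \cap \cD_2$. The equalities $\cD_j = \fC_{Ann_{i+1}^{q_j}}(v)$ forced by $H(\cD_j)$, combined with the inclusion $Ann_{i+1}^{q_1} \subseteq Ann_{i+1}^{q_2}$, imply that $\cD_1$ coincides as a subgraph with the connected component of $\cD_2 \cap Ann_{i+1}^{q_1}$ containing $v$. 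Since $\cD_1$ is itself a good spanning set and in particular satisfies items~\ref{it:good1}--\ref{it:good3} of Definition~\ref{defin:Kgood}, this component violates item~\ref{it:good4} of Definition~\ref{defin:Kgood} applied to $\cD_2$, contradicting the assumption that $\cD_2$ is a good spanning set.

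For the path step, fix an open self-avoiding path $\alpha$ in $\omega$ realizing $\{\cC \sa{\Z^d \setminus S_i} \cD_2\}$ (available from $G(\cC, \cD_2)$), and let $u$ denote the first vertex of $\alpha$ (traversed from $\cC$) lying in $\cD_1 \cup \cD_2$. By disjointness $u$ lies in exactly one of these, and by the symmetric roles of $\cD_1$ and $\cD_2$ I may assume $u \in \cD_2$ (the case $u \in \cD_1$ is handled identically with the roles swapped, producing a contradiction with $G(\cC, \cD_2)$ instead). The initial segment $\alpha'$ of $\alpha$ up to $u$ then lies in $\Z^d \setminus [S_i \cup \cD_1]$. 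I extend $\alpha'$ by an open path $\beta \subseteq \cD_2$ from $u$ to some vertex $w \in \partial_{out} \cD_2$, which exists because $\cD_2$ is a connected spanning cluster of $Ann_{i+1}^{q_2}$ and which automatically avoids $\cD_1$ by disjointness. Because $\cC \subseteq Ann_i \subsetneq S_{i+1}$ for our choice of scales, while $w \in \partial_{out} \cD_2 \subseteq \partial B(2^{k_{i+1}^* + q_2})$ lies strictly outside $S_{i+1}$ (using $k_{i+1}^* > \ell_{i+1}$), the concatenation $\alpha' \cdot \beta$ must cross $\partial S_{i+1}$. Truncating at the first such crossing produces an open path in $\Z^d \setminus \cD_1$ realizing $\{\cC \sa{\Z^d \setminus \cD_1} \partial S_{i+1}\}$, contradicting the complement clause of $G(\cC, \cD_1)$.

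The main subtlety is the disjointness step in the regime $q_1 \neq q_2$, where the minimality condition item~\ref{it:good4} of Definition~\ref{defin:Kgood} plays the decisive role: without this condition, a good spanning set of a smaller sub-annulus could nest inside one of a larger sub-annulus and ruin both uniqueness and the path construction. The path-construction step itself is a purely topological argument that hinges only on the scale inequalities $k_i^* + 32 d^4 < \ell_{i+1} < k_{i+1}^*$ guaranteeing that $\cC$ lies strictly inside $S_{i+1}$ while $\partial_{out} \cD_j$ lies strictly outside.
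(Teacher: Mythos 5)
Your proof is correct and follows essentially the same two-step argument as the paper's: first use item~\ref{it:good4} of Definition~\ref{defin:Kgood} together with the nesting of the sub-annuli $Ann_{i+1}^q$ to force $\cD_1 \cap \cD_2 = \varnothing$, then route a path from $\cC$ through one of the $\cD_j$'s out to $\partial S_{i+1}$ while avoiding the other, contradicting the exclusion clause of $G(\cC, \cD_j)$. The only cosmetic difference is that you make the ``relabeling if necessary'' step explicit by tracking the first entry of the path into $\cD_1 \cup \cD_2$, whereas the paper compresses it.
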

    \begin{proof}
        We consider a hypothetical outcome $\omega$ of $H(\cC) \cap G(\cC, \cD_1) \cap G(\cC, \cD_2)$ and show $\omega$ has contradictory properties, thereby proving the lemma.
        We first note that by item~\ref{it:good4} of Definition~\ref{defin:Kgood}, the sets $\cD_1$ and $\cD_2$ must be themselves disjoint. Indeed, $\omega \in H(\cD_1)\cap H(\cD_2)$, and so both $\cD_1$ and $\cD_2$ are connected subgraphs of the open graph in annuli, say $Ann_{i+1}^{r_1}$ and $Ann_{i+1}^{r_2}$ respectively; since $\cD_1$ and $\cD_2$ are distinct, this implies $r_1 \neq r_2$. Assuming without loss of generality that $r_1 > r_2$, connectedness again implies $\cD_1 \supseteq \cD_2$, but this contradicts item~\ref{it:good4} of Definition~\ref{defin:Kgood}.

        We go on to prove the lemma. We consider an open path $\gamma$ in $\omega$ from $\cC$ to $\cD_1$, and (by relabeling if necessary) assume that $\gamma \cap \cD_2 = \varnothing$. We adjoin to $\gamma$ a path in $\cD_1$ from the endpoint of $\gamma$ to $\partial S_{i+1}$.  By the previous paragraph and the choice of $\gamma$, this path witnesses the event $ \{\cC \sa{\Z^d \setminus \cD_2} \partial S_{i+1}\}$, but this contradicts the fact that $\omega \in G(\cC, \cD_2)$. The lemma is proved.
    \end{proof}

  \section{Initial decomposition}
  \label{sec: initial}

        In what follows, we continue the above convention and let ``good spanning set'' be an abbreviation for ``$K_2$-good spanning set''. For any $i$ and good spanning sets $\cC \subseteq Ann_i$ and $\cD \subseteq Ann_{i+1}$, recall the event $F(\cC, \cD)$ and its counterpart $F(\cC)$ defined at \eqref{eq:Fdef}.
       We recall as well the convention that $\{0\}$, written shorthand as simply $0$, is a good spanning set. 

        We recall that to prove Theorem~\ref{thm:main}, we aim to show the estimate \eqref{eq:firstsimp}; for Theorem~\ref{thm:main2}, we show a similar bound. We continue to fix a single but arbitrary cylinder event $E$; in our estimates below, some constants will depend on the particular choice of $E$ (or essentially equivalently, on the value of $L$).
        Given sequences of sets $(V_n)$ and $(D_n)$ as in the statement of Theorem~\ref{thm:main}, we decompose the probability $\prob_{p}(E, \, 0 \sa{\Z^d \setminus D_n} V_n)$ for large $n$ into a sequence of transitions between good spanning clusters. The decomposition will take the form of a large matrix product. Many of the factors in this matrix product consist of the matrices $M_i^p$ defined at \eqref{eq:Midef} and \eqref{eq:Midef2} above.

        We define the other terms now: for each $i \geq 1$, and for each good spanning set $\cC \subseteq Ann_i$, we set
        \[ \gamma_i^p(\cC, n) :=  \prob_p\left(\left. \left\{\cC \sa{\Z^d \setminus [S_{i} \cup D_n]} V_n \right\} \right| H(\cC) \right)\ .\]
         Furthermore, for each good spanning set $\cC \subseteq Ann_1$, we define
         \begin{align*}
             \widehat M_0^p(0, \cC) &:= \prob_p\left(H(\cC) \cap \left\{ 0 \sa{S_1} \cC \right\}\setminus \left\{0 \sa{\Z^d \setminus \cC} \partial S_{1}\right\}\right)\ .
         \end{align*}
         In this language, we can rephrase \eqref{eq:Gfact} as
          \begin{equation}
             \label{eq:Gfact2}
             \prob_p(G(\cC, \cD) \mid H(\cC)) =  M_i^p(\cC, \cD) \gamma_{i+1}(\cD, n)\ .
         \end{equation}
        
         Our main decomposition result is the following:
         \begin{lemma}
             \label{lem:matprod}
             There is a $c > 0$ possibly depending on $E$ but uniform in the choice of $k_1$, in $n \geq 1$, and in $p > p_c$ such that the following holds.
             Suppose $1 \leq j <  \min\{Q(n), \beta(p)\}$. Then
             \[  \prod_{i=1}^j\left[1 - \exp(-c k_{i})\right]\leq \frac{\prob_{p}\left(E, 0 \sa{\Z^d \setminus D_n} V_n\right)} {\sum_{\cC_1, \dots \cC_j}  M_0^{p}(0, \cC_1) M_1^{p}(\cC_1, \cC_2) \dots M_{j-1}^p(\cC_{j-1}, \cC_j)  \gamma_j^{p}(\cC_j, n)} \leq \prod_{i=1}^j\left[1 + \exp(-c k_{i})\right]\ \]
             and for the same $c > 0$,
             \[  \prod_{i=1}^j\left[1 - \exp(-c k_{i})\right]\leq \frac{\prob_{p}\left(0 \sa{\Z^d \setminus D_n} V_n\right)} {\sum_{\cC_1, \dots \cC_j}  \widehat M_0^{p}(0, \cC_1) M_1^{p}(\cC_1, \cC_2) \dots M_{j-1}^p(\cC_{j-1}, \cC_j)  \gamma_j^{p}(\cC_j, n)} \leq \prod_{i=1}^j\left[1 + \exp(-c k_{i})\right]\ \]
             where the sums are over good spanning sets $\cC_i \subseteq Ann_i$ for $1 \leq i \leq j$.
         \end{lemma}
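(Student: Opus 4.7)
The plan is induction on $j$. The heart of the argument is the factorization \eqref{eq:Gfact2}, which says $\prob_p(G(\cC, \cD) \mid H(\cC)) = M_i^p(\cC, \cD) \gamma_{i+1}^p(\cD, n)$. Combined with the disjointness of the events $G(\cC, \cD)$ from Lemma~\ref{lem:Gdisj}, summing this identity over good spanning sets $\cD \subseteq Ann_{i+1}$ yields the central identity
\[
\sum_{\cD} M_i^p(\cC, \cD) \gamma_{i+1}^p(\cD, n) = \prob_p(G(\cC) \mid H(\cC))\ .
\]
For $i \geq 1$ and any good spanning set $\cC \subseteq Ann_i$, I plan to bracket this quantity between $(1-\delta_i) \gamma_i^p(\cC, n)$ and $\gamma_i^p(\cC, n)$ with $\delta_i \leq \exp(-c k_{i+1})$. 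The upper bound is immediate from $G(\cC) \subseteq \{\cC \sa{\Z^d \setminus [S_i \cup D_n]} V_n\}$. For the lower bound, I will chain Lemma~\ref{lem:goodlikely} (which passes from the $\gamma$-event to $F(\cC)$, losing a factor $1 - \exp(-c k_{i+1})$) with \eqref{eq:FtoG} of Lemma~\ref{lem:noleak} (which, summed disjointly over good $\cD$, passes from $F(\cC)$ to $G(\cC)$ and loses a further factor $1 - C 2^{-d k_{i+1}}$).

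For the base case $i = 0$, $\cC = \{0\}$, the same argument applies with the cylinder event $E$ incorporated: since $E$ depends only on edges in $B(2^L) \subseteq S_1$, an independence argument analogous to the one supporting \eqref{eq:Gfact2} gives $\sum_{\cC_1} M_0^p(0, \cC_1) \gamma_1^p(\cC_1, n) = \prob_p(E \cap G(0))$, together with the corresponding identity $\sum_{\cC_1} \widehat M_0^p(0, \cC_1) \gamma_1^p(\cC_1, n) = \prob_p(G(0))$ for the $E$-free estimate. The upper bound $\prob_p(E \cap G(0)) \leq \prob_p(E, 0 \sa{\Z^d \setminus D_n} V_n)$ is immediate; the reverse uses the additive estimate
\[
\prob_p\!\left(\{0 \sa{\Z^d \setminus D_n} V_n\} \setminus G(0)\right) \leq \exp(-c k_1)\, \prob_p\!\left(0 \sa{\Z^d \setminus D_n} V_n\right),
\]
again from Lemmas~\ref{lem:goodlikely} and \ref{lem:noleak}, combined with the $E$-free estimate (which I establish first by the identical induction) to control $\prob_p(0 \sa{\Z^d \setminus D_n} V_n)$ by its own matrix product. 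Iterating the one-step relation $j-1$ times starting from the base case then compounds the per-step error factors into the stated $\prod_{i=1}^j(1 \pm \exp(-c k_i))$ bound.

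The main obstacle is the handling of $E$ in the base case: converting the additive error $\exp(-c k_1)\,\prob_p(0 \sa{\Z^d \setminus D_n} V_n)$ into a multiplicative bound on the ratio $\prob_p(E, 0 \sa{\Z^d \setminus D_n} V_n) / \sum M_0^p M_1^p \cdots \gamma_j^p$ requires a positive lower bound on $\prob_p(E \mid 0 \sa{\Z^d \setminus D_n} V_n)$ uniform in $n$ and $p \geq p_c$. This is the source of the stated $E$-dependence of the constant $c$ in the lemma. The induction itself and the second estimate of the lemma involve no such subtlety; given Lemmas~\ref{lem:goodlikely}, \ref{lem:noleak}, and \ref{lem:Gdisj}, the remainder is routine multiplicative compounding of one-step errors.
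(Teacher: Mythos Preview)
Your proposal is correct and follows essentially the same route as the paper. Both argue by induction on $j$, using the factorization \eqref{eq:Gfact2} together with Lemmas~\ref{lem:goodlikely}, \ref{lem:noleak}, and \ref{lem:Gdisj} to replace each $\gamma_i^p(\cC_i,n)$ by $\sum_{\cC_{i+1}} M_i^p(\cC_i,\cC_{i+1})\gamma_{i+1}^p(\cC_{i+1},n)$ at a multiplicative cost $1\pm\exp(-c k_{i+1})$; and both flag the $E$-dependence of $c$ as arising in the base step, where an additive error of size $\exp(-ck_1)\prob_p(0\sa{\Z^d\setminus D_n}V_n)$ must be absorbed multiplicatively (the paper does this at display \eqref{eq:oopsg} with the remark ``the constant $c$ here may depend on $E$'').
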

        We begin with the proof of Lemma~\ref{lem:matprod}, and afterwards derive an important consequence relevant for our main results.
        \begin{proof}
        We set $\mathcal{E}$ to be either $E$ or the sure event to cover the cases of both of the displays in the lemma statement.
        We begin with the first proper annulus, $Ann_1$, assuming $1 < \min\{Q(n), \beta(p)\}$. By Lemma~\ref{lem:goodlikely} above, we have
        \begin{equation}
        \label{eq:step0}
        \begin{split}
            0 \leq \prob_{p}\left(\mathcal{E}, 0 \sa{\Z^d \setminus D_n} V_n\right) - \prob_{p}\left(\mathcal{E},  F(0)\right) &\leq \prob_{p}\left( \left\{ 0 \sa{\Z^d \setminus D_n} V_n \right\} \setminus F(0)\right)\\
            &\leq \exp(-c k_1) \prob_{p}\left(0 \sa{\Z^d \setminus D_n} V_n \right)\ .
            \end{split}
        \end{equation}
       The equation \eqref{eq:step0} is the first step toward \eqref{eq:firstsimp} and therefore toward Theorem~\ref{thm:main}.

        Our next step is to replace $F(0)$ by $G(0)$ in  \eqref{eq:step0}.  We have
        \begin{align}
       \prob_{p}(\mathcal{E}, F(0)) &\leq \sum_{\cC} \prob_{p}(\mathcal{E}, F(0, \cC))\nonumber\\
       &\leq \sum_{\cC} \left[\prob_{p}(\mathcal{E},  G(0, \cC)) + \prob_{p}( F(0, \cC) \setminus G(0, \cC))  \right]\nonumber\\
       &\leq \sum_{\cC} (1 + \exp(-c k_1)) \prob_{p}(\mathcal{E},  G(0, \cC))\ . \label{eq:oopsg}
        \end{align}
         In the last step above, we used Lemma~\ref{lem:noleak}; the constant $c$ here may depend on $E$. 
        Similarly, Lemma~\ref{lem:Gdisj} above states that if $\cC_1$ and $\cC_2$ are distinct good spanning sets of $Ann_1$, then $G(0, \cC_1) \cap G(0, \cC_2) = \varnothing$. Thus
        \begin{align}
        \prob_{p}(\mathcal{E}, F(0)) &= \prob_{p}\left(\mathcal{E}, \, \bigcup_\cC F(0, \cC)\right)\nonumber\\
        &\geq \prob_{p}\left(\mathcal{E}, \, \bigcup_\cC G(0, \cC)\right)\nonumber\\
        &= \sum_{\cC} \prob_{p}\left(\mathcal{E}, \, G(0, \cC)\right)\ .\label{eq:gtele}
        \end{align}

        Pulling these and our earlier estimates together gives that
        \begin{equation}\label{eq:step0g}
           0 \leq \prob_{p}\left(\mathcal{E}, 0 \sa{\Z^d \setminus D_n} V_n\right) - \sum_{\cC} \prob_{p}\left(\mathcal{E}, \, G(0, \cC)\right) \leq \exp(-c k_1) \prob_{p}\left(\mathcal{E}, 0 \sa{\Z^d \setminus D_n} V_n\right)\ ,
        \end{equation} 
        where the sum is over good spanning sets $\cC \subseteq Ann_1$; this is the refinement of \eqref{eq:step0}.  
        
         We would like to decouple the portions of the events in \eqref{eq:step0g} in distinct spatial regions to allow further decomposition. As at \eqref{eq:Gfact}, we may use the independence of bonds in disjoint regions to rewrite $ \prob_p\left(\mathcal{E}, G(0, \cC)\right)$ as
        \begin{align*}
          \prob_p\left(\mathcal{E} \cap H(\cC) \cap \left\{ 0 \sa{S_1} \cC \right\}\setminus \left\{0 \sa{\Z^d \setminus \cC} \partial S_{1}\right\}\right) \prob_p\left(\left. \left\{\cC \sa{\Z^d \setminus [S_{1} \cup D_n]} V_n \right\}\ \right|\  H(\cC) \right)\ .
        \end{align*}
        The first factor above is exactly $M_0^p(0, \cC)$ if $\mathcal{E} = E$ and exactly $\widehat M_0^p(0, \cC)$ if $\mathcal{E}$ is the sure event. The second factor above is $\gamma_1^p(\cC, n)$.

        Our estimate at \eqref{eq:step0g} thus guarantees the existence of a $c > 0$ uniform in $k_1$ such that
       \begin{align}
       \label{eq:casej1}
           1 - \exp(-c k_1)\leq &\frac{\prob_{p_c}\left(E, 0 \sa{\Z^d \setminus D_n} V_n\right)} {\sum_{\cC}  M_0^{p_c}(0, \cC)  \gamma_1^{p_c}(\cC, n)} \leq 1 + \exp(-c k_1) \ ,
       \end{align}
       and 
       \[ 1 - \exp(-c k_1)\leq \frac{\prob_{p_c}\left(0 \sa{\Z^d \setminus D_n} V_n\right)} {\sum_{\cC}  \widehat M_0^{p_c}(0, \cC)  \gamma_1^{p_c}(\cC, n)} \leq 1 + \exp(-c k_1) \ , \]
       This proves the lemma in the case $j = 1$.

       The remaining cases of the lemma can be proved inductively. Assuming that the lemma holds for $j$, we prove it for $j + 1$. Taking values of $n$ and $p$ such that $j + 1 < \min\{Q(n), \beta(p)\}$, we consider the quantity
       \[ \gamma_j^{p_c}(\cC_j, n) = \prob_p\left( \left\{\cC_j \left. \xleftrightarrow{\Z^d \setminus [S_{j} \cup D_n]} V_n \right\}\  \right| H(\cC) \right)\ , \]
       and further decompose the probability on the right side. An essentially identical sequence of manipulations to those which produced \eqref{eq:oopsg} and \eqref{eq:gtele} show
       \begin{align*}
           \left|\gamma_j^{p_c}(\cC_j, n) - \sum_{\cC_{j+1}} \prob_p\left( G(\cC_j, \cC_{j+1}) \mid H(\cC)\right)\right| \leq 1 + \exp(-c k_{j+1})
       \end{align*}
       where $c$ may be chosen the same as in \eqref{eq:step0g}.

       Finally, expressing $\prob(G(\cC_j, \cC_{j+1}) \mid H(\cC_j))$ using \eqref{eq:Gfact2}, the last display may be written as
        \begin{align*}
           \left|\gamma_j^{p_c}(\cC_j, n) - \sum_{\cC_{j+1}}M_{j}^p(\cC_j, \cC_{j+1}) \gamma_{j+1}^p(\cC_{j+1}, n) \right| \leq 1 + \exp(-c k_{j+1})
       \end{align*}
        By the same reasoning as at \eqref{eq:casej1}, this leads to the claim of the lemma in case $j+1$, indeed with the same choice of $c$ as at \eqref{eq:casej1}. This completes the induction and hence the proof.
        \end{proof}

    The following lemma  will allow us to complete the proof of the main theorem, conditional on a hypothesis that we will verify in the next section.
    \begin{lemma} \label{lem:givenhopf} 
    Suppose that for each choice of $k_1$ satisfying \eqref{eq:k0choice} and each pair $\cC, \cD$ of ($K_2$-)good spanning sets in $Ann_1$, there exists a real number $\alpha(\cC, \cD)$ such that
        \begin{equation}
            \label{eq:matrixfrac}
            \lim_{j \to \infty} \max_{\cC_j} \left| \frac{\sum_{\cC_2, \ldots \cC_{j-1}} M_1^{p_c}(\cC, \cC_2) \dots M_{j-1}^{p_c}(\cC_{j-1}, \cC_j)}{\sum_{\cC_2, \ldots \cC_{j-1}} M_1^{p_c}(\cD, \cC_2) \dots M_{j-1}^{p_c}(\cC_{j-1}, \cC_j)} - \alpha(\cC, \cD)\right| = 0\ ,
        \end{equation} 
        where the maximum and sums are over good spanning sets $\cC_2 \subseteq Ann_2, \dots, \cC_j \subseteq Ann_j$ respectively. Under this assumption, the following facts hold.
        \begin{enumerate}
        \item \label{it:firstgiven} The limit in \eqref{eq:thm1}  exists. As noted at \eqref{eq:thm1},  Theorem~\ref{thm:main} follows. 
        \item \label{it:secondgiven} The limit in \eqref{eq:thm2} exists. As noted at \eqref{eq:thm2},  Theorem~\ref{thm:main2} follows. 
        \end{enumerate}
    \end{lemma}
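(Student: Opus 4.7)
The plan is to combine the matrix-product decomposition of Lemma~\ref{lem:matprod} with the ratio-limit hypothesis~\eqref{eq:matrixfrac} to identify a single common limit $L$ realized both by $\prob(E \mid 0 \sa{\Z^d \setminus D_n} V_n)$ as $n \to \infty$ and by $\prob_p(E \mid 0 \lra \infty)$ as $p \searrow p_c$.

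\textbf{Part~\ref{it:firstgiven}.} Fix a reference good spanning set $\cC^* \subseteq Ann_1$ and define
\[
T_j(\cC_1, \cC_j) := \sum_{\cC_2, \ldots, \cC_{j-1}} \prod_{i=1}^{j-1} M_i^{p_c}(\cC_{i-1}, \cC_i), \qquad W_j(\cC_1, n) := \sum_{\cC_j} T_j(\cC_1, \cC_j)\, \gamma_j^{p_c}(\cC_j, n).
\]
For $j < Q(n)$, Lemma~\ref{lem:matprod} at $p = p_c$ yields
\[
\prob_{p_c}\bigl(E \mid 0 \sa{\Z^d \setminus D_n} V_n\bigr) = \frac{\sum_{\cC_1} M_0^{p_c}(0, \cC_1)\, W_j(\cC_1, n)}{\sum_{\cC_1} \widehat M_0^{p_c}(0, \cC_1)\, W_j(\cC_1, n)}\bigl(1 + o_j(1)\bigr),
\]
with $o_j(1) \to 0$ as $j \to \infty$ uniformly in $n$. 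Writing $W_j(\cC_1, n)/W_j(\cC^*, n) = \sum_{\cC_j} [T_j(\cC_1, \cC_j)/T_j(\cC^*, \cC_j)]\, \phi_j^n(\cC_j)$ with $\phi_j^n(\cC_j) \propto T_j(\cC^*, \cC_j)\, \gamma_j^{p_c}(\cC_j, n)$ a probability measure on good spanning sets of $Ann_j$, the hypothesis~\eqref{eq:matrixfrac} gives uniform-in-$\cC_j$ convergence of the bracket to $\alpha(\cC_1, \cC^*)$, so $W_j(\cC_1, n)/W_j(\cC^*, n) \to \alpha(\cC_1, \cC^*)$ uniformly in $n$. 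Since good spanning sets in $Ann_1$ form a finite collection, after dividing numerator and denominator above by $W_j(\cC^*, n)$ the ratio converges as $j \to \infty$, uniformly in $n$, to
\[
L := \frac{\sum_{\cC_1} M_0^{p_c}(0, \cC_1)\, \alpha(\cC_1, \cC^*)}{\sum_{\cC_1} \widehat M_0^{p_c}(0, \cC_1)\, \alpha(\cC_1, \cC^*)}.
\]
Because $Q(n) \to \infty$ as $n \to \infty$, taking $j \to \infty$ before $n \to \infty$ establishes $\prob_{p_c}(E \mid 0 \sa{\Z^d \setminus D_n} V_n) \to L$; defining $\nu(E) := L$ gives \eqref{eq:thm1}.

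\textbf{Part~\ref{it:secondgiven}.} First I would establish a supercritical analog of Lemma~\ref{lem:matprod}: set $\gamma_j^p(\cC_j, \infty) := \prob_p(\cC_j \sa{\Z^d \setminus S_j} \infty \mid H(\cC_j))$ and repeat the proof of Lemma~\ref{lem:matprod}, replacing Lemma~\ref{lem:decoup} by its analog for $\{\cC \sa{} \infty\}$ (proved by a union bound over $\partial B(2^{k_j^* + 32d^4 + 1})$ combined with item~\ref{it:knversion} of Lemma~\ref{lem:glue}). The resulting estimate, valid for $1 \leq j < \beta(p)$, reads $\prob_p(E \mid 0 \lra \infty) = (N_j^p(\infty)/D_j^p(\infty))(1 + o_j(1))$, where the matrix products use $M_i^p$, $\widehat M_0^p$ and terminate with $\gamma_j^p(\cdot, \infty)$. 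Given $\delta > 0$, I would first fix $j_0$ (depending only on the critical data) so that $o_j(1) < \delta/3$ and $|T_j^{p_c}(\cC_1, \cC_j)/T_j^{p_c}(\cC^*, \cC_j) - \alpha(\cC_1, \cC^*)| < \delta/10$ uniformly in $\cC_j$ for $j \geq j_0$. With $j_0$ fixed, each $M_i^p(\cC, \cD)$ for $i \leq j_0 - 1$ (and $M_0^p, \widehat M_0^p$) is a ratio of polynomials in $p$ and the good spanning sets of each $Ann_i$ are finitely many, so $\sup_{\cC, \cD} |M_i^p - M_i^{p_c}| \to 0$ as $p \searrow p_c$. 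Consequently $T_{j_0}^p \to T_{j_0}^{p_c}$ uniformly on its finite support, and for $p$ close enough to $p_c$ (ensuring $\beta(p) > j_0$), the ratio $T_{j_0}^p(\cC_1, \cC_j)/T_{j_0}^p(\cC^*, \cC_j)$ is within $\delta$ of $\alpha(\cC_1, \cC^*)$ uniformly in $\cC_j$. Averaging against the nonnegative weights $\gamma_{j_0}^p(\cdot, \infty)$ exactly as in Part~\ref{it:firstgiven}, and using uniform convergence of $M_0^p, \widehat M_0^p$ to their critical counterparts, yields $|N_{j_0}^p(\infty)/D_{j_0}^p(\infty) - L| < \delta$ for $p$ close enough to $p_c$, proving $\prob_p(E \mid 0 \lra \infty) \to L = \nu(E)$.

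\textbf{Principal obstacle.} The chief subtlety is the transfer in Part~\ref{it:secondgiven}: the hypothesis~\eqref{eq:matrixfrac} pertains to the critical matrices, whereas the supercritical decomposition involves $M^p$ and the $p$-dependent termination $\gamma_j^p(\cdot, \infty)$. The argument sidesteps this by choosing $j$ first on the basis of the critical ratio-limit and only afterwards exploiting continuity of the finitely-indexed matrices $M_i^p$ in $p$. A secondary, more routine obstacle is verifying the supercritical analog of Lemma~\ref{lem:matprod}, which requires only reproving the decoupling estimate Lemma~\ref{lem:decoup} for the event $\{\cC \sa{} \infty\}$ along the lines indicated above.
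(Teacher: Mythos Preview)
Your Part~\ref{it:firstgiven} has a real gap in the error accounting. You claim Lemma~\ref{lem:matprod} gives the conditional probability as the matrix ratio times $(1+o_j(1))$ with $o_j(1)\to 0$ as $j\to\infty$. It does not: the bound there is $\prod_{i=1}^j[1\pm\exp(-ck_i)]$, and as $j\to\infty$ this product converges to a number that depends on $k_1$ and is \emph{not} equal to $1$. With $k_1$ held fixed (as it implicitly is in your argument, since your reference set $\cC^*$ and the limit $L$ you write down live in $Ann_1$), you can only conclude that $\limsup_n$ and $\liminf_n$ of the conditional probability both lie in an interval of width comparable to $\sum_i\exp(-ck_i)$ around $L$, not that they are equal. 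The paper closes this gap exactly by letting $k_1$ vary: given $\delta>0$ one first picks $k_1$ so large that the matprod product is within $e^{\pm\delta}$ of $1$, then picks $j$ for the ratio limit, and only then sends $n\to\infty$; since $\delta$ is arbitrary the $\limsup$ and $\liminf$ coincide. Your argument is salvageable along the same lines, but as written the $(1+o_j(1))$ assertion is false and the conclusion does not follow.

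For Part~\ref{it:secondgiven} your route is genuinely different from the paper's and heavier than necessary. Rather than proving a new supercritical matrix decomposition terminating in $\gamma_j^p(\cdot,\infty)$ (which, as you note, would require redoing Lemma~\ref{lem:decoup} for the event $\{\cC\lra\infty\}$), the paper simply specializes the already-established bounds \eqref{eq:upmat}--\eqref{eq:downmat} to the choice $V_n=\partial B(n)$, $D_n=\varnothing$, multiplies through by $\prob_p(0\lra\partial B(n))$, and takes $n\to\infty$ to pass from $\{0\lra\partial B(n)\}$ to $\{0\lra\infty\}$. This avoids any new decoupling lemma. Your approach would work once you supply that supercritical decoupling estimate, but the paper's trick of approximating $\{0\lra\infty\}$ by $\{0\lra\partial B(n)\}$ gets the result essentially for free from Part~\ref{it:firstgiven}.
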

    \begin{proof}
    We begin by proving item~\ref{it:firstgiven} of the lemma by showing \eqref{eq:thm1} for arbitrary fixed sequences $(V_n, D_n)$ as in the statement of Theorem~\ref{thm:main} and an arbitrary fixed cylinder event $E$.
        We let $\delta > 0$ be arbitrary; we choose $k_1$ satisfying \eqref{eq:k0choice} large enough that, for all $n$ and $p \geq p_c$ and all $j < \min\{Q(n), \beta(p)\}$,
        \begin{align}
             e^{-\delta} \leq \frac{\prob_{p}\left(E, 0 \sa{\Z^d \setminus D_n} V_n\right)}{\sum_{\cC_1, \dots \cC_j}  M_0^{p}(0, \cC_1) M_1^{p}(\cC_1, \cC_2) \dots M_{j-1}^p(\cC_{j-1}, \cC_j)  \gamma_j^{p}(\cC_j, n)} \leq e^\delta\label{eq:Earm}
        \end{align}
        and
        \begin{align}
            e^{-\delta} \leq  \frac{\prob_{p}\left(0 \sa{\Z^d \setminus D_n} V_n\right)} {\sum_{\cC_1, \dots \cC_j}  \widehat M_0^{p}(0, \cC_1) M_1^{p}(\cC_1, \cC_2) \dots M_{j-1}^p(\cC_{j-1}, \cC_j)  \gamma_j^{p}(\cC_j, n)} \leq e^{\delta}\ .
             \label{eq:justarm}
        \end{align}
        This is possible using Lemma~\ref{lem:matprod}.

        For this choice of $k_1$, we choose $j$ large enough and then $p_0$ satisfying $\beta(p_0) > j$ so that, for all good spanning clusters $\cC, \cD \subseteq Ann_1$ and all good spanning clusters $\cC_j \subseteq Ann_j$, for all $p \in [p_c, p_0)$,
        \begin{equation}\label{eqn: ratio-l}
            e^{-\delta} \alpha(\cC, \cD) \leq \frac{\sum_{\cC_2, \ldots \cC_{j-1}} M_1^p(\cC, \cC_2) \dots M_{j-1}^p(\cC_{j-1}, \cC_j)}{\sum_{\cC_2, \ldots \cC_{j-1}} M_1^p(\cD, \cC_2) \dots M_{j-1}^p(\cC_{j-1}, \cC_j)}  \leq e^\delta \alpha(\cC, \cD)\ .
        \end{equation}
        Here we used the assumption of the lemma and the fact that each entry of $M_j^p$ is the probability of a cylinder event, hence continuous in $p$.

        Using \eqref{eq:Earm} and \eqref{eq:justarm}, we fix $p \in [p_c, p_0)$ and write
        \begin{align*}
           \prob_p\left(E \left| 0 \sa{\Z^d \setminus D_n} V_n\right.\right) &= \frac{ \prob_p\left(E \cap \left\{ 0 \sa{\Z^d \setminus D_n} V_n\right\} \right)}{\prob_{p}\left(0 \sa{\Z^d \setminus D_n} V_n\right)}\\
           &\leq e^{2 \delta} \frac{\sum_{\cC_1, \dots \cC_j}  M_0^{p}(0, \cC_1) M_1^{p}(\cC_1, \cC_2) \dots M_{j-1}^p(\cC_{j-1}, \cC_j)  \gamma_j^{p}(\cC_j, n)}{\sum_{\cC_1, \dots \cC_j}  \widehat M_0^{p}(0, \cC_1) M_1^{p}(\cC_1, \cC_2) \dots M_{j-1}^p(\cC_{j-1}, \cC_j)  \gamma_j^{p}(\cC_j, n)} \\
           &= e^{2 \delta} \sum_{\cC_1} \left[\frac{  M_0^{p}(0, \cC_1) \sum_{\cC_2, \dots \cC_j}M_1^{p}(\cC_1, \cC_2) \dots M_{j-1}^p(\cC_{j-1}, \cC_j)  \gamma_j^{p}(\cC_j, n)}{\sum_{\cC_1'}  \widehat M_0^{p}(0, \cC_1') \sum_{\cC_2', \dots \cC_j'}M_1^{p}(\cC_1', \cC_2') \dots M_{j-1}^p(\cC_{j-1}', \cC_j')  \gamma_j^{p}(\cC_j', n)}\right] \\
           &\leq e^{3 \delta} \sum_{\cC_1} \left[\frac{  M_0^{p}(0, \cC_1) \sum_{\cC_2, \dots \cC_j}M_1^{p}(\cC_1, \cC_2) \dots M_{j-1}^p(\cC_{j-1}, \cC_j)  \gamma_j^{p}(\cC_j, n)}{\sum_{\cC_1'}  \widehat M_0^{p}(0, \cC_1') \alpha(\cC_1', \cC_1) \sum_{\cC_2', \dots \cC_j'}M_1^{p}(\cC_1, \cC_2') \dots M_{j-1}^p(\cC_{j-1}', \cC_j')  \gamma_j^{p}(\cC_j', n)}\right] \\
           &= e^{3 \delta} \sum_{\cC_1} \frac{  M_0^{p}(0, \cC_1)}{\sum_{\cC_1'}  \widehat M_0^{p}(0, \cC_1') \alpha(\cC_1', \cC_1)}\ .
        \end{align*}

        In the second-to-last step, we have used \eqref{eqn: ratio-l}. The right-hand side of the last display does not depend on $n$, and so we have
        \begin{equation}
            \label{eq:upmat}
            \text{for $p \in [p_c, p_0),$}\quad\limsup_{n \to \infty} \prob_p\left(E \left| 0 \sa{\Z^d \setminus D_n} V_n\right.\right) \leq e^{3 \delta} \sum_{\cC_1} \frac{  M_0^{p}(0, \cC_1)}{\sum_{\cC_1'}  \widehat M_0^{p}(0, \cC_1') \alpha(\cC_1', \cC_1)}\ .  
        \end{equation}
        A similar string of inequalities gives
        \begin{equation}
            \label{eq:downmat}
            \text{for $p \in [p_c, p_0),$}\quad\liminf_{n \to \infty} \prob_p\left(E \left| 0 \sa{\Z^d \setminus D_n} V_n\right.\right) \geq e^{-3 \delta} \sum_{\cC_1} \frac{  M_0^{p}(0, \cC_1)}{\sum_{\cC_1'}  \widehat M_0^{p}(0, \cC_1') \alpha(\cC_1', \cC_1)}\ .  
        \end{equation}
        Combining \eqref{eq:downmat} and \eqref{eq:upmat} at $p = p_c$ and using the fact that $\delta > 0$ was arbitrary shows \eqref{eq:thm1}, proving one of the claims of the lemma.

        To show item~\ref{it:secondgiven} of the lemma by proving \eqref{eq:thm2}, we again begin by letting $\delta > 0$ be arbitrary. We return to \eqref{eq:upmat} and \eqref{eq:downmat} for this choice of $\delta > 0$, recalling that $p_0$ and $j$ were selected as functions of $\delta$ and that $j < \beta(p_0)$.  These equations were proved for arbitrary sequences $(V_n, D_n)$ as in the statement of Theorem~\ref{thm:main}; we apply them in the particular case that  $V_n = \partial B(n)$ and $D_n = \varnothing$ for each $n$.

        Again using continuity of $p \mapsto M_0^{p}(0, \cC_1)$, we can (by decreasing $p_0$ if necessary) ensure that for all $p \in [p_c, p_0)$ and all good spanning clusters $\cC_1 \subseteq Ann_1$, we have
        \begin{equation}
            \label{eq:M1cont}
            e^{-\delta} M_0^{p_c}(0, \cC_1) \leq M_0^{p}(0, \cC_1) \leq e^{\delta} M_0^{p_c}(0, \cC_1)\ . 
        \end{equation}
        Applying the choice $V_n = \partial B(n)$ and $D_n = \varnothing$  with \eqref{eq:upmat} and \eqref{eq:downmat} shows that for any $p \in (p_c, p_0)$ and for all large $n$,
        \begin{align*}
            e^{-4 \delta} \prob_p(0 \lra \partial B(n))\sum_{\cC_1} \frac{  M_0^{p}(0, \cC_1)}{\sum_{\cC_1'}  \widehat M_0^{p}(0, \cC_1') \alpha(\cC_1', \cC_1)} &\leq \prob_p\left(E \cap \{0 \lra \partial B(n)\}\right)\\
            &\leq e^{4 \delta} \prob_p(0 \lra \partial B(n))\sum_{\cC_1} \frac{  M_0^{p}(0, \cC_1)}{\sum_{\cC_1'}  \widehat M_0^{p}(0, \cC_1') \alpha(\cC_1', \cC_1)}\ , 
        \end{align*} 
        and taking $n \to \infty$ yields
        \begin{align*}
            e^{-4 \delta} \sum_{\cC_1} \frac{  M_0^{p}(0, \cC_1)}{\sum_{\cC_1'}  \widehat M_0^{p}(0, \cC_1') \alpha(\cC_1', \cC_1)} &\leq \prob_p\left(E \mid 0 \lra \infty \right)\\
            &\leq e^{4 \delta} \sum_{\cC_1} \frac{  M_0^{p}(0, \cC_1)}{\sum_{\cC_1'}  \widehat M_0^{p}(0, \cC_1') \alpha(\cC_1', \cC_1)}\ .
        \end{align*} 

        Finally, taking $p \searrow p_c$ and using \eqref{eq:M1cont} yields
        \begin{align*}
            e^{-6 \delta} \sum_{\cC_1} \frac{  M_0^{p_c}(0, \cC_1)}{\sum_{\cC_1'}  \widehat M_0^{p_c}(0, \cC_1') \alpha(\cC_1', \cC_1)} &\leq\liminf_{p \searrow p_c} \prob_p\left(E \mid 0 \lra \infty \right) \leq \limsup_{p \searrow p_c} \prob_p\left(E \mid 0 \lra \infty \right)\\
            &\leq e^{6 \delta} \sum_{\cC_1} \frac{  M_0^{p_c}(0, \cC_1)}{\sum_{\cC_1'}  \widehat M_0^{p_c}(0, \cC_1') \alpha(\cC_1', \cC_1)}\ .
        \end{align*} 
        Since $\delta > 0$ was arbitrary, this shows \eqref{eq:thm2} and completes the proof of the lemma.
    \end{proof}

 \section{Kesten's Markov Chain argument}
 \label{sec: MC-kesten}
As already noted, Lemma~\ref{lem:givenhopf} will suffice to prove Theorems~\ref{thm:main} and \ref{thm:main2} once we have verified its hypothesis. This will be the main focus of this section. We follow in the spirit of the arguments of Kesten~\cite{K}, using a lemma of Hopf \cite{H} (Lemma~\ref{lem:hopflem} below). The inputs to Hopf's lemma differ substantially from Kesten's setting, where they follow fairly directly from the Russo-Seymour-Welsh theorem valid in two-dimensional percolation. Our setup and work in the earlier parts of this paper allows us to handle the differences between the two- and high-dimensional settings.

 In what follows, we recall the definition of the relative oscillation: if $f, g$ are positive functions on a finite index space $\mathcal{S}$, we set
 \[\mathrm{osc}(f,g) := \max_{s_1, s_2 \in \mathcal{S}} \left[\frac{f(s_1)}{g(s_1)} - \frac{f(s_2)}{g(s_2)} \right]\ . \]
\begin{lemma}[\cite{H}, Theorem 1] \label{lem:hopflem}
    Let $I$, $J$ be two finite index sets and suppose ${T(i, j)}_{i \in I, j \in J}$ is a nonnegative kernel, and for any real function $f$ on $J$, we write $Tf(i) := \sum_{j} T(i, j) f(j)$. Suppose $T$ obeys the following comparability condition:
    \begin{equation}\label{eq:kappasquare}
    \frac{T(i, j) T(i', j')}{T(i, j') T(i', j)} \leq \kappa^2 \quad \text{for all $i, j, i', j'$} 
    \end{equation}
    for some $\kappa > 1$. Then we have for any positive functions $f, g$ on $J$ that
    \[\mathrm{osc}(Tf, Tg) \leq \frac{\kappa - 1}{\kappa + 1} \mathrm{osc}(f, g)\ .  \]
\end{lemma}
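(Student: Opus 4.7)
The approach is the now-classical one for Hopf-type contractions of positive kernels: we reinterpret the oscillation of $Tf/Tg$ as the maximum difference of expectations with respect to two probability measures built from $T$, and then reduce the claimed inequality to a uniform bound on the total variation distance between these measures. Concretely, for each $i \in I$ define the probability measure $\mu_i$ on $J$ by $\mu_i(j) := T(i,j)g(j)/Tg(i)$, and set $\alpha(j) := f(j)/g(j)$. One immediately checks $Tf(i)/Tg(i) = \sum_j \mu_i(j)\alpha(j)$, and since $\mathrm{osc}(f,g) = \max_j \alpha(j) - \min_j \alpha(j)$, the elementary duality
\[\Bigl|\sum_j (\mu_i(j) - \mu_{i'}(j))\alpha(j)\Bigr| \leq \bigl(\max_j \alpha - \min_j \alpha\bigr) \cdot d_{TV}(\mu_i, \mu_{i'})\]
reduces the entire lemma to the statement $\max_{i, i' \in I} d_{TV}(\mu_i, \mu_{i'}) \leq (\kappa - 1)/(\kappa + 1)$.

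To prove this total variation estimate we use the dual formulation $d_{TV}(\mu_i, \mu_{i'}) = \sup_{A \subseteq J}(\mu_i(A) - \mu_{i'}(A))$ and fix an arbitrary $A \subseteq J$. Writing $a := \sum_{j \in A} T(i,j)g(j)$, $b := \sum_{j \notin A} T(i,j)g(j)$, and analogously $a', b'$ from the row $T(i', \cdot)$, a direct computation yields
\[\mu_i(A) - \mu_{i'}(A) = \frac{a}{a+b} - \frac{a'}{a'+b'} = \frac{a b' - a' b}{(a+b)(a'+b')}.\]
Applying the comparability hypothesis \eqref{eq:kappasquare} with $j \in A$, $j' \notin A$, multiplying by $g(j)g(j')$, and summing gives the crucial bilinear inequality $ab' \leq \kappa^2 a'b$, so that
\[\mu_i(A) - \mu_{i'}(A) \leq \frac{(\kappa^2 - 1)\, a' b}{(a+b)(a'+b')}.\]

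The remaining step is an elementary single-variable maximization. Setting $x := a/b$ and $y := a'/b'$, the right-hand side of the last display becomes $(\kappa^2 - 1)\, y/[(x+1)(y+1)]$, and the constraint $ab' \leq \kappa^2 a'b$ reads $x \leq \kappa^2 y$. Since the expression is strictly increasing in $x$, we set $x = \kappa^2 y$ and optimize over $y > 0$; the critical point occurs at $y = 1/\kappa$, and the corresponding maximum equals $(\kappa^2 - 1)/(\kappa + 1)^2 = (\kappa - 1)/(\kappa + 1)$. Taking the supremum over $A$ and then over $(i, i')$ completes the proof. We do not anticipate a substantive obstacle: the only real content is recognizing the reduction to total variation, after which the hypothesis \eqref{eq:kappasquare} is precisely what one needs to make the bilinear bound $ab' \leq \kappa^2 a'b$ valid for every partition $(A, A^c)$, and the rest is routine calculus.
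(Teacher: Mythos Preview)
The paper does not prove this lemma; it simply quotes it as Theorem~1 of Hopf~\cite{H}. So there is no ``paper's own proof'' to compare against, and your write-up stands or falls on its own.

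Your reduction to total variation is correct and is indeed the standard route. The slip is in the final optimization. After writing
\[
\mu_i(A)-\mu_{i'}(A)\le \frac{(\kappa^2-1)\,a'b}{(a+b)(a'+b')}=\frac{(\kappa^2-1)\,y}{(x+1)(y+1)},
\]
you assert that this is strictly increasing in $x$ and therefore set $x=\kappa^2 y$. But the displayed bound has $x$ only in the denominator, so it is strictly \emph{decreasing} in $x$; pushing $x$ to its upper limit would minimize it, not maximize it, and with only the lower constraint $x\ge y/\kappa^2$ the bound you wrote blows up past $(\kappa-1)/(\kappa+1)$. The reason your final number nonetheless comes out right is a coincidence: at $x=\kappa^2 y$ the crude bound and the exact expression happen to agree.

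The fix is simply to skip the intermediate inequality and work with the exact identity
\[
\mu_i(A)-\mu_{i'}(A)=\frac{ab'-a'b}{(a+b)(a'+b')}=\frac{x-y}{(x+1)(y+1)}.
\]
This \emph{is} increasing in $x$; the hypothesis \eqref{eq:kappasquare} gives $x\le\kappa^2 y$, so the maximum over $x$ is at $x=\kappa^2 y$, yielding $(\kappa^2-1)y/((\kappa^2 y+1)(y+1))$. Your computation of the critical point $y=1/\kappa$ and the value $(\kappa-1)/(\kappa+1)$ is then correct. With this one-line repair the argument is complete.
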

We will use a standard observation about oscillation: letting $T$ be a nonnegative kernel as above,
\begin{align*}
\max_{i} \frac{ Tf(i)}{Tg(i)} &= \max_i  \frac{\sum_j T(i, j)f(j)}{\sum_{j} T(i, j) g(j)} \leq \left[\max_j \frac{f(j)}{g(j)} \right]\max_i  \frac{\sum_j T(i, j) g(j)}{\sum_{j} T(i, j) g(j)} =  \max_j\left[ \frac{f(j)}{g(j)} \right]\ ,
\end{align*}
and similarly $\min_i (Tf(i) / Tg(i))$ is at least $\min_i (f(i) / g(i))$. As a consequence,
if $(T_m)_{m \geq 1}$ is a sequence of nonnegative kernels, we have
\begin{equation}
    \label{eq:osctoconv}
    \begin{gathered}
            \lim_{m \to \infty} \mathrm{osc}\left([T_m \dots T_1 f], [T_m \dots T_1 g]\right) \to 0 \\
            \text{implies there exists a $C$ such that}\\
            \lim_{m \to \infty}\max_i  \left| \frac{T_m \dots T_1 f(i)}{T_m \dots T_1 g(i)} - C  \right| = 0\ .
    \end{gathered}
\end{equation}

 \begin{proof}[Proof of Theorems~\ref{thm:main} and \ref{thm:main2}]
     As discussed above, the results will follow immediately once we have established the hypothesis \eqref{eq:matrixfrac} of Lemma~\ref{lem:givenhopf}. This hypothesis in turn follows from Lemma~\ref{lem:hopflem} using our estimates on good clusters from Lemma~\ref{lem:glue} above.

     Indeed, let $j \geq 1$. We will show there exists a constant $\kappa > 1$ playing the role of $\kappa$ from \eqref{eq:kappasquare} when $T$ is replaced by $M$. Formally, there is a $\kappa > 1$ such that for all choices of the initial scale $k_1$ satisfying \eqref{eq:k0choice},
     \begin{align*}
         \frac{M_j^{p_c}(\cC, \cD) M_j^{p_c}(\cC', \cD')}{M_j^{p_c}(\cC, \cD') M_j^{p_c}(\cC', \cD)} \leq \kappa^2 \quad \text{uniformly in $\cC, \cC', \cD, \cD'$,} 
     \end{align*}
     where $\cC, \cC'$ are generic good spanning sets in $Ann_{j}$ and $\cD, \cD'$ good spanning sets in $Ann_{j+1}$.
    Indeed, this immediately follows from the equation \eqref{eq:Mlb} of Lemma~\ref{lem:noleak}, letting $\kappa = C^2 / c^2$ for $c, C$ as in \eqref{eq:Mlb}.

    We use this and the Hopf lemma, Lemma~\ref{lem:hopflem}, to study the ratio appearing on the left of~\eqref{eq:matrixfrac}.
    We introduce the following notation for the following matrix product of $M$s, similar to the one appearing in the statement of Lemma~\ref{lem:givenhopf}:
    \begin{equation}
        \label{eq:kernelprod}
     M_{(i, j)}^p(\cC_i, \cC_j) := \sum_{\cC_{i+1}, \dots, \cC_{j-1}} M_i^p(\cC_i, \cC_{i+1}) \dots M_{j-1}^p(\cC_{j-1}, \cC_j)\ .
     \end{equation}
    In \eqref{eq:kernelprod}, the sum is over ($K_2$-)good spanning sets $\cC_{i+1} \subseteq Ann_{i+1}$, \dots, $\cC_{j-1} \subseteq Ann_{j-1}$.
    
    Lemma~\ref{lem:hopflem} shows that, uniformly in $\cC, \cC'$ good spanning sets in $Ann_1$,
    \begin{align*}
    \max_{\cC_k, \cC_k'} \left[\frac{M_{(1, k)}^{p_c}(\cC, \cC_k)}{M_{(1, k)}^{p_c}(\cC', \cC_k)} - \frac{M_{(1, k)}^{p_c}(\cC, \cC_k')}{M_{(1, k)}^{p_c}(\cC', \cC_k')} \right] \leq \left[\frac{\kappa - 1}{\kappa + 1}\right] \max_{\cC_{k-1}, \cC_{k-1}'} \left[\frac{M_{(1, k-1)}^{p_c}(\cC, \cC_{k-1})}{M_{(1, k-1)}^{p_c}(\cC', \cC_{k-1})} - \frac{M_{(1, k-1)}^{p_c}(\cC, \cC_{k-1}')}{M_{(1, k-1)}^{p_c}(\cC', \cC_{k-1}')} \right]\ ,
    \end{align*}
    where the maximum on the left-hand side is over good spanning sets in $Ann_k$, and the maximum on the right-hand side is over good spanning sets in $Ann_{k-1}$. Inducting yields
    \begin{equation}\label{eq:finalhopf}
    \begin{split}
    \max_{\cC_k, \cC_k'} \left[\frac{M_{(1, k)}^{p_c}(\cC, \cC_k)}{M_{(1, k)}^{p_c}(\cC', \cC_k)} - \frac{M_{(1, k)}^{p_c}(\cC, \cC_k')}{M_{(1, k)}^{p_c}(\cC', \cC_k')} \right] &\leq \left[\frac{\kappa - 1}{\kappa + 1}\right]^{k-2}\max_{\cC_{2}, \cC_{2}'} \left[\frac{M_1^{p_c}(\cC, \cC_{2})}{M_1^{p_c}(\cC', \cC_{2})} - \frac{M_1^{p_c}(\cC, \cC_2')}{M_1^{p_c}(\cC', \cC_2')} \right]\\
    &\leq C \left[\frac{\kappa - 1}{\kappa + 1}\right]^{k-2}
    \end{split}
    \end{equation}
    uniformly in $\cC, \cC'$ good spanning sets in $Ann_1$.
    In particular,
    \begin{equation}
        \label{eq:finalhopf2}
        \mathrm{osc}\left( M_{(1, k)}^{p_c}(\cC, \cdot),\  M_{(1, k)}^{p_c}(\cC', \cdot)  \right) \to 0 \quad \text{for each fixed pair $\cC, \cC'$.}
    \end{equation}

    Recalling the definition \eqref{eq:kernelprod} and using the observation \eqref{eq:osctoconv}, the oscillation result \eqref{eq:finalhopf2} implies the hypothesis \eqref{eq:matrixfrac} of Lemma~\ref{lem:givenhopf} holds.
   As already noted, this suffices to complete the proofs of Theorems~\ref{thm:main} and \ref{thm:main2}.
 \end{proof}

\bigskip
{\bf Acknowledgements.} The authors thank M.~Heydenreich and A.~Sapozhnikov for helpful comments and suggestions. The research of S.~C.~was supported by NSF grant DMS-2154564. The research of J.~H.~was supported by NSF grant DMS-1954257. The research of P.S. was supported by NSF grants DMS-2154090 and DMS-2238423.

 \end{document}